\newtheorem{thm}{Theorem}[section]
\newtheorem{prop}[thm]{Proposition}
\newtheorem{cor}[thm]{Corollary}
\newtheorem{lem}[thm]{Lemma}
\theoremstyle{definition}
\newtheorem{defn}[thm]{Definition}
\newcounter{labelflag} \setcounter{labelflag}{0}
\newcommand{\Label}[1]{
                       \ifnum\thelabelflag=1
                          \ifmmode
                             \makebox[0in][l]{\qquad\fbox{\rm#1}}
                          \else
                             \marginpar{\vspace{0.7\baselineskip}
                                        \hspace{-1.1\textwidth}
                                        \fbox{\rm#1}}
                          \fi
                       \fi
                       \label{#1} }
\newcommand{\be}{\begin{equation}}
\newcommand{\ee}{\end{equation}}
\newcommand{\R}{\mathbb{R}}
\newcommand{\N}{\mathbb{N}}
\def \calf {{  {\mathcal{F}} }}
\def \calftwo {{  {\mathcal{F}}_2  }}
\def \cala {{  {\mathcal{A}}  }}
\def \calb {{  {\mathcal{B}}  }}
\def \cald {{  {\mathcal{D}}  }}
\def \caln {{  {\mathcal{N}}  }}
\def \thonet {{  \theta_{1,t}  }}
\def \thtwot {{  \theta_{2,t}  }}
 \def  \ltwo {{L^2 (\R^n)}}
  \def  \hone {{H^1 (\R^n)}}
  \newcommand{\ii}{\int_{\R^n}}
\newcommand{\rh }{\rho ({\frac {|x|^2}{k^2}})}
\newcommand{\rhp }{\rho^\prime ({\frac {|x|^2}{k^2}})}
\begin{document}

\baselineskip=1.3\baselineskip

\begin{titlepage}
\title{\large  \bf \baselineskip=1.3\baselineskip 
 Existence  and Upper Semicontinuity of 
   Attractors
for Stochastic Equations with Deterministic Non-autonomous Terms  }
\vspace{10mm}

\author{ 
Bixiang Wang \vspace{6mm}\\
Department of Mathematics \vspace{1mm}\\
 New Mexico Institute of Mining and Technology  \vspace{1mm}\\ 
Socorro,  NM~87801, USA  \vspace{3mm}\\
Email: bwang@nmt.edu }
\date{}
\end{titlepage}

\maketitle

\medskip

\begin{abstract}  \baselineskip=1.3\baselineskip
We prove   the existence  and uniqueness
of tempered random attractors
for stochastic Reaction-Diffusion
equations  on unbounded domains
 with multiplicative noise
and deterministic  non-autonomous forcing.
We establish    the periodicity of
the  tempered attractors
when   the stochastic  equations 
are forced by periodic functions.
We further prove the upper semicontinuity 
of these attractors when the intensity of 
stochastic perturbations approaches zero. 
 \end{abstract}

{\bf Key words.}       Pullback  attractor;  periodic   attractor; 
random complete solution;   upper  \\ semicontinuity;
 unbounded domain.

 {\bf MSC 2000.} Primary 35B40.  Secondary 35B41, 37L30.

\section{Introduction}
\setcounter{equation}{0}
      
This paper is concerned with the existence and 
upper semicontinuity of tempered attractors 
for stochastic Reaction-Diffusion equations 
on unbounded domains with deterministic non-autonomous
forcing.  
Given $\tau \in \R$, 
consider    the  following  stochastic equation 
with multiplicative noise  
    defined  on $ (\tau, \infty) \times \R^n $: 
 \be
 \label{intreq}
 {\frac {\partial u}{\partial t}}
  +  \lambda u - \Delta u 
     =  f(x, u)    +  g(t, x )   +  \alpha u \circ   {\frac {d\omega}{dt}},
 \ee
where   $\lambda$  and $\alpha$
 are   positive   numbers,
$g \in L^2_{loc}(\R, \ltwo)$,  
$f$ is a smooth nonlinearity,  and 
$\omega$  is a  two-sided real-valued Wiener process on a probability space.
The  stochastic  equation \eqref{intreq} is understood in the sense
of Stratonovich integration.

 When the deterministic forcing 
 $g$ does not depend on time,
 we can define a random dynamical
 system for equation  \eqref{intreq}
 over a probability space.
 The probability space is responsible
 for the stochastic perturbations and
 can be considered  as a parametric space.
 The existence of random attractors 
 for systems over a single probability space
 has  been investigated
 by many experts in the literature,
 see e.g.,   \cite{arn1, bat1, bat2, car1, car2, 
   car3, car4, car5, chu1, chu2, cra1, cra2, 
    fla1, gar1, gar2, huang1, kloe1, schm1, wan2} 
    and the references therein.
    The reader  is also referred to
      \cite{arn1, dua1, dua2, lia1, moh1} 
    for  the existence  of random  invariant manifolds.
 In this paper,  we  study  
   random attractors of 
  equation \eqref{intreq}
  when the deterministic forcing $g$
  is   time dependent. 
  In this case,   we need to introduce two
  parametric spaces   to 
  describe   the dynamics of the equation:
    one is responsible
  for deterministic 
  forcing  and the other is responsible for 
  stochastic perturbations.  The existence  of
  random attractors for systems over two
  parametric spaces  have  been
  recently  established   in \cite{wan4},
  where the structures of  attractors
  are characterized by random complete solutions. 
  In  the sequel,   we   prove
      the stochastic    equation \eqref{intreq}
  has  a tempered random attractor
  in $\ltwo$  when $g$ is  a   general 
  function depending on time.
  We then show   the tempered attractor
  is actually   periodic in time  if $g$ is    time periodic.
  It is worth noticing   that   the 
  existence of periodic  random attractors 
  was   first   established in  
  \cite{dua3}  for  a model of 
      quasigeostrophic fluid. 
      
        A second  goal of the present paper is
        to prove   the upper semicontinuity 
        of  random attractors  for equation
        \eqref{intreq}   when   the intensity
        $\alpha$ of  noise approaches  zero.  
        This kind of continuity   for  attractors
        has been studied by   many 
        authors, see e.g., 
         \cite{carva1, carva2, hal1, hal2, hal3, tem1}   
         for deterministic  
        attractors, and \cite{car1, lv1, wan5}
        for random attractors
        without  deterministic non-autonomous forcing.
        We here want to  prove the upper
        semicontinuity  of random attractors
        of equation\eqref{intreq} when $g$ is time dependent.
        As far as the author is aware,  this paper  is the first one
        dealing with   continuity  of random attractors
        for stochastic equations  with 
        deterministic non-autonomous forcing.
        We will first  prove  an abstract 
        result and then establish the upper semicontinuity of
        tempered attractors for \eqref{intreq}
        with time dependent  $g$.
         
        Note that the stochastic equation 
        \eqref{intreq} is   defined on the entire
        space $\R^n$.
        Since Sobolev embeddings are not  compact
        on unbounded domains,  we have an extra
        difficulty to prove   the upper semicontinuity 
        of attractors in $\ltwo$.
        We will overcome this difficulty by using  uniform
        estimates on  the tails of functions in  random
        attractors  as in \cite{wan5}.  More precisely,     we prove   
        all functions belonging to  tempered attractors
          are uniformly
        small outside a bounded domain in $\R^n$
        for sufficiently small 
        noise intensity  $\alpha$.

        The outline of the paper is   as follows.
   In the next section, 
   we borrow  some  results  regarding pullback attractors
   for random dynamical systems 
   over two parametric spaces.
   In Section 3,   we prove an abstract result
   on the upper semicontinuity  of pullback attractors
   parametrized by some   variables.
   Section 4 is devoted to the existence of a 
   continuous cocycle in $\ltwo$ for the stochastic
   equation \eqref{intreq}, and Section 5
   contains all uniform estimates
   including those  on the tails of solutions. 
  We  finally  prove the existence and uniqueness
   of tempered attractors   for \eqref{intreq}
   in Section 6, and establish the upper
   semicontinuity of   the  attractors in
   Section 7.

\section{Preliminaries}
\setcounter{equation}{0}

 For the reader\rq{}s convenience, 
 in this section,  we recall  the theory of 
pullback   attractors    for random dynamical
systems  over  two parametric spaces.
All results presented here are not original and can
be found in \cite{wan4}. 
The reader is also referred  to  
  \cite{bat1, cra1, cra2, fla1, schm1} for   random 
attractors  over  one parametric
space, and to \cite{bab1, bal2, hal1, sel1, tem1}
for deterministic attractors.   

Hereafter,  we assume that 
  $(X, d)$   is  a complete
separable  metric space, 
$\Omega_1$ is a nonempty set,
and 
  $(\Omega_2, \calftwo, P)$
is  a probability space.
For every $t \in \R$,  let $\theta_{1,t}: \Omega_1 \to \Omega_1$
be a mapping.  We say $(\Omega_1,  \{\thonet\}_{t \in \R})$ is 
a parametric dynamical system   if  
 $\theta_{1, 0}  $ is the
identity on $\Omega_1$
and $\theta_{1,  s+t}  = \theta_{1,t,}
  \circ \theta_{1,s}  $ for all
$t, s \in \R$. 
Let
 $\theta_2 : \R \times \Omega_2 \to \Omega_2$
 be  a    $(\calb (\R) \times \calftwo, \calftwo)$-measurable mapping.
 We say 
$(\Omega_2, \calftwo, P,  \theta_2)$
is a parametric dynamical system if 
  $\theta_2(0,\cdot) $ is the
identity on $\Omega_2$, 
$\theta_2 (s+t,\cdot) = \theta_2 (t,\cdot) \circ \theta_2 (s,\cdot)$ for all
$t, s \in \R$, 
and $P \theta_2 (t,\cdot)  =P$
for all $t \in \R$.
For convenience,    we
    write 
$\theta_2 (t, \cdot)$ as $\thtwot$ for $t \in \R$, and  write
  $(\Omega_2, \calftwo, P,  \theta_2)$ as 
 $(\Omega_2, \calftwo, P,  \{\thtwot\}_{t \in \R})$.

 \begin{defn}
 Let 
$K: \Omega_1 \times \Omega_2 \to  2^X$  
be  a set-valued mapping.
 We say  $K$ 
   is   measurable
with respect to $\calftwo$
in $\Omega_2$
if  the value  $K(\omega_1, \omega_2)$
is a closed  nonempty subset  of $X$
for all $\omega_1 \in \Omega_1$ and $\omega_2 \in \Omega_2$,
 and  the mapping
$ \omega_2 \in  \Omega_2
 \to d(x, K(\omega_1, \omega_2) )$
is $(  \calftwo, \ \calb(\R) )$-measurable
for every  fixed $x \in X$ and $\omega_1 \in \Omega_1$.
If $K$ is  measurable  with respect to $\calftwo$
in $\Omega_2$,     we   also  say          the family 
$\{K(\omega_1, \omega_2): \omega_1 \in \Omega_1, \omega_2 \in \Omega_2 \}$
  is measurable
with respect to $\calftwo$
 in $\Omega_2$.
 \end{defn}

\begin{defn} \label{ds1}
 Let
$(\Omega_1,  \{\thonet\}_{t \in \R})$
and
$(\Omega_2, \calftwo, P,  \{\thtwot\}_{t \in \R})$
be parametric  dynamical systems.
A mapping $\Phi$: $ \R^+ \times \Omega_1 \times \Omega_2 \times X
\to X$ is called a continuous  cocycle on $X$
over $(\Omega_1,  \{\thonet\}_{t \in \R})$
and
$(\Omega_2, \calftwo, P,  \{\thtwot\}_{t \in \R})$
if   for all
  $\omega_1\in \Omega_1$,
  $\omega_2 \in   \Omega_2 $
  and    $t, \tau \in \R^+$,  the following conditions (i)-(iv)  are satisfied:
\begin{itemize}
\item [(i)]   $\Phi (\cdot, \omega_1, \cdot, \cdot): \R ^+ \times \Omega_2 \times X
\to X$ is
 $(\calb (\R^+)   \times \calftwo \times \calb (X), \
\calb(X))$-measurable;

\item[(ii)]    $\Phi(0, \omega_1, \omega_2, \cdot) $ is the identity on $X$;

\item[(iii)]    $\Phi(t+\tau, \omega_1, \omega_2, \cdot) =
 \Phi(t, \theta_{1,\tau} \omega_1,  \theta_{2,\tau} \omega_2, \cdot) 
 \circ \Phi(\tau, \omega_1, \omega_2, \cdot)$;

\item[(iv)]    $\Phi(t, \omega_1, \omega_2,  \cdot): X \to  X$ is continuous.
    \end{itemize}
    
    If,  in addition,  there exists  a
    positive number   $T $ such that
    for every $t\ge 0$, $\omega_1 \in \Omega_1$  and $\omega_2 \in \Omega_2$,
$$
\Phi(t, \theta_{1, T} \omega_1, \omega_2, \cdot)
= \Phi(t, \omega_1,  \omega_2, \cdot ),
$$
then $\Phi$ is called  
a  continuous periodic  cocycle  on $X$ with period $T$.
\end{defn}

 Throughout the rest of this section,
 we   assume that 
 $\Phi$  is   a continuous  cocycle on $X$
over $(\Omega_1,  \{\thonet\}_{t \in \R})$
and
$(\Omega_2, \calftwo, P,  \{\thtwot\}_{t \in \R})$.
We will  use  $\cald$ to denote
 a  collection  of  some families of  nonempty subsets of $X$:
\be
\label{defcald}
{\cald} = \{ D =\{ D(\omega_1, \omega_2 ) \subseteq X: \ 
D(\omega_1, \omega_2 ) \neq \emptyset,  \ 
  \omega_1 \in \Omega_1, \
  \omega_2 \in \Omega_2\} \}.
\ee
Two elements  $D_1$ and $D_2$  of  $\cald$
are said  to be equal if 
$D_1(\omega_1, \omega_2) =  D_2(\omega_1, \omega_2)$
for   all  $\omega_1 \in \Omega_1$   and
$\omega_2 \in \Omega_2$.

\begin{defn} 
\label{temset} 
Let
$D=\{D(\omega_1, \omega_2): \omega_1 \in \Omega_1, \omega_2 \in \Omega_2 \}$
be a family of    nonempty subsets of $X$. 
We say $D$ is tempered in $X$ 
with respect to $(\Omega_1,  \{\thonet\}_{t \in \R})$
and
$(\Omega_2, \calftwo, P,  \{\thtwot\}_{t \in \R})$
if  there exists $x_0 \in X$ such that for every  $c>0$,
$\omega_1 \in \Omega_1$ and $\omega_2 \in \Omega_2$,
$$
\lim_{t \to -\infty}
e^{c t} d (x_0, D(\thonet \omega_1, \thtwot \omega_2))
=0.
$$
\end{defn}

\begin{defn} 
\label{defepsneigh1}
A collection $\cald$ of some families 
of nonempty subsets of $X$
is said  to be   neighborhood closed if   for each
$D=\{D(\omega_1, \omega_2): 
\omega_1 \in \Omega_1, \omega_2 \in \Omega_2 \}
\in \cald$,   there exists a positive number
$\varepsilon$ depending on $D$ such that  the family
\be\label{defepsneigh2}
 \{ {B}(\omega_1, \omega_2) :
 {B}(\omega_1, \omega_2) \mbox{ is a  nonempty subset of }
 \caln_\varepsilon ( D (\omega_1, \omega_2) ),  \forall \
 \omega_1 \in \Omega_1,  \forall\  \omega_2
\in  \Omega_2\}
\ee
also belongs to $\cald$.
\end{defn}

\begin{defn}
\label{comporbit}
 Let $\cald$ be a collection of some families of
 nonempty  subsets of $X$. A mapping $\psi: \R \times \Omega_1 \times \Omega_2$
 $\to X$ is called a complete orbit of $\Phi$ if for every $\tau \in \R$, $t \ge 0$,
 $\omega_1 \in \Omega_1$ and $\omega_2 \in \Omega_2$,  the following holds:
\be
\label{comporbit1}
 \Phi (t, \theta_{1, \tau} \omega_1, \theta_{2, \tau} \omega_2,
  \psi (\tau, \omega_1, \omega_2) )
  = \psi (t + \tau, \omega_1, \omega_2 ).
\ee
 If, in  addition,    there exists $D=\{D(\omega_1, \omega_2): \omega_1 \in \Omega,
 \omega_2 \in \Omega_2 \}\in \cald$ such that
 $\psi(t, \omega_1, \omega_2)$ belongs to
 $D(\theta_{1,t} \omega_1, \theta_{2, t} \omega_2 )$
 for every  $t \in \R$, $\omega_1 \in \Omega_1$
 and $\omega_2 \in \Omega_2$, then $\psi$ is called a
 $\cald$-complete orbit of $\Phi$.
 \end{defn}

\begin{defn}
\label{defomlit}
Let $B=\{B(\omega_1, \omega_2): \omega_1 \in \Omega_1, \ \omega_2  \in \Omega_2\}$
be a family of nonempty subsets of $X$.
For every $\omega_1 \in \Omega_1$ and
$\omega_2 \in \Omega_2$,  let
\be\label{omegalimit}
\Omega (B, \omega_1, \omega_2)
= \bigcap_{\tau \ge 0}
\  \overline{ \bigcup_{t\ge \tau} \Phi(t, \theta_{1,-t} \omega_1,
 \theta_{2, -t} \omega_2, 
 B(\theta_{1,-t} \omega_1, \theta_{2,-t}\omega_2  ))}.
\ee
Then
the  family
 $\{\Omega (B, \omega_1, \omega_2): \omega_1
  \in \Omega_1, \omega_2 \in \Omega_2 \}$
 is called the $\Omega$-limit set of $B$
 and is denoted by $\Omega(B)$.
 \end{defn}

\begin{defn}
Let $\cald$ be a collection of some families
 of nonempty subsets of $X$ and
$K=\{K(\omega_1, \omega_2): \omega_1 
\in \Omega_1, \ \omega_2  \in \Omega_2\} \in \mathcal{D}$. Then
$K$  is called a  $\cald$-pullback
 absorbing
set for   $\Phi$   if
for all $\omega_1 \in \Omega_1$,
$\omega_2 \in \Omega_2 $
and  for every $B \in \cald$,
 there exists $T= T(B, \omega_1, \omega_2)>0$ such
that
\be
\label{abs1}
\Phi(t, \theta_{1,-t} \omega_1, \theta_{2, -t} \omega_2, 
B(\theta_{1,-t} \omega_1, \theta_{2,-t} \omega_2  )) 
 \subseteq  K(\omega_1, \omega_2)
\quad \mbox{for all} \ t \ge T.
\ee
If, in addition, for all $\omega_1 \in \Omega_1$ 
and $\omega_2 \in \Omega_2$,
   $K(\omega_1, \omega_2)$ is a closed 
   nonempty subset of $X$
   and $K$ is measurable with respect
    to the $P$-completion of $\calftwo$
   in $\Omega_2$,
 then we say $K$ is a  closed measurable
  $\cald$-pullback absorbing  set for $\Phi$.
\end{defn}

\begin{defn}
\label{asycomp}
 Let $\cald$ be a collection of  some families of  nonempty
 subsets of $X$.
 Then
$\Phi$ is said to be  $\cald$-pullback asymptotically
compact in $X$ if
for all $\omega_1 \in \Omega_1$ and
$\omega_2 \in \Omega_2$,    the sequence
\be
\label{asycomp1}
\{\Phi(t_n, \theta_{1, -t_n} \omega_1, \theta_{2, -t_n} \omega_2,
x_n)\}_{n=1}^\infty \mbox{  has a convergent  subsequence  in }   X
\ee
 whenever
  $t_n \to \infty$, and $ x_n\in   B(\theta_{1, -t_n}\omega_1,
  \theta_{2, -t_n} \omega_2 )$   with
$\{B(\omega_1, \omega_2): \omega_1 \in \Omega_1, \ \omega_2 \in \Omega_2
\}   \in \mathcal{D}$.
\end{defn}

\begin{defn}
\label{defatt}
 Let $\cald$ be a collection of some families of
 nonempty  subsets of $X$
 and
 $\cala = \{\cala (\omega_1, \omega_2): \omega_1 \in \Omega_1,
  \omega_2 \in \Omega_2 \} \in \cald $.
Then     $\cala$
is called a    $\cald$-pullback    attractor  for
  $\Phi$
if the following  conditions (i)-(iii) are  fulfilled:
\begin{itemize}
\item [(i)]   $\cala$ is measurable
with respect to the $P$-completion of $\calftwo$ in $\Omega_2$ and
 $\cala(\omega_1, \omega_2)$ is compact for all $\omega_1 \in \Omega_1$
and    $\omega_2 \in \Omega_2$.

\item[(ii)]   $\cala$  is invariant, that is,
for every $\omega_1 \in \Omega_1$ and
 $\omega_2 \in \Omega_2$,
$$ \Phi(t, \omega_1, \omega_2, \cala(\omega_1, \omega_2)   )
= \cala (\theta_{1,t} \omega_1, \theta_{2,t} \omega_2
), \ \  \forall \   t \ge 0.
$$

\item[(iii)]   $\cala  $
attracts  every  member   of   $\cald$,  that is, for every
 $B = \{B(\omega_1, \omega_2): \omega_1 \in \Omega_1, \omega_2 \in \Omega_2\}
 \in \cald$ and for every $\omega_1 \in \Omega_1$ and
 $\omega_2 \in \Omega_2$,
$$ \lim_{t \to  \infty} d (\Phi(t, \theta_{1,-t}\omega_1,
 \theta_{2,-t}\omega_2, B(\theta_{1,-t}\omega_1, 
 \theta_{2,-t}\omega_2) ) , \cala (\omega_1, \omega_2 ))=0.
$$
 \end{itemize}
 If, in addition, there exists $T>0$ such that
 $$
 \cala(\theta_{1, T} \omega_1, \omega_2) = \cala(\omega_1,    \omega_2 ),
 \quad \forall \  \omega_1 \in \Omega_1, \forall \
  \omega_2 \in \Omega_2,
 $$
 then we say $\cala$ is periodic with period $T$.
\end{defn}

  We borrow 
the  following result from \cite{wan4}  
regarding  the  
 existence and uniqueness of 
 $\cald$-pullback attractors.
 Similar results
for random    systems can be found in 
\cite{bat1,  cra2, fla1, schm1}.

\begin{prop}
\label{att}  
 Let $\cald$ be a   neighborhood closed  
 collection of some  families of   nonempty subsets of
$X$,  and $\Phi$  be a continuous   cocycle on $X$
over $(\Omega_1,  \{\thonet\}_{t \in \R})$
and
$(\Omega_2, \calftwo, P,  \{\thtwot\}_{t \in \R})$.
Then
$\Phi$ has a  $\cald$-pullback
attractor $\cala$  in $\cald$
if and only if
$\Phi$ is $\cald$-pullback asymptotically
compact in $X$ and $\Phi$ has a  closed
   measurable (w.r.t. the $P$-completion of $\calftwo$)
     $\cald$-pullback absorbing set
  $K$ in $\cald$.
  The $\cald$-pullback
attractor $\cala$   is unique   and is given  by,
for each $\omega_1  \in \Omega_1$   and
$\omega_2 \in \Omega_2$,
\be\label{attform1}
\cala (\omega_1, \omega_2)
=\Omega(K, \omega_1, \omega_2)
=\bigcup_{B \in \cald} \Omega(B, \omega_1, \omega_2)
\ee
\be\label{attform2}
 =\{\psi(0, \omega_1, \omega_2): \psi \mbox{ is a   }  \cald {\rm -}
 \mbox{complete orbit of } \Phi\} .
 \ee
  \end{prop}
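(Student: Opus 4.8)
The plan is to prove the two implications of the stated equivalence separately, and then to establish uniqueness together with the two representation formulas \eqref{attform1}--\eqref{attform2}. The substantive direction is sufficiency: assuming $\Phi$ is $\cald$-pullback asymptotically compact and possesses a closed measurable $\cald$-pullback absorbing set $K$, I would define the candidate attractor by $\cala(\omega_1,\omega_2) = \Omega(K,\omega_1,\omega_2)$ as in \eqref{omegalimit} and verify conditions (i)--(iii) of Definition \ref{defatt} together with the membership $\cala \in \cald$. The necessity direction and uniqueness are comparatively routine and will be handled at the end.

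First I would record the sequential characterization of the $\Omega$-limit set: $y \in \Omega(K,\omega_1,\omega_2)$ if and only if there exist sequences $t_n \to \infty$ and $x_n \in K(\theta_{1,-t_n}\omega_1, \theta_{2,-t_n}\omega_2)$ with $\Phi(t_n, \theta_{1,-t_n}\omega_1, \theta_{2,-t_n}\omega_2, x_n) \to y$. Using this together with $\cald$-pullback asymptotic compactness (Definition \ref{asycomp}), nonemptiness and compactness of $\cala(\omega_1,\omega_2)$ follow by a standard subsequence/diagonal argument, compactness also using that $K$ is absorbing so the relevant images eventually lie in a precompact set. Invariance (condition (ii)) comes from the cocycle identity (iii) of Definition \ref{ds1} together with continuity of $\Phi(t,\omega_1,\omega_2,\cdot)$: one pushes forward a sequence defining a point of $\cala(\omega_1,\omega_2)$ by $\Phi(t,\cdot)$ and reindexes to land in $\cala(\theta_{1,t}\omega_1,\theta_{2,t}\omega_2)$, with the reverse inclusion obtained similarly. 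Attraction (condition (iii)) I would prove by contradiction: if attraction of some $B \in \cald$ failed, one could extract $t_n \to \infty$ and $x_n \in B(\theta_{1,-t_n}\omega_1,\theta_{2,-t_n}\omega_2)$ whose images stay a fixed distance from $\cala(\omega_1,\omega_2)$; asymptotic compactness yields a convergent subsequence whose limit lies in $\Omega(B,\omega_1,\omega_2) \subseteq \Omega(K,\omega_1,\omega_2) = \cala(\omega_1,\omega_2)$, the inclusion holding because $K$ absorbs $B$.

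To see $\cala \in \cald$ I would use that $\cald$ is neighborhood closed: since $K \in \cald$ is closed and absorbs itself, the intersection in \eqref{omegalimit} forces $\cala(\omega_1,\omega_2) = \Omega(K,\omega_1,\omega_2) \subseteq K(\omega_1,\omega_2) \subseteq \caln_\varepsilon(K(\omega_1,\omega_2))$ for all $\omega_1,\omega_2$, so $\cala$ lies in the neighborhood family of $K$ and therefore belongs to $\cald$. The representation \eqref{attform1} follows from two inclusions ($K \in \cald$ gives $\supseteq$; $K$ absorbing each $B$ gives $\subseteq$), and the complete-orbit formula \eqref{attform2} is obtained by showing every point of $\cala(\omega_1,\omega_2)$ is the value at time $0$ of a $\cald$-complete orbit, constructed by a backward-extension argument: invariance supplies, for each $t>0$, a preimage in $\cala(\theta_{1,-t}\omega_1,\theta_{2,-t}\omega_2)$, and compactness with a diagonal argument assembles a complete orbit lying in $\cala$; conversely any $\cald$-complete orbit has its time-$0$ value in $\cala(\omega_1,\omega_2)$ by attraction.

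The main obstacle I anticipate is measurability. Condition (i) requires $\cala$ to be measurable with respect to the $P$-completion of $\calftwo$ in the variable $\omega_2$, with $\omega_1$ held fixed as a parameter, and this is where the probabilistic structure genuinely enters, the deterministic $\Omega$-limit arguments being insufficient. I would derive it from the measurability of $K$ together with the joint measurability of $\Phi$ in (i) of Definition \ref{ds1}, applying a measurable-selection / projection theorem to the set-valued map $\omega_2 \mapsto \Omega(K,\omega_1,\omega_2)$; it is precisely this device that forces passage to the $P$-completion of $\calftwo$. For necessity, given an attractor $\cala$ I would take $K$ to be a closed $\varepsilon$-neighborhood of $\cala$, which lies in $\cald$ by neighborhood closedness and is measurable and absorbing, while $\cald$-pullback asymptotic compactness follows from compactness of the fibers $\cala(\omega_1,\omega_2)$ together with the attraction property. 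Uniqueness is the usual two-sided argument: any two $\cald$-pullback attractors attract each other and are invariant, so each is contained in the other.
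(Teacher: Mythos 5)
The paper contains no proof of Proposition \ref{att} at all --- it is stated as a result borrowed from \cite{wan4} --- so the only meaningful comparison is with that reference, whose argument your sketch reproduces: realizing $\cala$ as the $\Omega$-limit set $\Omega(K)$ of the absorbing set, deriving nonemptiness, compactness, invariance and attraction from $\cald$-pullback asymptotic compactness and the cocycle property, obtaining $\cala \in \cald$ from $\Omega(K,\omega_1,\omega_2)\subseteq K(\omega_1,\omega_2)$ together with neighborhood closedness, building the complete-orbit representation by backward extension plus a diagonal argument, and handling measurability of $\omega_2 \mapsto \Omega(K,\omega_1,\omega_2)$ by a projection/measurable-selection theorem, which is precisely what forces the passage to the $P$-completion of $\calftwo$. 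Your outline is correct and takes essentially the same route as the cited proof, including correctly locating the measurability step as the only point where the argument goes beyond standard deterministic $\Omega$-limit set reasoning.
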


For 
the periodicity of   $\cald$-pullback attractors,  we have the following 
result \cite{wan4}.

\begin{prop}
\label{periodatt}
 Let   $\cald$   be  a  neighborhood closed 
     collection of some  families of   nonempty subsets of
$X$.
Suppose  $\Phi$   is  a continuous  periodic   cocycle
with period $T>0$  on $X$
over $(\Omega_1,  \{\thonet\}_{t \in \R})$
and
$(\Omega_2, \calftwo, P,  \{\thtwot\}_{t \in \R})$.
 Suppose further that  $\Phi$  has a $\cald$-pullback attractor
 $\cala \in \cald$.  Then $\cala$ is periodic with period  $T$
 if and only if 
$\Phi$  has a  closed
   measurable (w.r.t. the $P$-completion of $\calftwo$)
     $\cald$-pullback absorbing set
  $K \in \cald$ with $K$ being periodic with period $T$. 
\end{prop}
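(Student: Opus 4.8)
The plan is to deduce both implications from the variational description of the attractor in Proposition~\ref{att}: for \emph{any} closed measurable $\cald$-pullback absorbing set $K \in \cald$ one has the identity $\cala(\omega_1,\omega_2) = \Omega(K,\omega_1,\omega_2)$ from \eqref{attform1}, where $\Omega(K,\cdot,\cdot)$ is the $\Omega$-limit set defined in \eqref{omegalimit}. Thus the periodicity of $\cala$ reduces to a statement about how the $\Omega$-limit construction interacts with the shift $\theta_{1,T}$.

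For sufficiency (periodic $K$ $\Rightarrow$ periodic $\cala$), I would fix $\omega_1,\omega_2$ and rewrite $\Omega(K,\theta_{1,T}\omega_1,\omega_2)$ using \eqref{omegalimit}. The group law for $\{\thonet\}$ gives $\theta_{1,-t}\theta_{1,T} = \theta_{1,T-t} = \theta_{1,T}\theta_{1,-t}$, so each building block of $\Omega(K,\theta_{1,T}\omega_1,\omega_2)$ takes the form
\[
 \Phi\big(t,\theta_{1,T}\theta_{1,-t}\omega_1,\theta_{2,-t}\omega_2,\ K(\theta_{1,T}\theta_{1,-t}\omega_1,\theta_{2,-t}\omega_2)\big).
\]
The periodicity of $K$ removes the $\theta_{1,T}$ from its argument, while the periodicity of the cocycle $\Phi$ (Definition~\ref{ds1}) removes the $\theta_{1,T}$ from the first slot of $\Phi$; hence this block equals $\Phi(t,\theta_{1,-t}\omega_1,\theta_{2,-t}\omega_2, K(\theta_{1,-t}\omega_1,\theta_{2,-t}\omega_2))$, the corresponding block of $\Omega(K,\omega_1,\omega_2)$. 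Taking closures, the union over $t\ge\tau$ and the intersection over $\tau\ge 0$ then yields $\Omega(K,\theta_{1,T}\omega_1,\omega_2)=\Omega(K,\omega_1,\omega_2)$, i.e.\ $\cala(\theta_{1,T}\omega_1,\omega_2)=\cala(\omega_1,\omega_2)$.

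For necessity (periodic $\cala$ $\Rightarrow$ existence of a periodic absorbing set), I would build $K$ by thickening the attractor. Since $\cala\in\cald$ and $\cald$ is neighborhood closed, there is $\varepsilon>0$ such that any family of nonempty subsets of $\caln_\varepsilon(\cala(\omega_1,\omega_2))$ lies in $\cald$; I then set
\[
 K(\omega_1,\omega_2)=\{x\in X:\ d(x,\cala(\omega_1,\omega_2))\le \tfrac12\varepsilon\}.
\]
Because $\tfrac12\varepsilon<\varepsilon$, $K(\omega_1,\omega_2)$ is a closed nonempty subset of $\caln_\varepsilon(\cala(\omega_1,\omega_2))$, so $K\in\cald$; its periodicity $K(\theta_{1,T}\omega_1,\omega_2)=K(\omega_1,\omega_2)$ is immediate from that of $\cala$; and its measurability with respect to the $P$-completion of $\calftwo$ is inherited from that of $\cala$ (the map $(\omega_2,x)\mapsto d(x,\cala(\omega_1,\omega_2))$ is measurable in $\omega_2$ and continuous in $x$, so its closed sublevel set $K(\omega_1,\cdot)$ is a measurable closed-valued multifunction). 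Finally, $K$ absorbs: by the attraction property (iii) of Definition~\ref{defatt}, the Hausdorff semidistance $d(\Phi(t,\theta_{1,-t}\omega_1,\theta_{2,-t}\omega_2, B(\theta_{1,-t}\omega_1,\theta_{2,-t}\omega_2)),\cala(\omega_1,\omega_2))$ tends to $0$ as $t\to\infty$ for each $B\in\cald$, hence is $<\tfrac12\varepsilon$ for all $t\ge T(B,\omega_1,\omega_2)$, which is exactly the inclusion \eqref{abs1} for this $K$.

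I expect the necessity direction to be the main obstacle, not because any single step is deep but because the thickened set must simultaneously satisfy four constraints — membership in $\cald$, closedness and measurability, periodicity, and absorption — and these pull in slightly different directions. The genuinely delicate choice is the radius: it must be taken strictly below the neighborhood-closedness threshold $\varepsilon$ so that $K\in\cald$, while the attraction in (iii) is only an asymptotic Hausdorff-semidistance statement, so the same cushion is what converts attraction into the finite-time absorption \eqref{abs1}. The sufficiency direction, by contrast, is a purely formal manipulation of \eqref{omegalimit} once the two periodicity identities are invoked in the correct order.
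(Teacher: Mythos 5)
Your proof is correct, and there is nothing in the paper to compare it against line by line: the paper states Proposition~\ref{periodatt} without proof, quoting it from \cite{wan4}. Your route is the natural one given the toolkit the paper sets up, and it is almost certainly the intended one: sufficiency by applying the representation $\cala(\omega_1,\omega_2)=\Omega(K,\omega_1,\omega_2)$ of \eqref{attform1} to the given periodic absorbing set and pushing $\theta_{1,T}$ through \eqref{omegalimit} via $\theta_{1,-t}\theta_{1,T}=\theta_{1,T}\theta_{1,-t}$ plus the two periodicity identities; necessity by thickening $\cala$ with a radius strictly below the neighborhood-closedness threshold, so that membership in $\cald$ and the conversion of asymptotic attraction into finite-time absorption \eqref{abs1} are secured simultaneously. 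One implicit step worth making explicit in the sufficiency half: to apply \eqref{attform1} to your particular $K$ you also need $\Phi$ to be $\cald$-pullback asymptotically compact, which follows from the ``only if'' half of Proposition~\ref{att} since the attractor is assumed to exist; with that, uniqueness and the $K$-independent middle term $\bigcup_{B\in\cald}\Omega(B,\omega_1,\omega_2)$ justify your claim that the identity holds for \emph{any} closed measurable absorbing set.

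The one place where you are too quick is the measurability of $K(\omega_1,\omega_2)=\{x\in X:\ d(x,\cala(\omega_1,\omega_2))\le \tfrac12\varepsilon\}$. It is not ``inherited'' through a pointwise formula: in a general metric space $d(x,K(\omega_1,\omega_2))\neq\max\{0,\ d(x,\cala(\omega_1,\omega_2))-\tfrac12\varepsilon\}$ (take $X=\{0,2\}\subset\R$ with the induced metric, $\cala=\{0\}$, radius $1$, $x=2$: the left side is $2$, the right side is $1$), so measurability of $\omega_2\mapsto d(x,K(\omega_1,\omega_2))$ does not follow directly from that of $\omega_2\mapsto d(x,\cala(\omega_1,\omega_2))$. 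The clean argument: $(\omega_2,y)\mapsto d(y,\cala(\omega_1,\omega_2))$ is measurable in $\omega_2$ and $1$-Lipschitz in $y$, hence jointly measurable because $X$ is separable; then
\[
\{\omega_2:\ d(x,K(\omega_1,\omega_2))<c\}
=\mathrm{proj}_{\Omega_2}\bigl\{(\omega_2,y):\ d(x,y)<c,\ d(y,\cala(\omega_1,\omega_2))\le \tfrac12\varepsilon\bigr\},
\]
and the measurable projection theorem (valid for Polish $X$ over a completed probability space) places this set in the $P$-completion of $\calftwo$. This is precisely why the proposition demands measurability only with respect to the completion; with this argument supplied, your thickened set meets all four requirements and the proof is complete.
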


Note that  a family  $K=\{K(\omega_1,
\omega_2): \omega_1 \in \Omega_1,
\omega_2 \in \Omega_2 \} \in \cald$
  is periodic with period $T$ 
if $K(\theta_{1, T} \omega_1, \omega_2)
=K( \omega_1, \omega_2)$
for all $ \omega_1 \in \Omega_1$
and  $\omega_2 \in \Omega_2$.
In the next section,  we  discuss the
continuity of pullback attractors when
a parameter varies.

\section{Upper Semicontinuity of Random Attractors}
\setcounter{equation}{0}

In this section, we discuss  the upper semicontinuity of
pullback   attractors  of a family of cocycles  
 on  a Banach space $X$.
 Suppose $\Lambda$  is   a  metric space.
 Given $\lambda\in \Lambda$,  let  $\Phi_\lambda$
 be  a continuous  cocycle  on $X$ over 
 $(\Omega_1,  \{\thonet\}_{t \in \R})$
and  $(\Omega_2, \calftwo, P,  \{\thtwot\}_{t \in \R})$.
Suppose   there exists $\lambda_0 \in \Lambda$
such  that for  every $t \in \R^+$,
$\omega_1 \in \Omega_1$,
 $\omega_2 \in \Omega_2$, 
$\lambda_n  \in \Lambda$ with
$\lambda_n \to \lambda_0$, and
$x_n$, $x \in X$ with $x_n \to x$, the following holds:
\be
 \label{sem1}
\lim_{n \to \infty} \Phi_{\lambda_n} (t, \omega_1, \omega_2,   x_n )
= \Phi_{\lambda_0} (t, \omega_1, \omega_2, x).
\ee
For   each 
$\lambda \in \Lambda$,
let $\cald_\lambda$ be a collection of 
 families of  nonempty subsets of $X$. 
Suppose there exists a map
$R_{\lambda_0}: \Omega_1 \times \Omega_2
\to \R$ such that  the family
\be
\label{sem1_1}
B=\{ 
B(\omega_1, \omega_2) =\{ x \in X: \| x \|_X 
\le R_{\lambda_0} (\omega_1, \omega_2) \}:
  \omega_1 \in \Omega_1, \omega_2 \in \Omega_2
\}
 \ \text{ belongs to } \  \cald_{\lambda_0}.
\ee
Suppose  further    that  for each 
$\lambda \in \Lambda$,   $\Phi_\lambda$ has a 
 $\cald_\lambda$-pullback  attractor
$\cala_\lambda  
 \in \cald_\lambda$ and a $\cald_\lambda$-pullback  absorbing
set $K_\lambda  
\in  \cald_\lambda   $  such that for all  $ \omega_1
\in \Omega_1$
and $ \omega_2 \in \Omega_2 $, 
\be 
\label{sem2}
\limsup_{\lambda  \to   \lambda_0}
\| K_\lambda (\omega_1, \omega_2) \|_X \le 
  R_{\lambda_0}  (\omega_1, \omega_2),
\ee
where  $R_{\lambda_0}$ is as in \eqref{sem1_1}
and $\| S \|_X =
\sup_{x \in  S} \| x \|_X$
for a subset $S$ of $X$.
We  finally  assume that   for every
$ \omega_1
\in \Omega_1$
and $ \omega_2 \in \Omega_2 $, 
\be
 \label{sem3}
\bigcup_{\lambda \in \Lambda } \cala_\lambda (\omega_1,
 \omega_2)
 \  \text{is precompact in } X.
\ee
 
 We now present     the 
 upper semicontinuity of
   $\cala_\lambda$  at
 $\lambda = \lambda_0$. 
  
\begin{thm}
\label{semcon}
Suppose \eqref{sem1}-\eqref{sem3} hold. Then for
every 
$\omega_1 \in \Omega_1$
and $\omega_2 \in \Omega_2$,
\be  \label{sem4}
 {\rm dist} (\cala_\lambda (\omega_1, \omega_2), 
 \cala_{\lambda_0} (\omega_1,  \omega_2 ))
\to 0,
\quad \text{as} \quad \lambda \to   \lambda_0.
\ee
\end{thm}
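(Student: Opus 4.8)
The plan is to argue by contradiction and to reduce the whole statement to the complete-orbit characterization \eqref{attform2} of the limit attractor $\cala_{\lambda_0}$. Fix $\omega_1 \in \Omega_1$ and $\omega_2 \in \Omega_2$ and suppose \eqref{sem4} fails. Then there exist $\delta>0$, a sequence $\lambda_n \to \lambda_0$, and points $a_n \in \cala_{\lambda_n}(\omega_1,\omega_2)$ with $d(a_n, \cala_{\lambda_0}(\omega_1,\omega_2)) \ge \delta$ for all $n$. By the precompactness hypothesis \eqref{sem3} the sequence $\{a_n\}$ lies in a precompact set, so along a subsequence $a_n \to a$ in $X$; continuity of $x \mapsto d(x,\cala_{\lambda_0}(\omega_1,\omega_2))$ then gives $d(a,\cala_{\lambda_0}(\omega_1,\omega_2)) \ge \delta$. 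Everything reduces to proving $a \in \cala_{\lambda_0}(\omega_1,\omega_2)$, which contradicts this inequality since $\cala_{\lambda_0}(\omega_1,\omega_2)$ is compact, hence closed.

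To establish the membership I would construct a $\cald_{\lambda_0}$-complete orbit of $\Phi_{\lambda_0}$ through $a$ and apply \eqref{attform2}. Because $a_n \in \cala_{\lambda_n}(\omega_1,\omega_2)$, the invariance of Definition \ref{defatt}(ii) combined with \eqref{attform2} furnishes, for each $n$, a $\cald_{\lambda_n}$-complete orbit $\psi_n$ of $\Phi_{\lambda_n}$ with $\psi_n(0,\omega_1,\omega_2)=a_n$ and $\psi_n(t,\omega_1,\omega_2) \in \cala_{\lambda_n}(\thonet\omega_1,\thtwot\omega_2)$ for all $t \in \R$. Fix a countable dense set $Q \subseteq \R$ with $0 \in Q$. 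For each fixed $s$, assumption \eqref{sem3} shows $\{\psi_n(s,\omega_1,\omega_2)\}_n$ is relatively compact, so a diagonal extraction over $Q$ produces a subsequence (not relabeled) along which $\psi_n(s,\omega_1,\omega_2)$ converges for every $s \in Q$; call the limit $\psi(s,\omega_1,\omega_2)$. For an arbitrary $t \in \R$ I pick $s \in Q$ with $s \le t$ and use the complete-orbit identity \eqref{comporbit1} for $\psi_n$ to write $\psi_n(t,\omega_1,\omega_2)=\Phi_{\lambda_n}(t-s,\theta_{1,s}\omega_1,\theta_{2,s}\omega_2,\psi_n(s,\omega_1,\omega_2))$; the convergence hypothesis \eqref{sem1}, applied with $\lambda_n \to \lambda_0$ and $\psi_n(s,\omega_1,\omega_2) \to \psi(s,\omega_1,\omega_2)$, forces the right-hand side to converge. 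Hence $\psi_n(t,\omega_1,\omega_2)$ converges for every real $t$, and I set $\psi(t,\omega_1,\omega_2)$ to be this limit; in particular $\psi(0,\omega_1,\omega_2)=a$.

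It then remains to verify that $\psi$ is a $\cald_{\lambda_0}$-complete orbit. Passing to the limit in $\psi_n(t+\tau,\omega_1,\omega_2)=\Phi_{\lambda_n}(t,\theta_{1,\tau}\omega_1,\theta_{2,\tau}\omega_2,\psi_n(\tau,\omega_1,\omega_2))$ by means of \eqref{sem1} yields exactly \eqref{comporbit1} for $\Phi_{\lambda_0}$, so $\psi$ is a complete orbit of $\Phi_{\lambda_0}$. For the $\cald_{\lambda_0}$-membership I use that each attractor sits inside its absorbing set: from invariance and absorption one gets $\cala_{\lambda_n}(\cdot)\subseteq K_{\lambda_n}(\cdot)$, so $\|\psi_n(t,\omega_1,\omega_2)\|_X \le \|K_{\lambda_n}(\thonet\omega_1,\thtwot\omega_2)\|_X$. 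Letting $n\to\infty$ and invoking \eqref{sem2} gives $\|\psi(t,\omega_1,\omega_2)\|_X \le R_{\lambda_0}(\thonet\omega_1,\thtwot\omega_2)$ for every $t$, so $\psi(t,\omega_1,\omega_2)\in B(\thonet\omega_1,\thtwot\omega_2)$ with $B$ the ball family of \eqref{sem1_1}. Since $B \in \cald_{\lambda_0}$ by \eqref{sem1_1}, the orbit $\psi$ is $\cald_{\lambda_0}$-complete, and \eqref{attform2} yields $a=\psi(0,\omega_1,\omega_2)\in\cala_{\lambda_0}(\omega_1,\omega_2)$, the desired contradiction.

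I expect the main obstacle to be the construction of the limit orbit on all of $\R$ along a single subsequence: pointwise precompactness from \eqref{sem3} only delivers convergence on the countable set $Q$, and the key device is re-expressing $\psi_n(t)$ through the cocycle based at a rational time $s\le t$, so that \eqref{sem1} upgrades dense convergence into convergence at every real $t$ and simultaneously lets one pass to the limit in the complete-orbit relation. A secondary technical point is justifying that a $\cald_\lambda$-complete orbit issued from a point of $\cala_\lambda$ remains in $\cala_\lambda$ (a consequence of invariance and \eqref{attform2}) and that $\cala_\lambda\subseteq K_\lambda$; this is precisely what allows \eqref{sem2} to control the limit orbit and place it in the $\lambda_0$-collection $\cald_{\lambda_0}$.
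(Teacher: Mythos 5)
Your overall strategy is the same as the paper's: argue by contradiction, use \eqref{sem3} to extract a limit $a$ of points $a_n\in\cala_{\lambda_n}(\omega_1,\omega_2)$, use invariance of $\cala_{\lambda_n}$ together with \eqref{sem3} to produce backward preimages, pass to the limit via \eqref{sem1}, control the limit points through $\cala_{\lambda_n}\subseteq K_{\lambda_n}$ and \eqref{sem2} so that they lie in the ball family $B$ of \eqref{sem1_1}, and finally conclude $a\in\cala_{\lambda_0}(\omega_1,\omega_2)$. The paper does this with \emph{independent} preimages $z_m$ at each time $t_m$ (no coherence between different $m$ is needed), whereas you build a single coherent backward orbit by diagonal extraction over a countable dense set; that refinement is correct but unnecessary.

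The genuine defect is your last step, where you invoke the characterization \eqref{attform2}. First, \eqref{attform2} is the conclusion of Proposition \ref{att}, whose hypotheses --- $\cald_{\lambda_0}$ neighborhood closed, existence of a \emph{closed measurable} $\cald_{\lambda_0}$-pullback absorbing set, and $\cald_{\lambda_0}$-pullback asymptotic compactness --- are not among the assumptions of Theorem \ref{semcon}; the theorem only assumes that an attractor and some absorbing set exist. Second, a $\cald_{\lambda_0}$-complete orbit in the sense of Definition \ref{comporbit} is a map on all of $\R\times\Omega_1\times\Omega_2$ satisfying \eqref{comporbit1} at every fiber and dominated at every fiber by a single member $D\in\cald_{\lambda_0}$; your $\psi$ is constructed only at the fixed pair $(\omega_1,\omega_2)$, and extending it (say, by orbits inside $\cala_{\lambda_0}$ at the remaining fibers) would require a member of $\cald_{\lambda_0}$ containing both $B$ and $\cala_{\lambda_0}$ --- which is not available, since $\cald_{\lambda_0}$ is an abstract collection not assumed closed under unions, and $\cala_{\lambda_0}\subseteq B$ is not known (\eqref{sem2} constrains only $K_\lambda$ for $\lambda\to\lambda_0$, not $K_{\lambda_0}$ itself). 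The repair is exactly the paper's final step and needs none of this machinery: since $a=\Phi_{\lambda_0}(t,\theta_{1,-t}\omega_1,\theta_{2,-t}\omega_2,\psi(-t))$ with $\psi(-t)\in B(\theta_{1,-t}\omega_1,\theta_{2,-t}\omega_2)$ and $B\in\cald_{\lambda_0}$, the attraction property (iii) of Definition \ref{defatt} gives
$$
{\rm dist}\left(a,\ \cala_{\lambda_0}(\omega_1,\omega_2)\right)
\le
{\rm dist}\left(\Phi_{\lambda_0}(t,\theta_{1,-t}\omega_1,\theta_{2,-t}\omega_2, B(\theta_{1,-t}\omega_1,\theta_{2,-t}\omega_2)),\ \cala_{\lambda_0}(\omega_1,\omega_2)\right)\to 0
$$
as $t\to\infty$, and compactness (hence closedness) of $\cala_{\lambda_0}(\omega_1,\omega_2)$ then yields $a\in\cala_{\lambda_0}(\omega_1,\omega_2)$, the desired contradiction. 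With this substitution your argument is complete and essentially coincides with the paper's.
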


\begin{proof}
Suppose  \eqref{sem4} is false.  Then there
exist   a positive number $\eta$ 
and  a sequence $\lambda_n \to \lambda_0$ such that
for all $n \in \N$,  
\be\label{semp1}
{\rm dist} (\cala_{\lambda_n} (\omega_1, \omega_2), 
 \cala_{\lambda_0} (\omega_1,  \omega_2 ) )
\ge  2  \eta.
\ee
By \eqref{semp1} we find  that
there exists a    sequence $\{x_n\}_{n=1}^\infty$
with $x_n \in  \cala _{\lambda_n} (\omega_1, \omega_2)$ 
 such that  
 \be\label{semp2}
 {\rm dist}
 (x_n,  \cala_{\lambda_0} (\omega_1,  \omega_2 ))   \ge \eta
 \ \text{   for all }  \ n \in \N.
 \ee
 By  \eqref{sem3} we get  that  there  exists  $x_0 \in X$
such that, up to a subsequence, 
\be
 \label{semp3}
\lim_{n \to \infty} x_n =x_0.
\ee
We now  prove $x_0 \in \cala_{\lambda_0}
(\omega_1, \omega_2)$.  
Let 
 $\{t_m\}_{m=1}^\infty$
 be a sequence of  positive
 numbers such that   $t_m \to \infty$.
 Fix $m =1$. Then by the 
  invariance of $\cala_{\lambda_n}$
  for every $n \in \N$, 
there exists a sequence
$\{x_{1,n}\}_{n=1}^\infty$ with $x_{1,n} \in
\cala_{\lambda_n}(\theta_{-t_1} \omega_1,
 \theta_{-t_1} \omega_2 )$
such that   for all  $n \in \N$,
\be  \label{semp4}
x_n = \Phi_{\lambda_n} (t_1, \theta_{-t_1} \omega_1,
\theta_{-t_1} \omega_2, 
 x_{1,n}  ) .
\ee
Since $x_{1,n} \in
\cala_{\lambda_n}(\theta_{-t_1} \omega_1,
 \theta_{-t_1} \omega_2 )$
 for all $n \in \N$,  we get from 
  \eqref{sem3}  that  there exist
$z_1 \in X$ and a subsequence of $\{x_{1,n}\}_{n=1}^\infty$
(which we do not relabel)  such that
\be  \label{semp6}
\lim_{n \to \infty} x_{1,n} = z_1.
\ee
It follows    from  \eqref{sem1} and \eqref{semp6}    
\be  \label{semp7}
\lim_{n \to \infty}
\Phi_{\lambda_n} (t_1, \theta_{-t_1} \omega_1, 
\theta_{-t_1} \omega_2,
x_{1,n} )
= \Phi_{\lambda_0}  (t_1, \theta_{-t_1} \omega_1, 
\theta_{-t_1} \omega_2,   z_1  ).
\ee
By   \eqref{semp3}-\eqref{semp4}
and \eqref{semp7}
we obtain 
  \be  \label{semp8}
x_0 =  \Phi_{\lambda_0}  (t_1, \theta_{-t_1} \omega_1, 
\theta_{-t_1} \omega_2,   z_1  ).
\ee
Note that 
$  \cala_{\lambda_n}(\theta_{-t_1} \omega_1,
\theta_{-t_1} \omega_2)$
$ \subseteq { {K}}_{\lambda_n}(\theta_{-t_1} \omega_1,
\theta_{-t_1} \omega_2)$
and $   x_{1,n}  
\in   \cala_{\lambda_n}(\theta_{-t_1} \omega_1,
\theta_{-t_1} \omega_2)$    for all $n \in \N$.
Thus by \eqref{sem2} we  have
\be  \label{semp9}
\limsup_{n \to \infty} \| x_{1,n} \|_X
\le \limsup_{n \to \infty}
\| K_{\lambda_n} (\theta_{-t_1} \omega_1,
\theta_{-t_1} \omega_2)  \|_X
\le   R_{\lambda_0} (\theta_{-t_1} \omega_1,
\theta_{-t_1} \omega_2) .
\ee
By \eqref{semp6} and
\eqref{semp9} we  get
$\| z_1 \|_X \le   R_{\lambda_0} (\theta_{-t_1} \omega_1,
\theta_{-t_1} \omega_2)$.
By induction,   for every
 $m \ge 1$,   we find that 
there exists  $z_m \in X$ such that
for all   $ m \in \N$,
\be 
\label{semp10}
x_0 =  \Phi_{\lambda_0}  (t_m, \theta_{-t_m} \omega_1, 
\theta_{-t_m} \omega_2,   z_m  ),
\ee
and 
\be\label{semp11}
\| z_m \|_X \le 
 R_{\lambda_0} (\theta_{-t_m} \omega_1,
\theta_{-t_m} \omega_2).
\ee 
 By \eqref{sem1_1}    and the attraction
 property of $\cala_{\lambda_0}$ in $\cald_{\lambda_0}$,
  we obtain
 from \eqref{semp10}-\eqref{semp11}
 that 
 $$
 {\rm dist} (x_0,  \cala_{\lambda_0} (\omega_1, \omega_2))
 =  
 {\rm dist}  (\Phi_{\lambda_0}
 (t_m, \theta_{1, -t_m} \omega_1,
 \theta_{2, -t_m} \omega_2,  z_m ), \ 
  \cala_{\lambda_0} (\omega_1, \omega_2) )
  $$
  $$
  \le
 {\rm dist}  (\Phi_{\lambda_0}
 (t_m, \theta_{1, -t_m} \omega_1,
 \theta_{2, -t_m} \omega_2,  B (\theta_{1, -t_m} \omega_1,
\theta_{2, -t_m} \omega_2) ), \ 
  \cala_{\lambda_0} (\omega_1, \omega_2) )
  \to 0, \ \text{ as } m \to \infty.
  $$
  This shows   that $x_0 \in 
 \cala_{\lambda_0} (\omega_1, \omega_2)$
 since  $\cala_{\lambda_0} (\omega_1, \omega_2)$
 is compact. 
 Thus, by   \eqref{semp3}  we  get
$$ 
{\rm dist}(x_n,  \cala_{\lambda_0} (\omega_1, \omega_2) )
\le  {\rm dist}(x_n, x_0) \to 0, \ 
 \text{as }   n \to \infty.
 $$
 This is in contradiction with  \eqref{semp2}
 and hence proves \eqref{sem4}.
\end{proof}

Next,  we  consider  two special cases of 
Theorem \ref{semcon} where
the limiting cocycle $\Phi_{\lambda_0}$
is    an  autonomous  system or 
a deterministic
non-autonomous  system.
Both cases are interesting in their own right
and   deserve further discussions. 
Suppose $\Phi_{\lambda_0}: \R^+ \times 
\Omega_1 \times \Omega_2 \times X
\to X$  is a  continuous   cocycle.
If $\Phi_{\lambda_0}$ is constant in $\omega_2
\in \Omega_2$, then 
we can drop the dependence of
$\Phi_{\lambda_0}$ on $\omega_2$
and  call such 
  $\Phi_{\lambda_0}$
 a deterministic
 non-autonomous  cocycle.
In other words,  a mapping 
  $\Phi_{\lambda_0}:  \R^+ \times 
\Omega_1   \times X
\to X$ is a continuous  non-autonomous cocycle
on $X$ over $(\Omega_1, \{\thonet\}_{t \in \R})$
if  $\Phi_{\lambda_0}$ satisfies the following conditions:
for every $\omega_1 \in \Omega_1$  and 
$t, \tau \in \R^+$,  

  (i)   $\Phi(0, \omega_1, \cdot) $ is the identity on $X$;
  
  (ii)    $\Phi(t+\tau, \omega_1,  \cdot) =
 \Phi(t, \theta_{1,\tau} \omega_1,   \cdot) 
 \circ \Phi(\tau, \omega_1,  \cdot)$;
 
 (iii)    $\Phi(t, \omega_1, \cdot): X \to  X$ is continuous.
    
Let $\cald_{\lambda_0}$  be a collection of families of
nonempty subsets of $X$ given  by
$$
\cald_{\lambda_0}
=\{
 D=\{ D(\omega_1) \neq \emptyset:  D(\omega_1) \subseteq X, \omega_1
 \in \Omega_1
 \}
\}.
$$
A family $\cala_{\lambda_0} \in \cald_{\lambda_0}$
of compact subsets of $X$
is called  a $\cald_{\lambda_0}$-pullback attractor of $\Phi_{\lambda_0}$ if 
$\cala_{\lambda_0}  $ pullback 
attracts  every  member   of   $\cald_{\lambda_0}$
 and 
$  \Phi_{\lambda_0} (t, \omega_1, \cala(\omega_1)   )
= \cala (\theta_{1,t} \omega_1) $ 
for all $t \in \R^+$ and $\omega_1 \in \Omega_1$.
  As in the random case, 
we assume
there exists a map
${\overline{R}}_{\lambda_0}: \Omega_1  
\to \R$ such that  the family
\be
\label{sem1nd_1}
B=\{ 
B(\omega_1 ) =\{ x \in X: \| x \|_X 
\le {\overline{R}}_{\lambda_0} (\omega_1) \}:
  \omega_1 \in \Omega_1 
\}
 \ \text{ belongs to } \  \cald_{\lambda_0},
\ee
Let $K_\lambda \in \cald_\lambda$
be a $\cald_\lambda$-pullback  absorbing set   
of $\Phi_\lambda$  which satisfies,
  for all $\omega_1 \in \Omega_1$
and $\omega_2 \in \Omega_2$,
\be 
\label{sem2nd}
\limsup_{\lambda  \to   \lambda_0}
\| K_\lambda (\omega_1, \omega_2) \|_X \le 
  {\overline{R}}_{\lambda_0}  (\omega_1).
\ee
Note that  for a deterministic
 non-autonomous
 cocycle
  $\Phi_{\lambda_0}$, 
condition \eqref{sem1} becomes the following,
for each $\omega_1 \in \Omega_1$   and 
 $\omega_2 \in \Omega_2$, 
\be
 \label{sem1nd}
\lim_{n \to \infty} \Phi_{\lambda_n} (t, \omega_1, \omega_2,   x_n )
= \Phi_{\lambda_0} (t, \omega_1,  x),
\ee
where
  $t \in \R^+$,
$\lambda_n  \in \Lambda$ with
$\lambda_n \to \lambda_0$, and
$x_n$, $x \in X$ with $x_n \to x$.
 Replacing conditions \eqref{sem1}-\eqref{sem2} by
\eqref{sem1nd_1}-\eqref{sem1nd},  
 we      get the
following convergence result from Theorem \ref{semcon}
when
$\Phi_{\lambda_0}$ is a deterministic
non-autonomous 
cocycle.

\begin{thm}
\label{semcondn}
Suppose     \eqref{sem3}
and \eqref{sem1nd_1}-\eqref{sem1nd}  hold. Then for
every 
$\omega_1 \in \Omega_1$
and $\omega_2 \in \Omega_2$,
$$
 {\rm dist} (\cala_\lambda (\omega_1, \omega_2), 
 \cala_{\lambda_0} (\omega_1))
\to 0,
\quad \text{as} \quad \lambda \to   \lambda_0.
$$
\end{thm}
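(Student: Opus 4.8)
\emph{Proof proposal.} The plan is to deduce this statement directly from Theorem \ref{semcon} by regarding the deterministic non-autonomous cocycle $\Phi_{\lambda_0}$ as a full cocycle over both parametric systems that happens to be independent of the second variable $\omega_2$. First I would extend $\Phi_{\lambda_0}$ to a mapping on $\R^+ \times \Omega_1 \times \Omega_2 \times X$ by setting $\Phi_{\lambda_0}(t, \omega_1, \omega_2, x) := \Phi_{\lambda_0}(t, \omega_1, x)$ for every $\omega_2 \in \Omega_2$. The identities (i)--(iii) satisfied by the non-autonomous cocycle then immediately upgrade to conditions (ii)--(iv) of Definition \ref{ds1}, and the measurability condition (i) of Definition \ref{ds1} holds trivially since the extended map is constant in $\omega_2$. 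Under the same convention I would read the limiting attractor as $\cala_{\lambda_0}(\omega_1, \omega_2) := \cala_{\lambda_0}(\omega_1)$ and set $R_{\lambda_0}(\omega_1, \omega_2) := {\overline{R}}_{\lambda_0}(\omega_1)$.

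Next I would verify that, under these identifications, the hypotheses \eqref{sem1nd_1}, \eqref{sem1nd} and \eqref{sem2nd} are precisely the hypotheses \eqref{sem1_1}--\eqref{sem2} demanded by Theorem \ref{semcon}. Indeed, with $\Phi_{\lambda_0}(t, \omega_1, \omega_2, x) = \Phi_{\lambda_0}(t, \omega_1, x)$, the convergence \eqref{sem1nd} coincides with \eqref{sem1}; the ball family in \eqref{sem1nd_1} coincides with the one in \eqref{sem1_1} once $R_{\lambda_0} = {\overline{R}}_{\lambda_0}$; and \eqref{sem2nd} is exactly \eqref{sem2}. Condition \eqref{sem3} is common to both statements and needs no reinterpretation. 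It then remains to check that $\cala_{\lambda_0}$, regarded as constant in $\omega_2$, is a genuine $\cald_{\lambda_0}$-pullback attractor for the extended cocycle in the sense used by Theorem \ref{semcon}: compactness is inherited from the deterministic attractor, the invariance $\Phi_{\lambda_0}(t, \omega_1, \omega_2, \cala_{\lambda_0}(\omega_1)) = \Phi_{\lambda_0}(t, \omega_1, \cala_{\lambda_0}(\omega_1)) = \cala_{\lambda_0}(\theta_{1,t}\omega_1)$ carries over verbatim, and the pullback attraction of $\cala_{\lambda_0}$ toward the absorbing ball family $B$ is unaffected because the extended cocycle ignores $\omega_2$.

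With all hypotheses of Theorem \ref{semcon} in place, I would simply invoke that theorem to obtain, for every $\omega_1 \in \Omega_1$ and $\omega_2 \in \Omega_2$,
$$
{\rm dist}\,(\cala_\lambda(\omega_1, \omega_2),\ \cala_{\lambda_0}(\omega_1, \omega_2)) \to 0 \quad \text{as } \lambda \to \lambda_0,
$$
and then rewrite $\cala_{\lambda_0}(\omega_1, \omega_2) = \cala_{\lambda_0}(\omega_1)$ to recover the asserted convergence. I do not expect any serious obstacle here, since the argument is purely a reduction; the only point requiring care is confirming that the proof of Theorem \ref{semcon} never exploits genuine $\omega_2$-dependence of the limiting objects. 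Inspecting that proof, the sole properties of $\cala_{\lambda_0}$ it uses are its compactness and its pullback attraction of the constant-in-$\omega_2$ ball family $B$, both of which survive the reduction; hence the specialization is legitimate and the conclusion follows.
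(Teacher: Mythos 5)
Your proposal is correct and takes essentially the same route as the paper: the paper's own proof treats Theorem \ref{semcondn} as the special case of Theorem \ref{semcon} in which $\Phi_{\lambda_0}$, $\cald_{\lambda_0}$ and $\cala_{\lambda_0}$ are constant with respect to $\omega_2 \in \Omega_2$, exactly the constant-in-$\omega_2$ extension you carry out. The paper omits the verification details (matching \eqref{sem1nd_1}--\eqref{sem1nd} with \eqref{sem1_1}--\eqref{sem1}, and checking that the proof of Theorem \ref{semcon} uses only compactness and attraction of the limiting attractor), which your write-up supplies.
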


\begin{proof}
This theorem can be considered as  a 
special case of Theorem \ref{semcon}
where $\Phi_{\lambda_0}$, $\cald_{\lambda_0}$
and $\cala_{\lambda_0}$ are all constant
with respect to  $\omega_2 \in \Omega_2$.
On the other hand,  we can also prove
Theorem \ref{semcondn} directly by following
the proof of Theorem \ref{semcon}
with minor changes. The details are omitted.
\end{proof}

 We now consider  the case  where 
 $\Phi_{\lambda_0}$ is  an autonomous cocycle,
 i.e.,   $\Phi_{\lambda_0}$
 is constant in both  $\omega_1$  and $\omega_2$.
 In this case,   a nonempty compact subset 
 $\cala_{\lambda_0}$ of $X$ is called
 a global  attractor
 of $\Phi_{\lambda_0}$ if $\cala_{\lambda_0}$ is
 invariant and attracts every bounded set 
 uniformly.
 For  an autonomous cocycle $\Phi_{\lambda_0}$, 
 condition \eqref{sem2nd}
 can be replaced  by  the following: there exists 
 a positive number $C$
  such that   for all $\omega_1 \in \Omega_1$
  and $\omega_2 \in \Omega_2$,
 \be 
\label{sem2nd2}
\limsup_{\lambda  \to   \lambda_0}
\| K_\lambda (\omega_1, \omega_2) \|_X \le 
 C.
\ee
We also assume that  
for  every  $\omega_1 \in \Omega_1$   and 
 $\omega_2 \in \Omega_2$, 
\be
 \label{sem1nd2}
\lim_{n \to \infty} \Phi_{\lambda_n} (t, \omega_1, \omega_2,   x_n )
= \Phi_{\lambda_0} (t,   x),
\ee
where
  $t \in \R^+$,
$\lambda_n  \in \Lambda$ with
$\lambda_n \to \lambda_0$, and
$x_n$, $x \in X$ with $x_n \to x$.
 For an autonomous cocycle $\Phi_{\lambda_0}$,
 we have the following convergence result.

\begin{thm}
\label{semcondn2}
Suppose  \eqref{sem3}
and \eqref{sem2nd2}-\eqref{sem1nd2}  hold. Then for
every 
$\omega_1 \in \Omega_1$
and $\omega_2 \in \Omega_2$,
$$
 {\rm dist} (\cala_\lambda (\omega_1, \omega_2), 
 \cala_{\lambda_0}  )
\to 0,
\quad \text{as} \quad \lambda \to   \lambda_0.
$$
\end{thm}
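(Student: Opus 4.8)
The plan is to adapt the contradiction argument of Theorem~\ref{semcon} to the autonomous setting, where the role previously played by pullback attraction of a tempered family is now taken over by the uniform attraction of bounded sets characterizing a global attractor. Fix $\omega_1 \in \Omega_1$ and $\omega_2 \in \Omega_2$ and suppose the conclusion fails. Then there exist $\eta > 0$, a sequence $\lambda_n \to \lambda_0$, and points $x_n \in \cala_{\lambda_n}(\omega_1, \omega_2)$ with ${\rm dist}(x_n, \cala_{\lambda_0}) \ge \eta$ for all $n$. By the precompactness hypothesis \eqref{sem3}, after passing to a subsequence I may assume $x_n \to x_0$ for some $x_0 \in X$. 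The whole argument then reduces to showing $x_0 \in \cala_{\lambda_0}$, since passing to the limit in ${\rm dist}(x_n, \cala_{\lambda_0}) \ge \eta$ would contradict this.

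To establish $x_0 \in \cala_{\lambda_0}$, I would fix an arbitrary sequence $t_m \to \infty$ and, for each fixed $m$, use invariance of each $\cala_{\lambda_n}$ to write $x_n = \Phi_{\lambda_n}(t_m, \theta_{1,-t_m}\omega_1, \theta_{2,-t_m}\omega_2, x_{m,n})$ with $x_{m,n} \in \cala_{\lambda_n}(\theta_{1,-t_m}\omega_1, \theta_{2,-t_m}\omega_2)$. Applying \eqref{sem3} at the shifted parameters yields, along a further subsequence in $n$, a limit $x_{m,n} \to z_m$. The autonomous convergence hypothesis \eqref{sem1nd2} then gives $\Phi_{\lambda_n}(t_m, \theta_{1,-t_m}\omega_1, \theta_{2,-t_m}\omega_2, x_{m,n}) \to \Phi_{\lambda_0}(t_m, z_m)$; since the left-hand side equals $x_n \to x_0$, I obtain $x_0 = \Phi_{\lambda_0}(t_m, z_m)$ for every $m$. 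No diagonal extraction is needed here, because $x_0$ is already fixed and only $z_m$ varies with $m$, exactly as in the induction of Theorem~\ref{semcon}.

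The uniform bound \eqref{sem2nd2} is what replaces the tempered bound used in Theorem~\ref{semcon}. Since $x_{m,n} \in \cala_{\lambda_n}(\theta_{1,-t_m}\omega_1, \theta_{2,-t_m}\omega_2) \subseteq K_{\lambda_n}(\theta_{1,-t_m}\omega_1, \theta_{2,-t_m}\omega_2)$, taking $\limsup_n$ of $\|x_{m,n}\|_X$ and invoking \eqref{sem2nd2} gives $\|z_m\|_X \le C$ for every $m$, with $C$ independent of $m$. Hence all the $z_m$ lie in the single fixed bounded ball $\overline{B} = \{x \in X : \|x\|_X \le C\}$. Because $\Phi_{\lambda_0}$ is autonomous with global attractor $\cala_{\lambda_0}$ attracting bounded sets uniformly, I have ${\rm dist}(\Phi_{\lambda_0}(t_m, \overline{B}), \cala_{\lambda_0}) \to 0$ as $m \to \infty$; and since $x_0 = \Phi_{\lambda_0}(t_m, z_m) \in \Phi_{\lambda_0}(t_m, \overline{B})$ while ${\rm dist}(x_0, \cala_{\lambda_0})$ does not depend on $m$, this forces ${\rm dist}(x_0, \cala_{\lambda_0}) = 0$, so $x_0 \in \cala_{\lambda_0}$ by compactness, completing the contradiction.

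The step I would be most careful about is precisely this last one, since it is where the argument genuinely departs from Theorem~\ref{semcon}: there the membership $x_0 \in \cala_{\lambda_0}$ followed from pullback attraction applied to the single family $B$ of \eqref{sem1_1}, whereas here it rests on uniform attraction of the fixed ball $\overline{B}$ by the global attractor, and the uniform bound \eqref{sem2nd2} is exactly what guarantees that all the preimages $z_m$ inhabit one such ball independent of $m$. As with Theorem~\ref{semcondn}, an alternative would be to regard the statement as the specialization of Theorem~\ref{semcon} in which $\Phi_{\lambda_0}$, $\cald_{\lambda_0}$ and $\cala_{\lambda_0}$ are constant in both $\omega_1$ and $\omega_2$ and $R_{\lambda_0} \equiv C$, so that the detailed proof may simply be omitted.
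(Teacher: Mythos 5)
Your proof is correct and takes essentially the same approach as the paper: the paper disposes of Theorem \ref{semcondn2} by declaring it similar to Theorem \ref{semcondn}, which in turn is proved by following the proof of Theorem \ref{semcon} with minor changes, and your argument is exactly that adaptation, with the uniform bound \eqref{sem2nd2} supplying a fixed ball $\overline{B}$ whose uniform attraction by the global attractor replaces the pullback attraction of the tempered family from \eqref{sem1_1}. Your closing observation that the result may instead be read off as the specialization of Theorem \ref{semcon} in which $\Phi_{\lambda_0}$, $\cald_{\lambda_0}$ and $\cala_{\lambda_0}$ are constant in $(\omega_1,\omega_2)$ likewise matches the paper's own point of view.
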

 
 \begin{proof}
 The proof is similar to Theorem \ref{semcondn}
 and hence  omitted. 
 \end{proof}

  \section{Stochastic  Reaction-Diffusion Equations on $\R^n$}
\setcounter{equation}{0}

In the rest  of this paper,  we study the existence and 
upper semicontinuity of  tempered pullback 
attractors for  stochastic  Reaction-Diffusion  equations 
on $\R^n$  with  deterministic non-autonomous  terms
as well as multiplicative noise. 
  Given   $\tau \in\R$ and $t > \tau$,  consider      the
 following    equation
 defined for  $x \in \R^n$,
\be
  \label{rd1}
 {\frac {\partial u}{\partial t}}
  +  \lambda u - \Delta u 
     =  f(x, u)    +  g(t, x )   +  \alpha u \circ   {\frac {d\omega}{dt}},
 \ee
 with  initial  condition 
 \be\label{rd2}
 u(x, \tau) = u_\tau (x),   \quad x\in \R^n,
 \ee
where   $\lambda$  and $\alpha$
 are   positive constants,
$g \in L^2_{loc}(\R, \ltwo)$,  
$\omega$  is a  two-sided real-valued Wiener process on a probability space.
Note  that  equation \eqref{rd1} is understood in the sense
of Stratonovich integration. 
 The nonlinearity    $f$   is 
  a smooth
function  that satisfies, for some positive constants $\alpha_1$,
$\alpha_2$ and $\alpha_3$,
\be
\label{f1}
f(x, s) s \le -  \alpha_1 |s|^p + \psi_1(x), \quad \forall \ x \in \R^n, \ \ \forall \ s \in \R, 
\ee
\be
\label{f2}
|f(x, s) |   \le \alpha_2 |s|^{p-1} + \psi_2 (x),
\quad \forall \ x \in \R^n, \ \ \forall \ s \in \R, 
\ee
\be
\label{f3}
{\frac {\partial f}{\partial s}} (x, s)   \le \alpha_3,
\quad \forall \ x \in \R^n, \ \ \forall \ s \in \R, 
\ee
\be
\label{f4}
| {\frac {\partial f}{\partial x}} (x, s) | \le  \psi_3(x),
\quad \forall \ x \in \R^n, \ \ \forall \ s \in \R, 
\ee
where 
$\psi_1 \in L^1(R^n) \cap L^\infty(R^n)$  and $\psi_2, \psi_3 \in L^2(R^n)$.
 In this paper,  we will
 use the    probability space  
 $(\Omega, \calf , P)$, 
 where 
$\Omega = \{ \omega   \in C(\R, \R ):  \omega(0) =  0 \}$,
  $\calf$  is
 the Borel $\sigma$-algebra induced by the
compact-open topology of $\Omega$, and $P$
 is   the corresponding Wiener
measure on $(\Omega, \calf)$.   
 Define  a group  $\{\thtwot \}_{t \in \R}$  acting on  
 $(\Omega, \calf, P)$
   by
\be\label{rdshift1}
 \thtwot \omega (\cdot) = \omega (\cdot +t)
  - \omega (t), \quad  \omega \in \Omega, \ \ t \in \R .
\ee
Then $(\Omega, \calf, P, \{\thtwot\}_{t\in \R})$ is a  parametric 
dynamical  system.
 From \cite{arn1}  we know that      
  there exists a $\thtwot$-invariant set 
  $\tilde{\Omega}\subseteq \Omega$
of full $P$ measure  such that
for each $\omega \in \tilde{\Omega}$, 
\be\label{asome}
{\frac  {\omega (t)}{t}} \to 0 \quad \mbox {as } \ t \to \pm \infty.
\ee
In the sequel,  we    
  will write  $\tilde{\Omega}$ as 
$\Omega$  for   convenience. 
 Let $\Omega_1 =\R$ and for each
 $t \in \R$, define
 a map $\thonet: \R 
 \to \R$ by
 $\thonet (h) = h+ t$
 for all $h \in \R$. 
 We will   define  a continuous  cocycle  
   for  equation \eqref{rd1}
  over $ (\R, \{\thonet\}_{t \in \R} )$    and  $(\Omega, \mathcal{F},
P, \{\thtwot \}_{t\in \R})$.   To that end,  we need to
 transform   the stochastic    equation  into a 
  deterministic  non-autonomous  one.  
Given $\omega  \in \Omega$,  
let $z(t, \omega) = e^{-\alpha \omega (t)}$.
Then $z$ solves  the following stochastic equation
in the sense of Stratonovich integration:
\be
\label{zeq}
{\frac {dz}{dt}}
+ \alpha z \circ {\frac {d\omega}{dt}} =0.
\ee
Given $\tau \in \R$,  $t \ge \tau$, 
$\omega \in \Omega$  and $u_\tau
\in \ltwo$,   let  $u(t, \tau, \omega, v_\tau)$
satisfy \eqref{rd1} with initial condition 
$u_\tau$ at initial time $\tau$.
Then we introduce a  new variable
 $v(t, \tau, \omega, v_\tau)$  
  by
\be
\label{vu}
v (t, \tau, \omega, v_\tau)
=z(t, \omega) u (t, \tau, \omega, v_\tau)
\ \text{ with }  \ v_\tau =  z(\tau, \omega) u_\tau .
\ee
 By \eqref{rd1}-\eqref{rd2}  and \eqref{zeq} we   get  
 \be
  \label{v1}
 {\frac {\partial v}{\partial t}}
  +  \lambda v - \Delta  v
     = z(t, \omega)  f \left ( x,  z^{-1} (t, \omega) v 
     \right )    + 
     z(t, \omega)  g(t, x )    ,
 \ee
 with  initial  condition 
 \be\label{v2}
 v(x, \tau) = v_\tau (x),   \quad x\in \R^n. 
 \ee
 Note that     \eqref{v1}   is
 a deterministic  equation which is
  parametrized 
 by $\omega \in \Omega$. 
 Therefore, by a standard argument
 (see, e.g., \cite{bab1}),  one can show  that
   for every $\tau \in \R$, $\omega \in \Omega$
     and  $v_\tau \in \ltwo$,  problem 
      \eqref{v1}-\eqref{v2} has a unique solution
      $v \in C([\tau, \infty), \ltwo) \bigcap L^2_{loc}( 
      (\tau, \infty),  \hone )$.  In addition,  
      this solution is continuous 
      in  $v_\tau$  
      with   respect to the norm of $\ltwo$  and
         is $(\calf, \calb (\ltwo))$-measurable
      in $\omega \in \Omega$.    
      Based on this fact,  we can      define a cocycle
       $\Phi: \R^+ \times \R \times \Omega \times \ltwo$
$\to  \ltwo$ for      problem
\eqref{rd1}-\eqref{rd2} by using \eqref{vu}.
Given $t \in \R^+$,  $\tau \in \R$, $\omega \in \Omega$ 
 and $u_\tau \in \ltwo$,
let 
 \be \label{rdphi}
 \Phi (t, \tau,  \omega, u_\tau) = 
  u (t+\tau,  \tau, \theta_{2, -\tau} \omega, u_\tau) 
  = {\frac 1{z(t+\tau, \theta_{2, -\tau} \omega)}}
v(t+\tau, \tau,  \theta_{2, -\tau} \omega,  v_\tau),  
\ee
where $v_\tau = z(\tau, \theta_{2, -\tau} \omega) u_\tau $. 
By \eqref{rdphi}  one can check   that   for every
$t \ge 0$, $\tau \ge 0$, $r \in \R$  and 
$\omega \in \Omega$   
$$
\Phi (t + \tau, r, \omega,  \cdot )
=\Phi  (t , \tau + r,  \theta_{2,\tau}\omega,  \cdot)
\circ 
 \Phi (\tau, r, \omega,  \cdot) . 
$$
By   the measurability of $v$  in $\omega \in \Omega$
 and the continuity of $v$ in initial data
 $v_\tau \in \ltwo$, we see   that    $\Phi$ 
 as defined by \eqref{rdphi} 
is a continuous   cocycle on $\ltwo$ 
over $(\R,  \{\thonet\}_{t \in \R})$
and 
$(\Omega, \calf,
P, \{\thtwot \}_{t\in \R})$.
The rest of this paper is devoted to the 
existence and  convergence  of pullback attractors
for $\Phi$   in $\ltwo$.   
For this purpose,  we 
 need   to specify   a collection $\cald$ 
  of families of subsets  of $\ltwo$.

 As  usual, for   a  bounded nonempty  subset 
 $B$  of $\ltwo$,  we write 
   $  \| B\| = \sup\limits_{\psi \in B}
   \| \psi \|_{\ltwo }$. 
Suppose 
   $D =\{ D(\tau, \omega): \tau \in \R, \omega \in \Omega \}$  
    is   a  tempered family of
  bounded nonempty   subsets of $\ltwo $,  that is,  
  for every  $c>0$, $\tau \in \R$   and $\omega \in \Omega$, 
 \be
 \label{attd1}
 \lim_{t \to  - \infty} e^{  c  t} 
 \| D( \tau  +t, \thtwot \omega ) \|  =0. 
 \ee 
 From now on,  we use 
  $\cald$   to denote      the  collection of all  tempered families of
bounded nonempty  subsets of $\ltwo$, i.e.,
 \be
 \label{drd}
\cald = \{ 
   D =\{ D(\tau, \omega): \tau \in \R, \omega \in \Omega \}: \ 
 D  \ \mbox{satisfies} \  \eqref{attd1} \} .
\ee
Note that    
$\cald$  given  by \eqref{drd}
is  neighborhood closed. 
 For the external forcing $g$ we assume   that
  there exists
  $\delta \in [0,  \lambda)$
such that   for every $\tau \in \R$, 
 \be
 \label{gcon1}
 \int_{-\infty}^\tau e^{\delta s    } \| g (  s , \cdot )\|^2 d s
<  \infty.
 \ee
 Sometimes, the following tempered  condition is also needed 
 for $g$:  
 there   exists
  $\delta \in [0,  \lambda)$
such that  for every  $c>0$, 
 \be
 \label{gcon2}
 \lim_{t \to -\infty} e^{c t} 
 \int_{-\infty}^0   e^{\delta  s}
  \|g( s+t, \cdot ) \|^2     d s  =0.
  \ee
  Observe that condition \eqref{gcon2} is
  stronger than \eqref{gcon1}   for $g 
    \in L^2_{loc} (\R, \ltwo)$, and both 
   conditions   
do   not require that    $g$ 
is  bounded in $\ltwo$
at $ \pm \infty$.

\section{Uniform Estimates of Solutions}
\setcounter{equation}{0}

      In this section, we
 derive uniform estimates of  solutions  for
 problem \eqref{rd1}-\eqref{rd2}.
 These estimates are needed to prove 
    the existence  and continuity of 
$\cald$-pullback  attractors.

\begin{lem}
\label{lem1}
 Suppose  \eqref{f1}-\eqref{f4}  and \eqref{gcon1} hold.
Then for every $\tau \in \R$, $\omega \in \Omega$   
and $D=\{D(\tau, \omega)
: \tau \in \R,  \omega \in \Omega\}  \in \cald$,
 there exists  $T=T(\tau, \omega,  D)>0$ such that 
 for all $t \ge T$, the solution
 $v$ of  problem  \eqref{v1}-\eqref{v2} 
   with $\omega$ replaced by
 $\theta_{2, -\tau} \omega$  satisfies
$$
\| v(\tau, \tau -t,  \theta_{2, -\tau} \omega, v_{\tau -t}  ) \|^2 
 \le 
  c z^{-2} (-\tau, \omega)
   \int_{-\infty}^0
e^{\lambda s}   z^2(s,  \omega )  \left (1+   \| g(s+\tau, \cdot) \|^2 \right )   ds
$$
and
$$ 
\int_{\tau -t}^\tau e^{\lambda (s-\tau)} \left (
\|   v(s, \tau -t,\theta_{2, -\tau} \omega, v_{\tau -t} ) \|^2 _{\hone}
+  
 z^2(s, \theta_{2, -\tau} \omega )
\|    u(s, \tau -t, \theta_{2, -\tau} \omega, u_{\tau -t} )  \|^p_p
\right ) ds
$$
$$
\le  
  c z^{-2} (-\tau, \omega)
   \int_{-\infty}^0
e^{\lambda s}   z^2(s,  \omega )  \left (1+   \| g(s+\tau, \cdot) \|^2 \right )   ds,
$$
 where $v_{\tau -t}\in D(\tau -t, \theta_{2, -t} \omega)$ and
  $c$ is a  positive constant  independent of $\tau$, $\omega$, $D$ 
  and $\alpha$.
\end{lem}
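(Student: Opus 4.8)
The plan is to run a standard a priori energy estimate on the transformed problem \eqref{v1}--\eqref{v2} (with $\omega$ replaced by $\theta_{2,-\tau}\omega$), and then to recast the resulting bound into the stated form by exploiting the cocycle identity for $z$ together with the temperedness of $D$. Throughout, abbreviate $z=z(s,\theta_{2,-\tau}\omega)$, and recall the regularity $v\in C([\tau-t,\infty),\ltwo)\cap L^2_{loc}(\hone)$ justifies the computation.

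First I would take the $\ltwo$ inner product of \eqref{v1} with $v$. Since $z^{-1}v=u$, the nonlinear term equals $z^2\ii f(x,u)u\,dx$, which by \eqref{f1} is dominated by $z^2(-\alpha_1\|u\|_p^p+\|\psi_1\|_{L^1})$, while the Laplacian contributes $\|\nabla v\|^2$. The one point needing care is the forcing term $z\ii gv\,dx$: instead of spending all the available dissipation, I would bound it by $\tfrac{\lambda}{4}\|v\|^2+\lambda^{-1}z^2\|g\|^2$ via Young's inequality, so that after moving it left a coefficient \emph{strictly larger} than $\lambda$ survives in front of $\|v\|^2$. This produces a differential inequality of the form
\be
\frac{d}{ds}\|v\|^2+\tfrac{3\lambda}{2}\|v\|^2+2\|\nabla v\|^2+2\alpha_1 z^2\|u\|_p^p\le c_1\,z^2\big(1+\|g(s,\cdot)\|^2\big),
\ee
with $c_1=c_1(\lambda,\|\psi_1\|_{L^1})$ independent of $\tau,\omega,\alpha$.

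Next I would multiply by the integrating factor $e^{\lambda s}$ and integrate over $(\tau-t,\tau)$. Because the coefficient of $\|v\|^2$ exceeds $\lambda$, a genuine term $\tfrac{\lambda}{2}\int e^{\lambda s}\|v\|^2\,ds$ survives on the left alongside $2\int e^{\lambda s}\|\nabla v\|^2\,ds$ and $2\alpha_1\int e^{\lambda s}z^2\|u\|_p^p\,ds$; these jointly control the full $\hone$ norm and the $L^p$ term, which is precisely what lets $\|v\|^2_{\hone}$ (and not merely $\|\nabla v\|^2$) appear in the second assertion. Dividing by $e^{\lambda\tau}$ yields both the pointwise bound for $\|v(\tau)\|^2$ and the weighted integral bound, each controlled by $e^{-\lambda t}\|v_{\tau-t}\|^2+e^{-\lambda\tau}\int_{\tau-t}^\tau e^{\lambda s}z^2\,(1+\|g(s,\cdot)\|^2)\,ds$. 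To recast the second term I would substitute $s\mapsto s+\tau$ and use the cocycle identity $z(s+\tau,\theta_{2,-\tau}\omega)=z(s,\omega)z^{-1}(-\tau,\omega)$, which produces exactly the prefactor $z^{-2}(-\tau,\omega)$ and the shifted forcing $\|g(s+\tau,\cdot)\|^2$; enlarging the domain from $(-t,0)$ to $(-\infty,0)$ (the integrand being nonnegative) then gives the stated right-hand side, whose finiteness follows from \eqref{gcon1}, the bound $\delta<\lambda$, and the sublinear growth \eqref{asome} of $\omega$ (so $e^{\lambda s}z^2(s,\omega)\le Ce^{\delta s}$ for $s$ near $-\infty$). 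For the initial-data term, since $v_{\tau-t}\in D(\tau-t,\theta_{2,-t}\omega)$ with $D\in\cald$ tempered, choosing $c<\lambda/2$ in \eqref{attd1} gives $e^{-\lambda t}\|v_{\tau-t}\|^2\to 0$ as $t\to\infty$; hence there is $T=T(\tau,\omega,D)$ such that for $t\ge T$ this term is dominated by the strictly positive second term and absorbed into $c$.

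The main obstacle I anticipate is the bookkeeping around the random coefficient $z(s,\theta_{2,-\tau}\omega)$: one must keep the exponential weight exactly equal to $\lambda$ (so that the cocycle identity produces the clean factor $z^{-2}(-\tau,\omega)$ and the correct exponent $e^{\lambda s}$ in the bound) while \emph{simultaneously} retaining enough $\|v\|^2$-dissipation to recover the full $\hone$ norm rather than only $\|\nabla v\|^2$. The energy estimate and the Gronwall step are otherwise routine.
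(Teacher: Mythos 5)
Your proposal is correct and follows essentially the same route as the paper's proof: the identical energy estimate (spending only $\tfrac{\lambda}{4}\|v\|^2$ on the forcing term so that the coefficient $\tfrac{3\lambda}{2}$ survives), multiplication by $e^{\lambda s}$ and integration over $(\tau-t,\tau)$, the shift identity $z(s+\tau,\theta_{2,-\tau}\omega)=z(s,\omega)z^{-1}(-\tau,\omega)$ to produce the prefactor $z^{-2}(-\tau,\omega)$, and temperedness of $D$ to absorb $e^{-\lambda t}\|v_{\tau-t}\|^2$ for $t\ge T$. Your finiteness argument via $e^{\lambda s}z^2(s,\omega)\le Ce^{\delta s}$ is in fact slightly more explicit than the paper's, which only asserts the integrals are well-defined by \eqref{asome} and \eqref{gcon1}.
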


\begin{proof}  
It follows   from   \eqref{v1} that
\be
\label{plem1_1}
{\frac 12} {\frac d{dt}} \|v\|^2 + \lambda \| v\|^2 + \| \nabla v \|^2 =
\ii  z(t, \omega) f(x, u )   v  dx  + z(t, \omega)(g, v).
 \ee
 By   \eqref{f1},  for the nonlinear term  in
    \eqref{plem1_1}   we have
\be\label{plem1_2}
\ii z(t, \omega)   f(x, u  )    v  dx
\le -\alpha_1 z^2(t, \omega) \| u \|^p_p
+ z^2(t, \omega) \ii \psi_1 dx .
\ee
Note that  the last   term  on the right-hand side of
\eqref{plem1_1} is bounded by
\be
\label{plem1_3}
z(t, \omega) |(g, v)|
\le {\frac 14} \lambda \| v\|^2 + {\frac 1\lambda} z^2(t, \omega)\|g \|^2.
\ee
It follows   from 
\eqref{plem1_1}-\eqref{plem1_3}   that
  \be
\label{plem1_4}
 {\frac d{dt}} \|v\|^2 + {\frac 32} \lambda \| v\|^2 
 +  2 \| \nabla v \|^2
 + 2 \alpha_1 z^2 (t, \omega) \| u  \|^p_p
 \le {\frac 2\lambda} z^2 (t, \omega) \| g \|^2
+  c_1 z^2(t, \omega).
\ee
First multiplying  \eqref{plem1_4} by $e^{\lambda t}$ and then
integrating over $(\tau -t, \tau)$ with $t \ge  0$,
we  obtain, for every $\omega \in \Omega$, 
$$
\| v(\tau, \tau -t, \omega, v_{\tau -t} ) \|^2
+  2\int_{\tau -t}^\tau e^{\lambda (s-\tau)}
\| \nabla v(s, \tau -t, \omega, v_{\tau -t} ) \|^2 ds
$$
$$
+ {\frac 12} \lambda
\int_{\tau -t}^\tau e^{\lambda (s-\tau)}
\|   v(s, \tau -t, \omega, v_{\tau -t} ) \|^2 ds
+ 2 \alpha_1 
 \int_{\tau -t}^\tau e^{\lambda (s-\tau)}z^2(s, \omega)
\|   u(s, \tau -t, \omega, u_{\tau -t}  )  \|^p_p ds
$$
$$
\le e^{-\lambda t} \| v_{\tau -t} \|^2
+{\frac 2\lambda} e^{-\lambda \tau} \int_{\tau -t}^\tau
e^{\lambda s}   z^2(s, \omega) \| g(s, \cdot ) \|^2   ds
+ c_1  \int_{\tau -t}^\tau e^{\lambda (s-\tau)} z^2(s, \omega) ds.
$$
Given $\tau \in \R$ and $\omega \in \Omega$, replacing 
 $\omega$ by $\theta_{2, -\tau}  \omega$ in the above, 
 we obtain for all $t \in \R^+$, 
$$
\| v(\tau, \tau -t, \theta_{2, -\tau} \omega, v_{\tau -t} ) \|^2
+ 2 \int_{\tau -t}^\tau e^{\lambda (s-\tau)}
\| \nabla v(s, \tau -t,\theta_{2, -\tau} \omega, v_{\tau -t} ) \|^2 ds
$$
$$
+ {\frac 12}\lambda
\int_{\tau -t}^\tau e^{\lambda (s-\tau)}
\|   v(s, \tau -t,\theta_{2, -\tau} \omega, v_{\tau -t} ) \|^2 
+ 2 \alpha_1 
 \int_{\tau -t}^\tau e^{\lambda (s-\tau)}
 z^2(s, \theta_{2, -\tau} \omega )
\|    u(s, \tau -t, \theta_{2, -\tau} \omega, u_{\tau -t} )  \|^p_p
$$
$$
\le e^{-\lambda t} \| v_{\tau -t} \|^2
+{\frac 2\lambda} e^{-\lambda \tau} \int_{\tau -t }^\tau
e^{\lambda s}  z^2(s, \theta_{2, -\tau} \omega ) 
\| g(s, \cdot) \|^2   ds
+   c_1  \int_{\tau -t}^\tau e^{\lambda (s-\tau)} 
 z^2(s, \theta_{2, -\tau} \omega ) ds
$$
\be\label{plem1_8}
\le e^{-\lambda t} \| v_{\tau -t} \|^2
+{\frac 2\lambda}z^{-2} (-\tau, \omega)
   \int_{-\infty}^0
e^{\lambda s}   z^2(s,  \omega )   \| g(s+\tau, \cdot) \|^2   ds
 +   c_1 z^{-2} (-\tau, \omega)
    \int_{-\infty }^0 e^{\lambda s}  z^2(s,  \omega ) ds.
\ee
By \eqref{asome} and \eqref{gcon1}   one
can check  that 
  the last two integrals on the right-hand side of 
\eqref{plem1_8} are well-defined.
On the other hand,
since
$v_{\tau -t} \in D(\tau -t, \theta_{2, -t} \omega)$
and $D \in \cald$,   we see  that 
  there exists $T=T(\tau, \omega, D)>0$   such that  for all
$t \ge T$,  
$$
e^{-\lambda t} \| v_{\tau -t} \|^2 
\le
e^{-\lambda t} \| D(\tau -t, \theta_{2, -t} \omega) \|^2 
\le 
z^{-2} (-\tau, \omega)
    \int_{-\infty }^0 e^{\lambda s}  z^2(s,  \omega ) ds,
    $$
which along with 
 \eqref{plem1_8} completes   the proof.  \end{proof}

As a  direct  consequence of Lemma \ref{lem1},
 we have the following
estimates.  
\begin{cor}
\label{cor2}
 Suppose  \eqref{f1}-\eqref{f4}  and \eqref{gcon1} hold.
Then for every $\tau \in \R$, $\omega \in \Omega$   
and $D=\{D(\tau, \omega)
: \tau \in \R,  \omega \in \Omega\}  \in \cald$,
 there exists  $T=T(\tau, \omega,  D) \ge 1$ such that 
 for all $t \ge T$, the solution
 $v$ of  problem  \eqref{v1}-\eqref{v2} 
   with $\omega$ replaced by
 $\theta_{2, -\tau} \omega$  satisfies
$$ 
\int_{\tau -1}^\tau   \left (
\|   v(s, \tau -t,\theta_{2, -\tau} \omega, v_{\tau -t} ) \|^2 _{\hone}
+  
 z^2(s, \theta_{2, -\tau} \omega )
\|    u(s, \tau -t, \theta_{2, -\tau} \omega, u_{\tau -t} )  \|^p_p
\right ) ds
$$
$$
\le 
  c z^{-2} (-\tau, \omega)
   \int_{-\infty}^0
e^{\lambda s}   z^2(s,  \omega )  \left (1+   \| g(s+\tau, \cdot) \|^2 \right )   ds,
 $$
 where $v_{\tau -t}\in D(\tau -t, \theta_{2, -t} \omega)$ and
  $c$ is a  positive constant  independent of $\tau$, $\omega$, $D$ 
  and $\alpha$.
\end{cor}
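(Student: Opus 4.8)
The plan is to obtain the Corollary as an immediate weakening of the second estimate in Lemma~\ref{lem1}, via two elementary reductions: shrinking the interval of integration from $(\tau-t,\tau)$ to $(\tau-1,\tau)$, and discarding the exponential weight $e^{\lambda(s-\tau)}$. To organize the argument, I would write $\Psi(s)$ for the nonnegative integrand
$$
\Psi(s) = \| v(s, \tau-t, \theta_{2,-\tau}\omega, v_{\tau-t}) \|_{\hone}^2
+ z^2(s, \theta_{2,-\tau}\omega)\, \| u(s, \tau-t, \theta_{2,-\tau}\omega, u_{\tau-t}) \|_p^p
$$
that appears in both the Corollary and the second inequality of Lemma~\ref{lem1}. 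The key point is simply that $\Psi(s)\ge 0$.

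First I would fix the absorption time: let $T=T(\tau,\omega,D)$ be the constant furnished by Lemma~\ref{lem1}, enlarged if necessary so that $T\ge 1$. Since Lemma~\ref{lem1} holds for all $t\ge T$, this enlargement costs nothing, and it guarantees the inclusion $(\tau-1,\tau)\subseteq(\tau-t,\tau)$ for every $t\ge T$. Next, for $s\in[\tau-1,\tau]$ one has $-1\le s-\tau\le 0$, hence $e^{\lambda(s-\tau)}\ge e^{-\lambda}$; combining this lower bound on the weight with the inclusion of intervals and the nonnegativity of $\Psi$ yields
$$
\int_{\tau-1}^{\tau} \Psi(s)\, ds
\;\le\; e^{\lambda} \int_{\tau-1}^{\tau} e^{\lambda(s-\tau)}\Psi(s)\, ds
\;\le\; e^{\lambda} \int_{\tau-t}^{\tau} e^{\lambda(s-\tau)}\Psi(s)\, ds .
$$
Applying the second estimate of Lemma~\ref{lem1} to the last integral bounds it by $c\,z^{-2}(-\tau,\omega)\int_{-\infty}^0 e^{\lambda s} z^2(s,\omega)\bigl(1+\|g(s+\tau,\cdot)\|^2\bigr)\,ds$, which is exactly the right-hand side required.

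To finish, I would absorb the harmless prefactor $e^{\lambda}$ into the constant: the new constant $e^{\lambda}c$ is again independent of $\tau$, $\omega$, $D$ and $\alpha$, since $\lambda$ is a fixed parameter of the equation and the noise intensity $\alpha$ enters only through $z$, which already sits on both sides of the inequality. I do not anticipate any genuine obstacle here — the only points deserving a word of care are the choice $T\ge 1$ (needed so that $(\tau-1,\tau)$ lies inside $(\tau-t,\tau)$) and the observation that the enlarged constant retains its $\alpha$-independence; otherwise the Corollary is a direct localization of Lemma~\ref{lem1}.
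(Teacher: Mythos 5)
Your proposal is correct and coincides with the paper's own argument: the paper likewise deduces the Corollary from Lemma \ref{lem1} together with the bound $e^{\lambda(s-\tau)} \ge e^{-\lambda}$ on $[\tau-1,\tau]$, absorbing the factor $e^{\lambda}$ into the constant. Your additional remarks on taking $T \ge 1$ and on the $\alpha$-independence of the enlarged constant are exactly the right points of care, though the paper leaves them implicit.
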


\begin{proof} 
This inequality  follows from  Lemma \ref{lem1}
and the fact 
 $e^{\lambda (s-\tau)} \ge e^{-\lambda}$
for   $\tau-1 \le s  \le   \tau$.   \end{proof}

The following estimates are needed when we derive
the convergence of pullback attractors.

\begin{lem}
\label{socon1}
Suppose  \eqref{f1}-\eqref{f4}  and \eqref{gcon1} hold.
Then for every $\tau \in \R$, $\omega \in \Omega$
 and $v_\tau \in \ltwo$, 
the solution
 $v$ of  problem  \eqref{v1}-\eqref{v2} 
    satisfies,   for
 all  $t  \ge \tau$,
 $$ 
 \|   v(t, \tau ,  \omega, v_{\tau } ) \|^2 +
\int_{\tau  }^t e^{\lambda (s-t)} \left (
\|   v(s, \tau ,  \omega, v_{\tau } ) \|^2 _{\hone}
+  
 z^2(s,  \omega )
\|    u(s, \tau ,  \omega, u_{\tau } )  \|^p_p
\right ) ds
$$
$$
\le \|v_\tau\|^2 + 
  c  
   \int_{\tau}^t
   z^2(s,  \omega )   \| g(s, \cdot) \|^2   ds
 +   c \int_{\tau}^t    z^2(s,  \omega ) ds,
 $$ 
  where $c$ is a  positive constant  independent of $\tau$, $\omega$
  and $\alpha$.
  \end{lem}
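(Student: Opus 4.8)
The plan is to reuse the differential inequality \eqref{plem1_4} already established in the proof of Lemma \ref{lem1}. That inequality was derived directly from \eqref{v1} by testing with $v$ and applying \eqref{f1} together with Young's inequality, and it makes no reference to the pullback structure, so it is available verbatim here. The only conceptual difference is that I now keep the solution anchored at the fixed initial time $\tau$ and integrate \emph{forward} over $[\tau, t]$, retaining the initial energy $\| v_\tau \|^2$ on the right-hand side instead of discarding it through a temperedness argument as in Lemma \ref{lem1}.

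Concretely, I would multiply \eqref{plem1_4} by $e^{\lambda t}$ and use $e^{\lambda t}( {\frac d{dt}} \| v \|^2 + \lambda \| v \|^2 ) = {\frac d{dt}}( e^{\lambda t} \| v \|^2 )$, so that the first two terms of \eqref{plem1_4} become ${\frac d{dt}}( e^{\lambda t} \| v \|^2 ) + {\frac 12}\lambda e^{\lambda t} \| v \|^2$; then integrate over $[\tau, t]$. Multiplying the outcome by $e^{-\lambda t}$ turns the boundary contribution from the initial time into $e^{-\lambda (t-\tau)} \| v_\tau \|^2 \le \| v_\tau \|^2$, valid because $t \ge \tau$, and moving this term to the right produces an inequality of the form
\[
\| v(t) \|^2 + \int_\tau^t e^{\lambda (s-t)} \Big( {\frac 12}\lambda \| v \|^2 + 2\| \nabla v \|^2 + 2\alpha_1 z^2 \| u \|^p_p \Big) ds \le \| v_\tau \|^2 + {\frac 2\lambda} \int_\tau^t e^{\lambda (s-t)} z^2 \| g \|^2 ds + c_1 \int_\tau^t e^{\lambda (s-t)} z^2 ds .
\]

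To bring this into the stated form I would, on the left, bound ${\frac 12}\lambda \| v \|^2 + 2\| \nabla v \|^2 \ge c\, \| v \|^2_{\hone}$ with $c = \min\{ {\frac 12}\lambda, 2 \}$ while keeping the $L^p$ term with its coefficient $2\alpha_1$, and, on the right, use $e^{\lambda(s-t)} \le 1$ for $\tau \le s \le t$ to dominate the two weighted integrals by $\int_\tau^t z^2 \| g \|^2 ds$ and $\int_\tau^t z^2 ds$, respectively. This yields exactly the claimed bound with a constant $c$ depending only on $\lambda$, $\alpha_1$ and $\| \psi_1 \|_{L^1(\R^n)}$.

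Since all of the analytic work is already encapsulated in \eqref{plem1_4}, there is no substantive obstacle; the argument is essentially bookkeeping. The two points deserving mild care are verifying that the final constant is genuinely independent of $\alpha$ — which is inherited automatically, as the constants in \eqref{plem1_4} are $\alpha$-independent — and correctly repackaging the $L^2$ and gradient contributions into the $\hone$ norm. In contrast to Lemma \ref{lem1}, none of \eqref{asome}, \eqref{gcon1}, or the temperedness of $D$ is invoked here, precisely because this is a finite-horizon forward estimate in which $\| v_\tau \|^2$ is retained; such bounds are exactly what one needs later to control solutions on bounded time intervals when passing to the limit in the upper-semicontinuity arguments.
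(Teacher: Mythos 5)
Your proposal is correct and follows essentially the same route as the paper: the paper likewise multiplies \eqref{plem1_4} by $e^{\lambda t}$, integrates over $(\tau,t)$, and then uses $e^{\lambda(\tau-t)}\le 1$ and $e^{\lambda(s-t)}\le 1$ to retain $\|v_\tau\|^2$ and remove the exponential weights on the right-hand side. The only cosmetic difference is that you spell out the repackaging of the $L^2$ and gradient terms into the $\hone$ norm, which the paper leaves implicit.
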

 
\begin{proof}
Multiplying  \eqref{plem1_4} by $e^{\lambda t}$ and then
integrating over $(\tau , t)$,
we  get,   
$$
\| v(t, \tau , \omega, v_{\tau } ) \|^2
+  2\int_{\tau }^t e^{\lambda (s-t)}
\| \nabla v(s, \tau , \omega, v_{\tau } ) \|^2 ds
$$
$$
+ {\frac 12} \lambda
\int_{\tau }^t e^{\lambda (s- t)}
\|   v(s, \tau , \omega, v_{\tau } ) \|^2 ds
+ 2 \alpha_1 
 \int_{\tau }^t e^{\lambda (s-t)}z^2(s, \omega)
\|   u(s, \tau , \omega, u_{\tau }  )  \|^p_p ds
$$
$$
\le e^{ \lambda (\tau -t ) } \| v_{\tau } \|^2
+{\frac 2\lambda}  \int_{\tau }^t
e^{\lambda (s-t)}   z^2(s, \omega) \| g(s, \cdot ) \|^2   ds
+ c_1  \int_{\tau }^t e^{\lambda (s-t)} z^2(s, \omega) ds
$$
 $$
\le   \| v_{\tau } \|^2
+{\frac 2\lambda}  \int_{\tau }^t
   z^2(s, \omega) \| g(s, \cdot ) \|^2   ds
+ c_1  \int_{\tau }^t   z^2(s, \omega) ds.
$$
This completes    the proof.  \end{proof}

  \begin{lem}
\label{lem3}
 Suppose  \eqref{f1}-\eqref{f4}  and \eqref{gcon1} hold.
Then for every $\tau \in \R$, $\omega \in \Omega$   
and $D=\{D(\tau, \omega)
: \tau \in \R,  \omega \in \Omega\}  \in \cald$,
 there exists  $T=T(\tau, \omega,  D) \ge 1$ such that 
 for all $t \ge T$, the solution
 $v$ of  problem  \eqref{v1}-\eqref{v2} 
  with $\omega$ replaced by
 $\theta_{2, -\tau} \omega$  satisfies
 $$
  \| \nabla v(\tau, \tau -t,\theta_{2, -\tau} \omega, v_{\tau -t} ) \|^2
\le 
 c z^{-2} (-\tau, \omega)
   \int_{-\infty}^0
e^{\lambda s}   z^2(s,  \omega ) 
\left (1 +   \| g(s+\tau, \cdot) \|^2  \right )  ds,
 $$
 where $v_{\tau -t}\in D(\tau -t, \theta_{2, -t} \omega)$ and
  $c$ is a  positive constant  independent of $\tau$, $\omega$, $D$ 
  and $\alpha$.
\end{lem}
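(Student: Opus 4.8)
The plan is to obtain the $\hone$ bound by deriving a differential inequality for $\|\nabla v\|^2$ and feeding it into the uniform Gronwall lemma, with the time-integrated control supplied by Corollary \ref{cor2}. Since the solution is only known a priori to lie in $L^2_{loc}((\tau,\infty),\hone)$, the estimate below is first carried out formally (testing against $-\Delta v$) and then justified by a standard Galerkin approximation on $\R^n$.

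First I would take the $\ltwo$ inner product of \eqref{v1} with $-\Delta v$ and integrate by parts. The time-derivative term yields ${\frac 12}{\frac d{dt}}\|\nabla v\|^2$ and the linear terms produce $\lambda\|\nabla v\|^2 + \|\Delta v\|^2$, so the two source terms must be controlled. Writing $u = z^{-1}(t,\omega)v$ and integrating by parts, the nonlinear contribution is
\be
\ii z(t,\omega) f(x,u)(-\Delta v)\,dx = \ii {\frac{\partial f}{\partial s}}(x,u)\,|\nabla v|^2\,dx + \ii z(t,\omega){\frac{\partial f}{\partial x}}(x,u)\cdot\nabla v\,dx,
\ee
where the chain rule uses $\nabla u = z^{-1}(t,\omega)\nabla v$ because $z$ is independent of $x$. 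The monotonicity bound \eqref{f3} controls the first integral by $\alpha_3\|\nabla v\|^2$, while \eqref{f4} together with $\psi_3\in L^2(\R^n)$ and Young's inequality controls the second by ${\frac\lambda2}\|\nabla v\|^2 + c\,z^2(t,\omega)\|\psi_3\|^2$. The forcing term satisfies $\ii z(t,\omega) g(-\Delta v)\,dx \le {\frac12}\|\Delta v\|^2 + {\frac12}z^2(t,\omega)\|g\|^2$, and the resulting $\|\Delta v\|^2$ is absorbed on the left. Collecting terms gives a differential inequality of the form
\be
{\frac d{dt}}\|\nabla v\|^2 \le (2\alpha_3 - \lambda)\|\nabla v\|^2 + c\,z^2(t,\omega)\left(1 + \|g(t,\cdot)\|^2\right).
\ee

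The coefficient $2\alpha_3-\lambda$ need not be negative, so a direct integrating-factor argument does not produce a bound; this is exactly why the uniform Gronwall lemma is needed, and it is the main obstacle I anticipate. I would apply it on the interval $(\tau-1,\tau)$ (with $\omega$ replaced by $\theta_{2,-\tau}\omega$ and $t\ge 1$, so that $v$ is defined throughout): the integral of the coefficient over this unit interval is the constant $2\alpha_3-\lambda$, the integral of $\|\nabla v\|^2$ is bounded by Corollary \ref{cor2}, and the integral of the forcing term is estimated through the change of variables $z(s,\theta_{2,-\tau}\omega) = z(s-\tau,\omega)z^{-1}(-\tau,\omega)$ already used in Lemma \ref{lem1}. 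Since $e^{\lambda(s-\tau)}\ge e^{-\lambda}$ for $\tau-1\le s\le\tau$, both integrals are dominated by $c\,z^{-2}(-\tau,\omega)\int_{-\infty}^0 e^{\lambda s}z^2(s,\omega)(1+\|g(s+\tau,\cdot)\|^2)\,ds$, which is exactly the right-hand side in the statement. The uniform Gronwall lemma then converts the integrated control into the pointwise bound on $\|\nabla v(\tau,\tau-t,\theta_{2,-\tau}\omega,v_{\tau-t})\|^2$, completing the proof.
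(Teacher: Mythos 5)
Your proposal is correct and follows essentially the same route as the paper: testing \eqref{v1} with $-\Delta v$, integrating the nonlinearity by parts and controlling it via \eqref{f3}--\eqref{f4}, absorbing $\|\Delta v\|^2$, and then converting the integrated control from Corollary \ref{cor2} (together with the shift identity $z(s,\theta_{2,-\tau}\omega)=z(s-\tau,\omega)z^{-1}(-\tau,\omega)$ and $e^{\lambda(s-\tau)}\ge e^{-\lambda}$) into a pointwise bound at time $\tau$. The only cosmetic difference is that the paper carries out the uniform-Gronwall step by hand, integrating the differential inequality from $s$ to $\tau$ and then averaging over $s\in(\tau-1,\tau)$, rather than citing the lemma.
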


\begin{proof}
Multiplying    \eqref{v1} by 
    $\Delta v$ and then integrating  over $\R^n$  we get  
\be
\label{plem3_1}
{\frac 12} {\frac d{dt}} \| \nabla v \|^2
+ \lambda \| \nabla  v \|^2
+ \| \Delta v \|^2
= - z(t, \omega) \ii f(x, u ) \Delta v dx
- z(t, \omega) (g, \Delta v ).
\ee
By \eqref{f3}-\eqref{f4},  the first term on the right-hand
side  of \eqref{plem3_1} satisfies
 $$
-  z(t, \omega)  \ii f(x,  u ) \;  \Delta v dx
=  z(t, \omega)   \ii {\frac {\partial f}{\partial x}} (x, u) \; \nabla  v dx
+   \ii {\frac {\partial f}{\partial u}} (x,u) \;   | \nabla v |^2 dx
$$
\be\label{plem3_2}
\le z(t, \omega) \| \psi_3 \| \| \nabla v \|
+\alpha_3  \| \nabla v \|^2
\le
\left ( \alpha_3 + {\frac 12} \right ) \| \nabla v \|^2
+{\frac 12} z^2(t, \omega) \| \psi_3 \|^2.
\ee
For  the last term on the right-hand side
of \eqref{plem3_1}  we have
\be
\label{plem3_3}
|- z(t, \omega)  (g, \Delta v)|  
\le {\frac 12} \| \Delta v \|^2
+  {\frac 12}  z^2 (t, \omega) \| g \|^2 .
\ee
By  \eqref{plem3_1}-\eqref{plem3_3}  we get 
$$
  {\frac d{dt}} \| \nabla v \|^2
+ 2 \lambda \| \nabla  v \|^2
+ \| \Delta  v \|^2 
 \le   (1 + 2 \alpha_3 ) \| \nabla v \|^2 
 + z^2(t, \omega) \| \psi_3 \|^2
 + z^2 (t, \omega) \| g \|^2.
 $$
Therefore we have   
\be\label{plem3_5}
  {\frac d{dt}} \| \nabla v \|^2
 \le   c_1  \| \nabla v \|^2 
 + z^2 (t, \omega) \| g \|^2
  + c_1 z^2(t, \omega) . 
\ee
 Given $t  \in \R^+$, $\tau \in \R$   and 
 $\omega \in \Omega$,  let 
  $s \in (\tau-1, \tau)$. 
  By   integrating  \eqref{plem3_5}   over 
 $(s, \tau)$     we  get
 $$
 \| \nabla v(\tau, \tau -t,   \omega, v_{\tau -t} ) \|^2
 \le
 \| \nabla v(s, \tau -t,  \omega, v_{\tau -t} ) \|^2 
 $$
 $$
 + c_1 \int_s^\tau \| \nabla v(\xi, \tau -t,  \omega, v_{\tau -t} ) \|^2  d \xi 
 + \int_s^\tau  z^2 (\xi, \omega)  \|g(\xi, \cdot )\|^2 d\xi
 + \int_s^\tau z^2 (\xi, \omega) d \xi.
 $$
 Integrating again  with respect to $s$ on  $(\tau-1, \tau)$, we  obtain
 $$
 \| \nabla v(\tau, \tau -t,   \omega, v_{\tau -t} ) \|^2
 \le
 (1+ c_1)  \int_{\tau -1} ^\tau \| \nabla v(\xi, \tau -t,  \omega, v_{\tau -t} ) \|^2  d \xi 
 + \int_{\tau -1}^\tau  z^2 (\xi, \omega)  \|g(\xi, \cdot )\|^2 d\xi
 + \int_{\tau -1} ^\tau z^2 (\xi, \omega) d \xi.
 $$
 Replacing $\omega$ by
 $ \theta_{2, -\tau} \omega$
 in the above,   we  find   
 $$
 \| \nabla v(\tau, \tau -t,     \theta_{2, -\tau} \omega, v_{\tau -t} ) \|^2
 \le
 (1+ c_1)  \int_{\tau -1} ^\tau \| \nabla v(s, \tau -t,  
 \theta_{2, -\tau} \omega , v_{\tau -t} ) \|^2  d s
 $$
 \be\label{plem3_9}
 + \int_{\tau -1}^\tau  z^2 (s,   \theta_{2, -\tau} \omega)  \|g(s, \cdot )\|^2 ds
 + \int_{\tau -1} ^\tau z^2 (s,   \theta_{2, -\tau} \omega) d s.
 \ee
 Note that  for  every   
 $\tau \in \R$   and $\omega \in \Omega$,
\be\label{plem3_10}
\int_{\tau -1} ^\tau z^2 (s,   \theta_{2, -\tau} \omega) d s
=
  z^{-2} ( -\tau, \omega) \int_{\tau -1}^\tau
  z^2(s-\tau, \omega)  ds
=   z^{-2} ( -\tau, \omega)   \int_{-1}^0    z^2(s , \omega)  ds
$$
$$
\le 
    e^\lambda z^{-2} ( -\tau, \omega)   \int_{-1}^0 e^{\lambda s}
   z^2(s , \omega)  ds
   \le
    e^\lambda z^{-2} ( -\tau, \omega)   \int_{- \infty}^0 e^{\lambda s}
   z^2(s , \omega)  ds. 
\ee
Similarly,  one can check 
\be\label{plem3_11}
  \int_{\tau -1}^\tau  z^2 (s,   \theta_{2, -\tau} \omega)  \|g(s, \cdot )\|^2 ds
 \le
    e^\lambda  z^{-2} (-\tau, \omega)
   \int_{-\infty}^0
e^{\lambda s}   z^2(s,  \omega )   \| g(s+\tau, \cdot) \|^2   ds.
\ee
By \eqref{plem3_9}-\eqref{plem3_11} we obtain,   for every
$\tau \in \R$, $\omega \in \Omega$     and $t \in \R^+$,
$$
 \| \nabla v(\tau, \tau -t,     \theta_{2, -\tau} \omega, v_{\tau -t} ) \|^2
 \le
 (1+ c_1)  \int_{\tau -1} ^\tau \| \nabla v(s, \tau -t,  
 \theta_{2, -\tau} \omega , v_{\tau -t} ) \|^2  d s
 $$ 
$$
   +
    e^\lambda z^{-2} ( -\tau, \omega)   \int_{- \infty}^0 e^{\lambda s}
   z^2(s , \omega)  ds
   +
   e^\lambda  z^{-2} (-\tau, \omega)
   \int_{-\infty}^0
e^{\lambda s}   z^2(s,  \omega )   \| g(s+\tau, \cdot) \|^2   ds,
$$
which along with   Corollary \ref{cor2} 
completes    the proof.  \end{proof}

In order to establish   the 
$\cald$-pullback asymptotic compactness
of problem \eqref{rd1}-\eqref{rd2}, we
need to derive  the    uniform estimates on the tails
of solutions  as given below.

  \begin{lem}
\label{lem4}
Suppose  \eqref{f1}-\eqref{f4}  and \eqref{gcon1} hold.
  Let  $\tau \in \R$, $\omega \in \Omega$   and $D=\{D(\tau, \omega)
: \tau \in \R,  \omega \in \Omega\}  \in \cald $.
Then  for every $\eta>0$,   there exist
    $T=T(\tau, \omega,  D, \eta) \ge 1$
 and $K=K(\tau, \omega, \eta) \ge 1$ such that for all $t \ge T$, the solution
 $v$ of equation \eqref{v1}  with $\omega$ replaced by
 $\theta_{2, -\tau} \omega$  satisfies
 \be\label{lem4a1}
 \int_{|x| \ge K}
 |v(\tau, \tau -t, \theta_{2, -\tau} \omega, v_{\tau -t} ) (x)|^2 dx \le \eta,
\ee
 where $v_{\tau -t}\in D(\tau -t, \theta_{2, -t} \omega)$.
\end{lem}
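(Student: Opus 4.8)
The plan is to bound the mass of $v$ outside a large ball by the spatial cut-off technique of \cite{wan5}. Fix a smooth $\rho:[0,\infty)\to[0,1]$ with $\rho(s)=0$ for $s\le1$ and $\rho(s)=1$ for $s\ge2$, so that $\rh$ is supported in $\{|x|\ge k\}$, equals $1$ on $\{|x|\ge\sqrt2\,k\}$, and $|\nabla\rh|\le c/k$ (its gradient lives on $\{k\le|x|\le\sqrt2\,k\}$). First I would multiply \eqref{v1} (with $\omega$ replaced by $\theta_{2,-\tau}\omega$) by $\rh\,v$ and integrate over $\R^n$. The diffusion term yields $\ii\rh|\nabla v|^2\,dx\ge0$ together with the cross term $\ii v\,\nabla\rh\cdot\nabla v\,dx$, bounded by $\tfrac{c}{k}(\|v\|^2+\|\nabla v\|^2)$; using $v=z\,u$ and \eqref{f1}, the nonlinearity contributes $z^2\ii\rh f(x,u)u\,dx\le z^2\int_{|x|\ge k}\psi_1\,dx$ after discarding the sign-definite term $-\alpha_1 z^2\ii\rh|u|^p\,dx\le0$; and Young's inequality splits the forcing into $\tfrac12\lambda\ii\rh|v|^2\,dx$ and $\tfrac1{2\lambda}z^2\int_{|x|\ge k}|g|^2\,dx$.

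This leads to the differential inequality
\[
\tfrac{d}{dt}\ii\rh|v|^2\,dx+\lambda\ii\rh|v|^2\,dx\le\tfrac{c}{k}\bigl(\|v\|^2+\|\nabla v\|^2\bigr)+cz^2\!\int_{|x|\ge k}\!\psi_1\,dx+\tfrac1\lambda z^2\!\int_{|x|\ge k}\!|g|^2\,dx .
\]
I would then multiply by $e^{\lambda t}$ and integrate over $(\tau-t,\tau)$, reproducing the bookkeeping of Lemma~\ref{lem1} (in particular the identities of the type \eqref{plem3_10}). Four kinds of terms arise on the right: (i) the initial datum $e^{-\lambda t}\ii\rh|v_{\tau-t}|^2\,dx\le e^{-\lambda t}\|D(\tau-t,\theta_{2,-t}\omega)\|^2$, which $\to0$ as $t\to\infty$ since $D\in\cald$ is tempered, cf.\ \eqref{attd1}; (ii) the cross term $\tfrac{c}{k}\int_{\tau-t}^\tau e^{\lambda(s-\tau)}\|v(s)\|^2_{\hone}\,ds$, controlled by $\tfrac{c}{k}$ times the uniform bound of Lemma~\ref{lem1} and Corollary~\ref{cor2}; (iii) the $\psi_1$ term, which factors as $\bigl(\int_{|x|\ge k}\psi_1\,dx\bigr)$ times a finite time integral; and (iv) the forcing tail $cz^{-2}(-\tau,\omega)\int_{-\infty}^0 e^{\lambda s}z^2(s,\omega)\int_{|x|\ge k}|g(s+\tau,x)|^2\,dx\,ds$.

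The one step requiring care is (iv), the spatial tail of $g$; this is the main obstacle. I would handle it by dominated convergence: for fixed $\tau,\omega$ the integrand is dominated by $e^{\lambda s}z^2(s,\omega)\|g(s+\tau,\cdot)\|^2$, whose integral over $(-\infty,0)$ is finite exactly as noted after \eqref{plem1_8} (using \eqref{asome} and \eqref{gcon1}), while $\int_{|x|\ge k}|g(s+\tau,x)|^2\,dx\to0$ as $k\to\infty$ for a.e.\ $s$; hence (iv)$\to0$ as $k\to\infty$. Since $\psi_1\in L^1(\R^n)$ forces (iii)$\to0$ and $1/k\to0$ forces (ii)$\to0$, the proof concludes with the usual two-step choice: given $\eta>0$, first pick $K\ge1$ so large that the sum of (ii), (iii) and (iv) is at most $\tfrac12\eta$ for all admissible $t$, then pick $T\ge1$ so large that (i)$\le\tfrac12\eta$ for $t\ge T$. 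Because $\rh=1$ on $\{|x|\ge\sqrt2\,k\}$, this gives $\int_{|x|\ge\sqrt2\,k}|v(\tau)|^2\,dx\le\ii\rh|v(\tau)|^2\,dx\le\eta$, and relabelling $\sqrt2\,k$ as $K$ yields \eqref{lem4a1}.
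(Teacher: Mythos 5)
Your proposal is correct and takes essentially the same route as the paper's own proof: the same cutoff $\rho(|x|^2/k^2)$, the same energy inequality with the $\tfrac{c}{k}\|v\|^2_{\hone}$ cross term, the $\psi_1\in L^1(\R^n)$ and forcing-tail smallness (the paper justifies the latter by finiteness of the full integral, which is your dominated-convergence step), Gronwall, temperedness of $D$ for the initial datum, and Lemma~\ref{lem1}/Corollary~\ref{cor2} to absorb the $H^1$ time integral. The only difference is bookkeeping: the paper fixes $k$ large enough that $c_2/k\le\eta$ before integrating, whereas you carry $c/k$ through Gronwall and choose $K$ and $T$ at the end — an immaterial reordering of quantifiers.
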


\begin{proof}
Let $\rho$ be a smooth function defined on $   \R^+$ such that
$0\le \rho(s) \le 1$ for all $s \in \R^+$, and
$$
\rho (s) = \left \{
\begin{array}{ll}
  0 & \quad \mbox{for} \ 0\le s \le 1; \\
 1 & \quad \mbox{for}  \  s \ge 2.
\end{array}
\right.
$$
By \eqref{v1}  we get 
 $$
 {\frac 12} {\frac d{dt}} \ii \rh |v|^2 dx
 +\lambda \ii \rh |v|^2 dx
 - \ii  \rh v    \Delta v dx
 $$
 \be
 \label{plem4_1}
=  \ii \rh z(t, \omega) f(x, u)  v dx
 + \ii  \rh z(t, \omega)  g   v dx.
 \ee
 Note that 
  \be\label{plem4_2}
     \ii  \rh v  \Delta v  dx
    \le 
    -  \int_{k\le |x| \le \sqrt{2} k} v \rhp {\frac {2x}{k^2}} \cdot \nabla v dx
    \le {\frac {c_1}k} (\| v \|^2 + \| \nabla v \|^2 ).
 \ee
   For the first    term on the right-hand side
   of \eqref{plem4_1},  by \eqref{f1}  we have      
\be\label{plem4_2}
  \ii \rh z(t, \omega) f(x, u)  v dx
  \le
 -    \alpha_1 z^2 (t, \omega)
\ii \rh |u|^p  dx +  z^2 (t, \omega)  \ii \rh  \psi_1 dx.
 \ee
 For the  last term on the right-hand side of \eqref{plem4_1}  we have
\be
\label{plem4_3}
|\ii  \rh z(t, \omega)  g   v dx|
\le 
{\frac 12} \lambda \ii \rh |v|^2 dx
+ {\frac 1{2\lambda} } z^2 (t, \omega) \ii \rh  g^2(t,x)   dx.
\ee
By  \eqref{plem4_1}-\eqref{plem4_3}  we get  
\be\label{plem4_5}
  {\frac d{dt}} \ii \rh |v|^2 dx
+   \lambda \ii \rh |v|^2 dx
\le {\frac {c_2} k}   \| v \|^2_\hone
+
c_2 z^2 (t, \omega)  \ii \rh
\left (   |\psi_1| 
+   g^2 \right ) dx.
\ee
Note that   $\psi_1 \in L^1(\R^n)$.
Therefore,  given $\eta>0$, there exists
$K_1 =K_1(\eta) \ge 1$ such that   for all  $k\ge K_1$,
\be\label{plem4_6}
c_2 \ii \rh |\psi_1|  dx
=c_2 \int_{|x| \ge k} |\psi_1| \rh dx
\le \eta .
\ee
   By  \eqref{plem4_5}-\eqref{plem4_6} 
   we find that 
there exists $K_2=K_2(\eta) \ge K_1$   such that   for all
$k \ge K_2$,
$$
  {\frac d{dt}} \ii \rh |v|^2 dx
+   \lambda \ii \rh |v|^2 dx
\le  \eta    \| v \|^2_\hone +\eta z^2 (t, \omega) 
+ c_2 z^2 (t, \omega)   \int_{|x| \ge k} 
     g^2(t, x)  dx.
$$
By   the Gronwall inequality,  we get for
each $t \in \R^+$, $\tau \in \R$,  $\omega \in \Omega$
and $k \ge K_2$, 
$$
\ii \rh |v(\tau, \tau -t, \omega, v_{\tau -t})|^2 dx
- e^{-\lambda t} \ii \rh |v_{\tau -t} |^2 dx
$$
$$
\le  \eta  
\int_{\tau -t}^\tau e^{\lambda (s-\tau) }
\|v(s, \tau -t, \omega, v_{\tau -t})\|^2_{H^1(\R^n)} ds
 +  \eta   \int_{\tau -t}^\tau e^{\lambda (s-\tau)}
 z^2(s, \omega) ds  
 $$
 \be\label{plem4_7}
+ c_2  \int_{\tau -t}^\tau
\int_{|x| \ge k}  e^{\lambda (s-\tau)}  z^2(s, \omega)  g^2(s, x) dx ds.
\ee
Since  \eqref{plem4_7} is valid for every $\omega \in \Omega$,  we can replace
  $\omega$ by $\theta_{2, -\tau} \omega$ to get,  for 
  $k \ge K_2$, 
  $$
\ii \rh |v(\tau, \tau -t, \theta_{2, -\tau}\omega, v_{\tau -t})|^2 dx
- e^{-\lambda t} \ii \rh |v_{\tau -t} |^2 dx
$$
$$
\le  \eta  
\int_{\tau -t}^\tau e^{\lambda (s-\tau) }
\|v(s, \tau -t, \theta_{2, -\tau}\omega, v_{\tau -t})\|^2_{H^1(\R^n)} ds
 +  \eta   \int_{\tau -t}^\tau e^{\lambda (s-\tau)}
 z^2(s, \theta_{2, -\tau}\omega) ds  
 $$
$$
+ c_2  \int_{\tau -t}^\tau
\int_{|x| \ge k}  e^{\lambda (s-\tau)}  z^2(s, \theta_{2, -\tau}\omega)  g^2(s, x) dx ds
$$
  $$
\le  \eta  
\int_{\tau -t}^\tau e^{\lambda (s-\tau) }
\|v(s, \tau -t, \theta_{2, -\tau}\omega, v_{\tau -t})\|^2_{H^1(\R^n)} ds
 +  \eta z^{-2} (-\tau, \omega)   \int_{-\infty}^0 e^{\lambda s }
 z^2(s, \omega) ds  
 $$
\be\label{plem4_10}
+ c_2 z^{-2} (-\tau, \omega)  \int_{-\infty}^0
\int_{|x| \ge k}  e^{\lambda s }  z^2(s,  \omega)  g^2(s +\tau, x) dx ds.
\ee
  Since $v_{\tau -t}  \in D(\tau -t, \theta_{2, -t} \omega)$
and $D \in \cald $ we see    that 
there exists $T_1=T_1(\tau, \omega, D, \eta) >0$
such that  for all $t \ge T_1$, 
\be\label{plem4_11}
e^{-\lambda t} \ii \rh |v_{\tau -t} (x)|^2 dx
\le
 e^{-\lambda t}     \|v_{\tau -t}  \|^2
 \le
 e^{-\lambda t}     \| D(\tau -t, \theta_{2, -t} \omega)  \|^2
       \le \eta.
 \ee
 On the other hand, by \eqref{asome} and \eqref{gcon1} one can check
 $$
  \int_{-\infty}^0
\ii   e^{\lambda s }  z^2(s,  \omega)  g^2(s +\tau, x) dx ds
=
 \int_{-\infty}^0
\ii   e^{(\lambda -\delta)  s }  z^2(s,  \omega)  e^{\delta s}  g^2(s +\tau, x) dx ds
< \infty,
 $$
 which implies  that there exists 
 $K_3 =K_3(\tau, \omega, \eta) \ge K_2$ such that
 for all $k \ge K_3$, 
 \be\label{plem4_12}
   c_2 z^{-2} (-\tau, \omega)  \int_{-\infty}^0
\int_{|x| \ge k}  e^{\lambda s }  z^2(s,  \omega)  g^2(s +\tau, x) dx ds
\le \eta.
\ee
It follows   from \eqref{plem4_10}-\eqref{plem4_12}
and Lemma \ref{lem1}
that there exists  $T_2=T_2(\tau, \omega, D, \eta)  \ge T_1$
such that  for all $t \ge T_2$  and $k \ge K_3$, 
$$
 \ii \rh |v(\tau, \tau -t, \theta_{2, -\tau}\omega, v_{\tau -t})|^2 dx
\le
  c \eta + c\eta  z^{-2} (-\tau, \omega)
    \int_{-\infty }^0 e^{\lambda s}  z^2(s,  \omega ) ds 
$$
\be\label{plem4_15}
 +     c\eta  z^{-2} (-\tau, \omega)
   \int_{-\infty}^0
e^{\lambda s}   z^2(s,  \omega )   \| g(s+\tau, \cdot) \|^2   ds.
 \ee
Since $ \rho(s) = 1$  for $s  \ge 2$ we have
    \be\label{plem4_20}
\int_{|x| \ge \sqrt{2} k} 
  |v(\tau, \tau -t, \theta_{2, -\tau}\omega, v_{\tau -t})|^2 dx
\le
\ii \rh |v(\tau, \tau -t, \theta_{2, -\tau}\omega, v_{\tau -t})|^2 dx.
\ee
By \eqref{plem4_15}-\eqref{plem4_20}  we get
\eqref{lem4a1}.  This completes    the proof. 
\end{proof}

\section{Tempered   Attractors  for 
Reaction-Diffusion Equations  }
 \setcounter{equation}{0}
 
 In   this  section,  we  prove    the existence of tempered 
 random  attractors
 for problem  \eqref{rd1}-\eqref{rd2}. 
 We first derive uniform estimates
 for the cocycle $\Phi$   as defined by
 \eqref{rdphi}.

\begin{lem}
\label{lemu1}
 Suppose  \eqref{f1}-\eqref{f4}  and \eqref{gcon1} hold.
Then for every $\tau \in \R$, $\omega \in \Omega$   
and $D=\{D(\tau, \omega)
: \tau \in \R,  \omega \in \Omega\}  \in \cald$,
 there exists  $T=T(\tau, \omega,  D) \ge 1$ such that 
 for all $t \ge T$,
 \be\label{lemu1a1}
\| u(\tau, \tau -t,  \theta_{2, -\tau} \omega, u_{\tau -t}  ) \|^2 _\hone
 \le   c    \int_{-\infty}^0
e^{\lambda s}   z^2(s,  \omega ) 
 \left (1 +   \| g(s+\tau, \cdot) \|^2  \right )  ds, 
\ee
 where $u_{\tau -t}\in D(\tau -t, \theta_{2, -t} \omega)$ and
  $c$ is a  positive constant  independent of $\tau$, $\omega$, $D$ 
  and $\alpha$.
\end{lem}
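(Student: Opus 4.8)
The plan is to deduce this $H^1$ bound on the physical variable $u$ directly from the estimates on $v$ already proved in Lemmas \ref{lem1} and \ref{lem3}, since $u$ and $v$ differ only by the spatially constant factor $z$. From the change of variable \eqref{vu} one has $u(t,\tau,\omega,u_\tau) = z^{-1}(t,\omega)\, v(t,\tau,\omega,v_\tau)$, and because $z(t,\omega)=e^{-\alpha\omega(t)}$ does not depend on $x$, it factors out of every spatial derivative; hence $\|u\|_{\hone}^2 = z^{-2}\|v\|_{\hone}^2$ at each fixed time. Evaluating at time $\tau$ with the driving noise $\theta_{2,-\tau}\omega$ gives
\[
\|u(\tau,\tau-t,\theta_{2,-\tau}\omega,u_{\tau-t})\|_{\hone}^2 = z^{-2}(\tau,\theta_{2,-\tau}\omega)\,\|v(\tau,\tau-t,\theta_{2,-\tau}\omega,v_{\tau-t})\|_{\hone}^2 .
\]

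Next I would identify the scalar prefactor explicitly. Using the definition of the shift \eqref{rdshift1} together with $\omega(0)=0$, one computes $(\theta_{2,-\tau}\omega)(\tau)=\omega(0)-\omega(-\tau)=-\omega(-\tau)$, so that $z(\tau,\theta_{2,-\tau}\omega)=e^{\alpha\omega(-\tau)}=z^{-1}(-\tau,\omega)$ and therefore $z^{-2}(\tau,\theta_{2,-\tau}\omega)=z^{2}(-\tau,\omega)$. It then remains to bound $\|v\|_{\hone}^2$: Lemma \ref{lem1} controls $\|v(\tau,\tau-t,\theta_{2,-\tau}\omega,v_{\tau-t})\|^2$ and Lemma \ref{lem3} controls $\|\nabla v(\tau,\tau-t,\theta_{2,-\tau}\omega,v_{\tau-t})\|^2$, each by the same quantity $c\,z^{-2}(-\tau,\omega)\int_{-\infty}^0 e^{\lambda s}z^2(s,\omega)(1+\|g(s+\tau,\cdot)\|^2)\,ds$, valid for all $t\ge T$ once $T$ is chosen as the larger of the two thresholds from those lemmas (and $\ge 1$). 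Adding the two estimates yields the same bound for $\|v\|_{\hone}^2=\|v\|^2+\|\nabla v\|^2$.

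Substituting this into the identity above, the prefactor $z^2(-\tau,\omega)$ cancels exactly against the $z^{-2}(-\tau,\omega)$ produced by the $v$-estimates, leaving precisely \eqref{lemu1a1}; the constant $c$ stays independent of $\tau$, $\omega$, $D$ and $\alpha$ because it was already so in Lemmas \ref{lem1} and \ref{lem3}. There is no genuine analytic difficulty here, all of that having been absorbed into the earlier $v$-bounds. The one step that must be handled carefully is this cancellation of the $z^{\pm2}(-\tau,\omega)$ factors, which rests entirely on the cocycle identity $z(\tau,\theta_{2,-\tau}\omega)=z^{-1}(-\tau,\omega)$; getting that conversion wrong would leave a surviving tempered factor and destroy the uniformity in $\tau$.
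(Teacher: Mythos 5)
Your algebraic skeleton is correct and is exactly the paper's closing step: since $z$ is spatially constant, $\|u(\tau,\cdot)\|^2_{\hone}=z^{2}(-\tau,\omega)\,\|v(\tau,\cdot)\|^2_{\hone}$ via the identity $z(\tau,\theta_{2,-\tau}\omega)=z^{-1}(-\tau,\omega)$, and summing the bounds of Lemmas \ref{lem1} and \ref{lem3} makes the $z^{\pm2}(-\tau,\omega)$ factors cancel, leaving \eqref{lemu1a1}.

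However, there is a genuine gap at the point where you invoke Lemmas \ref{lem1} and \ref{lem3}. Both lemmas assume their initial data satisfies $v_{\tau-t}\in D(\tau-t,\theta_{2,-t}\omega)$ for some tempered family $D\in\cald$; temperedness is used essentially in their proofs to absorb the term $e^{-\lambda t}\|v_{\tau-t}\|^2$, and the threshold $T$ they produce depends on that family. In Lemma \ref{lemu1} the hypothesis is placed on $u_{\tau-t}$, not on $v_{\tau-t}$: the initial datum of the $v$-problem is $v_{\tau-t}=z(\tau-t,\theta_{2,-\tau}\omega)\,u_{\tau-t}$, and the scalar factor $z(\tau-t,\theta_{2,-\tau}\omega)=e^{-\alpha(\omega(-t)-\omega(-\tau))}$ is unbounded as $t\to\infty$, so the family of sets containing $v_{\tau-t}$ is \emph{not} the family $D$ and its temperedness is not automatic. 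This transfer of temperedness is precisely what the paper's proof spends its effort on: it introduces $\bar D(\tau,\omega)=\{v\in\ltwo:\ \|v\|\le z^{-1}(-\tau,\omega)\|D(\tau,\omega)\|\}$, verifies via the sublinear growth \eqref{asome} of $\omega$ that $\bar D\in\cald$, checks that $v_{\tau-t}\in\bar D(\tau-t,\theta_{2,-t}\omega)$, and only then applies Lemmas \ref{lem1} and \ref{lem3} with $\bar D$ in place of $D$ (so that $T$ still depends only on $\tau$, $\omega$, $D$). Consequently the step you single out as the delicate one — the cancellation of $z^{\pm2}(-\tau,\omega)$ — is routine; the step that actually requires care, and that your proposal omits, is showing that the rescaled initial data lie in a tempered family so that the earlier lemmas are applicable at all.
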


  \begin{proof}
Given $D=\{D(\tau, \omega): \tau \in \R, \omega \in \Omega\}
 \in \cald$,  define a new family  ${\bar{D}}$ for $D$  as follows:
 \be
 \label{plemu1_1}
 {\bar{D}} =  \left \{ {\bar{D}}(\tau, \omega): \ 
{\bar{D}}(\tau, \omega)
 = \{ v \in  \ltwo :  \| v \|
 \le  z^{-1} (- \tau,  \omega)  \| D(\tau, \omega) \| \},
  \tau \in \R, \omega \in \Omega  \right \}.
 \ee
 Since $D \in \cald$,   by   \eqref{asome} one can check 
  ${\bar{D}} $  also belongs to $D$, i.e., 
 $\bar{D}$ is tempered.
  Since $u_{\tau -t}\in D(\tau -t, \theta_{2, -t} \omega)$, 
  we find that  $v_{\tau -t} = z(\tau-t,  \theta_{2, -\tau} \omega)u_{\tau -t}$
  satisfies  
\be\label{plemu1_2}
 \| v_{\tau -t} \|
= \| z(\tau -t, \theta_{2, -\tau} \omega )  u_{\tau -t} \| 
\le   z^{-1} (t -\tau,  \theta_{2, -t} \omega)
 \ \| D(\tau -t, \theta_{2, -t} \omega)\|.
\ee
By \eqref{plemu1_1}-\eqref{plemu1_2}  we   see   that 
$v_{\tau -t} \in {\tilde{D}} (\tau -t, \theta_{2, -t} \omega )$.
Since ${\tilde{D}} \in  \cald $,  by  Lemmas \ref{lem1} and \ref{lem3},  
  there  exists  $T= T(\tau, \omega, D) \ge 1$   such that
for all $ t \ge T$,
$$
\| v(\tau  , \tau -t,  \theta_{2, -\tau} \omega, v_{\tau -t}  ) \|^2 _\hone
$$
\be\label{plemu1_5}
\le 
  c z^{-2} (-\tau, \omega)
   \int_{-\infty}^0
e^{\lambda s}   z^2(s,  \omega )   \| g(s+\tau, \cdot) \|^2   ds
 +   c  z^{-2} (-\tau, \omega)
    \int_{-\infty }^0 e^{\lambda s}  z^2(s,  \omega ) ds.
\ee
Note that \eqref{vu} implies
 $$
  v(\tau  , \tau -t,  \theta_{2, -\tau} \omega, v_{\tau -t}  )
  = z(\tau,   \theta_{2, -\tau} \omega ) \
     u (\tau  , \tau -t,  \theta_{2, -\tau} \omega, u_{\tau -t}  )
  = z^{-1} ( - \tau,    \omega ) \
    u  (\tau  , \tau -t,  \theta_{2, -\tau} \omega, u_{\tau -t}  ), 
   $$
 which along with \eqref{plemu1_5} completes   the proof.
\end{proof}

 By an argument similar to Lemma \ref{lemu1}, one can  establish  the
 following uniform estimates on the tails of solutions of problem
 \eqref{rd1}-\eqref{rd2}.
 
  \begin{lem}
\label{lemu2}
Suppose  \eqref{f1}-\eqref{f4}  and \eqref{gcon1} hold.
  Let  $\tau \in \R$, $\omega \in \Omega$   and $D=\{D(\tau, \omega)
: \tau \in \R,  \omega \in \Omega\}  \in \cald $.
Then  for every $\eta>0$,   there exist
    $T=T(\tau, \omega,  D, \eta) \ge 1$
 and $K=K(\tau, \omega, \eta) \ge 1$ such that for all $t \ge T$,  
 \be\label{lemu2a1}
 \int_{|x| \ge K}
 |u(\tau, \tau -t, \theta_{2, -\tau} \omega, u_{\tau -t} ) (x)|^2  dx \le \eta,
\ee
 where $u_{\tau -t}\in D(\tau -t, \theta_{2, -t} \omega)$.
\end{lem}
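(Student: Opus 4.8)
The plan is to transfer the tail estimate for the transformed variable $v$, already established in Lemma \ref{lem4}, to the original variable $u$ through the change of variables \eqref{vu}, exactly as the $\hone$-estimate of Lemma \ref{lemu1} was deduced from Lemmas \ref{lem1} and \ref{lem3}. The essential observation is that, by \eqref{vu} together with the identity $z(\tau, \theta_{2,-\tau}\omega) = z^{-1}(-\tau,\omega)$ (which follows from \eqref{rdshift1} and $z(t,\omega)=e^{-\alpha\omega(t)}$), one has
\be
u(\tau, \tau - t, \theta_{2, -\tau}\omega, u_{\tau - t}) = z(-\tau, \omega)\, v(\tau, \tau - t, \theta_{2, -\tau}\omega, v_{\tau - t}),
\ee
so that the tail of $u$ is a fixed multiple of the tail of $v$:
\be
\int_{|x| \ge K} |u(\tau, \tau - t, \theta_{2, -\tau}\omega, u_{\tau - t})(x)|^2\, dx = z^2(-\tau, \omega)\int_{|x| \ge K} |v(\tau, \tau - t, \theta_{2, -\tau}\omega, v_{\tau - t})(x)|^2\, dx.
\ee

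First I would introduce, as in \eqref{plemu1_1}, the rescaled family $\bar{D}$ whose fibres have radius $z^{-1}(-\tau,\omega)\|D(\tau,\omega)\|$; by \eqref{asome} this family is again tempered, i.e. $\bar{D}\in\cald$, and the data $v_{\tau - t} = z(\tau - t, \theta_{2, -\tau}\omega)\, u_{\tau - t}$ satisfy $v_{\tau - t}\in\bar{D}(\tau - t, \theta_{2, -t}\omega)$ by the same computation as in \eqref{plemu1_2}. This membership is precisely what allows Lemma \ref{lem4} to be applied to $v$ with $\bar{D}$ playing the role of $D$.

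Given the target $\eta>0$, I would then set $\eta' = \eta\, z^2(-\tau,\omega)^{-1}$, noting that $z^2(-\tau,\omega)=e^{-2\alpha\omega(-\tau)}$ is a fixed positive number depending only on $\tau$ and $\omega$ but not on $t$, and apply Lemma \ref{lem4} to the tempered family $\bar{D}$ with tolerance $\eta'$. This yields $T=T(\tau,\omega,D,\eta)\ge 1$ and $K=K(\tau,\omega,\eta)\ge 1$ such that the right-hand side of the second display above is at most $z^2(-\tau,\omega)\,\eta' = \eta$ for all $t\ge T$, which is exactly \eqref{lemu2a1}.

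Since every nontrivial estimate is already packaged inside Lemma \ref{lem4}, there is no genuine analytic obstacle here. The only points demanding care are the bookkeeping identity $z(\tau,\theta_{2,-\tau}\omega)=z^{-1}(-\tau,\omega)$ and the verification that $\bar{D}$ remains tempered, so that Lemma \ref{lem4} is legitimately applicable; both are handled in the same way as in the proof of Lemma \ref{lemu1}, and the proof can be compressed accordingly.
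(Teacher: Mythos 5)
Your proposal is correct and follows exactly the route the paper takes: its proof of Lemma \ref{lemu2} is precisely ``follow the proof of Lemma \ref{lemu1} and deduce \eqref{lemu2a1} directly from Lemma \ref{lem4},'' i.e.\ transfer the tail estimate from $v$ to $u$ via \eqref{vu}, the identity $z(\tau,\theta_{2,-\tau}\omega)=z^{-1}(-\tau,\omega)$, and the tempered rescaled family $\bar{D}$. You have simply written out the bookkeeping the paper omits, and all of it (the membership $v_{\tau-t}\in\bar{D}(\tau-t,\theta_{2,-t}\omega)$, the rescaled tolerance $\eta'=\eta\,z^{-2}(-\tau,\omega)$, and the resulting dependence of $T$ and $K$ on the stated parameters) checks out.
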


\begin{proof}
Following the proof of Lemma \ref{lemu1},  one  can obtain 
  \eqref{lemu2a1}  from   
  Lemma \ref{lem4}  directly. The details are omitted.
\end{proof}

\begin{lem}
\label{lemu3}
 Suppose  \eqref{f1}-\eqref{f4}  and \eqref{gcon2} hold. 
 Then the continuous cocycle $\Phi$ associated with
 problem \eqref{rd1}-\eqref{rd2} has a closed measurable
 $\cald$-pullback absorbing set
 $K \in \cald$ which is given by,  for each $
 \tau \in \R$   and $\omega \in \Omega$,
 \be
 \label{rdabs1}
  K(\tau, \omega) = \{ u\in  \ltwo : \|u \|^2
 \le  M(\tau, \omega)  \},
 \ee
 where  $M(\tau, \omega)$ 
 is  the number given by
 the right-hand side of \eqref{lemu1a1}.
\end{lem}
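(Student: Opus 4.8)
The plan is to verify, one at a time, the defining properties of a closed measurable $\cald$-pullback absorbing set for the family $K$ in \eqref{rdabs1}, namely the absorption inclusion, closedness, temperedness ($K\in\cald$), and measurability; of these the temperedness is the only substantial point. First I would check absorption. From the cocycle formula \eqref{rdphi} together with the flow property of $\theta_2$ one has the identity $\Phi(t,\tau-t,\theta_{2,-t}\omega,u_{\tau-t}) = u(\tau,\tau-t,\theta_{2,-\tau}\omega,u_{\tau-t})$, since $\theta_{2,-(\tau-t)}\circ\theta_{2,-t}=\theta_{2,-\tau}$. Because $\Omega_1=\R$ with $\theta_{1,-t}\tau=\tau-t$, pulling back by $t$ turns the left-hand side of the absorption inclusion into precisely the quantity bounded in Lemma \ref{lemu1}. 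Thus, given any $D\in\cald$ and $u_{\tau-t}\in D(\tau-t,\theta_{2,-t}\omega)$, Lemma \ref{lemu1} supplies $T=T(\tau,\omega,D)\ge 1$ with $\|\Phi(t,\tau-t,\theta_{2,-t}\omega,u_{\tau-t})\|^2\le M(\tau,\omega)$ for all $t\ge T$, i.e. the pullback image lies in $K(\tau,\omega)$. Nonemptiness and closedness of $K(\tau,\omega)$ are immediate, as it is a closed ball in $\ltwo$.

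The crux is to show $K\in\cald$, namely that $K$ obeys the temperedness condition \eqref{attd1}. Writing $z(s,\omega)=e^{-\alpha\omega(s)}$, the shift relation $z(s,\theta_{2,t}\omega)=z(s+t,\omega)\,z^{-1}(t,\omega)$ and the substitution $r=s+t$ convert the defining integral into
\[ M(\tau+t,\theta_{2,t}\omega)=c\,z^{-2}(t,\omega)\,e^{-\lambda t}\int_{-\infty}^{t}e^{\lambda r}z^2(r,\omega)\bigl(1+\|g(r+\tau,\cdot)\|^2\bigr)\,dr. \]
I would then split the integral into its constant and forcing parts and, for a prescribed rate $\gamma>0$, fix $\epsilon$ with $0<\epsilon<\min\{\gamma,\lambda-\delta\}/(2\alpha)$. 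For the constant part, \eqref{asome} gives $z^2(r,\omega)\le e^{-2\alpha\epsilon r}$ and $z^{-2}(t,\omega)\le e^{-2\alpha\epsilon t}$ for $t,r$ sufficiently negative, whence $e^{2\gamma t}$ times this part is dominated by a multiple of $e^{(2\gamma-4\alpha\epsilon)t}\to 0$. For the forcing part, after bounding the $z$-factors by the same small weights and separating off the factor $e^{\delta r}$, the change of variables $s=r-t$ reduces the remaining integral to $e^{\delta t}\int_{-\infty}^{0}e^{\delta s}\|g(s+(t+\tau),\cdot)\|^2\,ds$, and the tempered hypothesis \eqref{gcon2} (which is exactly why \eqref{gcon2} rather than the weaker \eqref{gcon1} is assumed here) forces $e^{2\gamma t}$ times this contribution to zero as well. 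Adding the two pieces gives $\lim_{t\to-\infty}e^{2\gamma t}M(\tau+t,\theta_{2,t}\omega)=0$ for every $\gamma>0$, which is \eqref{attd1} for $K$.

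Finally, measurability follows because $\omega\mapsto M(\tau,\omega)$ is $\calf$-measurable, being an $s$-integral of the continuous (hence measurable) integrand $e^{\lambda s}z^2(s,\omega)(1+\|g(s+\tau,\cdot)\|^2)$; since $K(\tau,\omega)$ is the ball of radius $\sqrt{M(\tau,\omega)}$ about the origin, $d(x,K(\tau,\omega))=(\|x\|_{\ltwo}-\sqrt{M(\tau,\omega)})^{+}$ is $\calf$-measurable in $\omega$ for each fixed $x$, and a fortiori measurable with respect to the $P$-completion of $\calf$. I expect the temperedness estimate of the second paragraph to be the main obstacle: one must simultaneously control the growing prefactor $e^{-\lambda t}$ and the subexponentially growing $z^{-2}(t,\omega)$ against the decaying weight $e^{\lambda r}$ inside the integral and against the forcing term, and it is precisely this balance that \eqref{asome} and \eqref{gcon2} are tailored to deliver.
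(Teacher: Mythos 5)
Your proof is correct and takes essentially the same route as the paper's: absorption follows from Lemma \ref{lemu1} via the cocycle identity, closedness and measurability from the measurability of $\omega \mapsto M(\tau,\omega)$, and temperedness of $K$ from \eqref{asome} and \eqref{gcon2}. The paper compresses the temperedness step into ``after some calculations, one can check''; your second paragraph (shift relation for $z$, substitution $r=s+t$, splitting into constant and forcing parts, and the choice $0<\epsilon<\min\{\gamma,\lambda-\delta\}/(2\alpha)$) supplies exactly those calculations, correctly.
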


\begin{proof} 
 It is clear that for each $\tau \in \R$,  $M(\tau, \cdot): \Omega \to \R$
is   $(\calf, \calb(\R))$-measurable, and hence 
$K(\tau, \cdot): \Omega \to 2^H$ is a  measurable set-valued mapping.
By \eqref{asome} and \eqref{gcon2},
 after some calculations,  one can  check that
$K= \{ K(\tau, \omega): \tau \in \R,  \omega \in \Omega \}$ is tempered,
i.e., $K \in \cald$.
This along with Lemma \ref{lemu1} shows   that
$K$ is a closed measurable
 $\cald$-pullback absorbing set
 for $\Phi$ in $\cald$.
  \end{proof}

  We now prove   the $\cald$-pullback   asymptotic 
  compactness of  solutions of problem \eqref{rd1}-\eqref{rd2}.

 \begin{lem}
 \label{lemu4}
 Suppose  \eqref{f1}-\eqref{f4}  and \eqref{gcon2} hold. 
 Then the continuous cocycle $\Phi$ associated with
 problem \eqref{rd1}-\eqref{rd2}  
  is $\cald$-pullback
 asymptotically compact  in $\ltwo$,
that is, for  every $\tau \in \R$, $\omega \in \Omega$, 
 $D   \in \cald$,
  $t_n \to \infty$ and 
 $u_{0,n}  \in D(\tau -t_n, \theta_{2, -t_n} \omega )$,  the sequence
 $\Phi(t_n, \tau -t_n,  \theta_{2, -t_n} \omega,   u_{0,n}  ) $   has a
   convergent
subsequence in $\ltwo $.
\end{lem}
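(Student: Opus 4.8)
The plan is to split $\R^n$ into a large ball and its complement, using the uniform $\hone$ estimate of Lemma \ref{lemu1} to obtain compactness on the ball via the Rellich--Kondrachov theorem, and the uniform tail estimate of Lemma \ref{lemu2} to make the contribution outside the ball negligible. The only genuine obstruction is that $\hone$ does not embed compactly into $\ltwo$ on the unbounded domain $\R^n$; the tail estimate is exactly what repairs this.

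First I would recast the sequence in a form to which the estimates of Sections 5--6 apply. By the cocycle identity \eqref{rdphi} together with the group property $\theta_{2,-(\tau-t_n)} \circ \theta_{2,-t_n} = \theta_{2,-\tau}$, one has
$$
\Phi(t_n, \tau - t_n, \theta_{2,-t_n}\omega, u_{0,n})
= u(\tau, \tau - t_n, \theta_{2,-\tau}\omega, u_{0,n}),
$$
which I abbreviate by $u_n$. By Lemma \ref{lemu1} there is $T = T(\tau, \omega, D) \ge 1$ such that $\{u_n\}$ is bounded in $\hone$ once $t_n \ge T$, which holds for all large $n$ since $t_n \to \infty$. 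As $\hone$ is a Hilbert space, after passing to a subsequence (not relabeled) there is $\bar u \in \hone$ with $u_n \rightharpoonup \bar u$ weakly in $\hone$, and in particular weakly in $\ltwo$.

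It remains to upgrade weak to strong convergence in $\ltwo$. Fix $\eta > 0$ and set $\mathcal{O}_K = \{x \in \R^n : |x| \le K\}$. By Lemma \ref{lemu2} there exist $T_1 \ge T$ and $K \ge 1$ with $\int_{|x| \ge K} |u_n|^2\, dx \le \eta$ for all $t_n \ge T_1$; enlarging $K$ if necessary, we may also arrange $\int_{|x| \ge K} |\bar u|^2\, dx \le \eta$, since $\bar u \in \ltwo$. On $\mathcal{O}_K$ the sequence $\{u_n\}$ is bounded in $H^1(\mathcal{O}_K)$ and converges weakly there, so by the compact embedding $H^1(\mathcal{O}_K) \hookrightarrow L^2(\mathcal{O}_K)$ it converges strongly in $L^2(\mathcal{O}_K)$; thus $\| u_n - \bar u \|_{L^2(\mathcal{O}_K)} \to 0$. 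Splitting the norm over $\mathcal{O}_K$ and its complement,
$$
\| u_n - \bar u \|^2_{\ltwo}
\le \| u_n - \bar u \|^2_{L^2(\mathcal{O}_K)}
+ 2\int_{|x| \ge K} |u_n|^2\, dx
+ 2\int_{|x| \ge K} |\bar u|^2\, dx,
$$
so that $\limsup_{n \to \infty} \| u_n - \bar u \|^2_{\ltwo} \le 4\eta$. As $\eta > 0$ is arbitrary, $u_n \to \bar u$ strongly in $\ltwo$, which produces the required convergent subsequence and establishes the $\cald$-pullback asymptotic compactness. The substantive work has already been carried out in Lemmas \ref{lemu1} and \ref{lemu2}; what remains here is only the ball/complement decomposition and an application of Rellich--Kondrachov, and I expect no serious obstacle beyond the non-compactness already handled by the tail estimate.
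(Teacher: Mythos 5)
Your proof is correct and takes essentially the same route as the paper: both rely on the ball/complement splitting of $\R^n$, the uniform $\hone$ bound of Lemma \ref{lemu1}, the tail estimate of Lemma \ref{lemu2}, and the compact embedding $H^1(Q_K) \hookrightarrow L^2(Q_K)$. The only cosmetic difference is the endgame --- the paper concludes precompactness by exhibiting a finite $\varepsilon$-net, while you extract a weak $\hone$ limit and upgrade it to a strong $\ltwo$ limit --- and both steps are routine once the two lemmas are in hand.
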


 \begin{proof}
 Given $K>0$,   let 
$ {Q}_K = \{ x \in \R^n:  |x|     \le K \}$
and  ${Q}^c_K = \R^n\setminus Q_K$. 
  By Lemma \ref{lemu2} we find   that
  for every $\varepsilon>0$,  $\tau \in \R$ and $\omega
  \in \Omega$,  there exist
  $K= K(\tau, \omega, \varepsilon) \ge 1$  
  and $N_1=N_1(\tau, \omega,  D, \varepsilon) \ge 1$
such that for  all $n \ge N_1$,
\be\label{plemu4_1}
\|   \Phi  (t_n  ,  \tau -t_n  ,  \theta_{2, -t_n} \omega,  
 u_{0, n}  )  \| _{L^2 ({Q}^c_{K} )
   }
\le \frac{\varepsilon}{2}.
\ee
On the other hand, 
By Lemma   \ref{lemu1}     there
exists $N_2=N_2(\tau, \omega, D, \varepsilon) \ge N_1$
such that for all $n \ge N_2 $,
$$
\|   \Phi  (t_n  ,  \tau -t_n  , 
\theta_{2, -t_n} \omega,  u_{0,n}    )  \| _{H^1( {Q}_{K } )
} \le  L(\tau, \omega),
$$
where $L(\tau, \omega)$ is a positive constant.
Then 
  the compact  embedding
$ H^1( {Q}_{K } )  \hookrightarrow  L^2 ( {Q}_{K } )$
together with  \eqref{plemu4_1}
implies 
   $  \{ \Phi  (t_n  ,  \tau -t_n  , \theta_{2, -t_n} \omega, 
    u_{0,n}    )\}_{n=1}^\infty $  has 
   a finite covering in
$ \ltwo   $ of balls of radii less than $\varepsilon$, and hence
$  \{ \Phi  (t_n  ,  \tau -t_n  , \theta_{2, -t_n} \omega, 
    u_{0,n}    )\}_{n=1}^\infty $ is precompact in $\ltwo$.
 \end{proof}

  We  are now  ready to present    the existence of 
  tempered pullback attractors for
  problem \eqref{rd1}-\eqref{rd2}.  
  
  \begin{thm}
\label{attrd1}
 Suppose  \eqref{f1}-\eqref{f4}  and \eqref{gcon2} hold. 
 Then the continuous cocycle $\Phi$ associated with
 problem \eqref{rd1}-\eqref{rd2}  
   has a unique $\cald$-pullback attractor $\cala
   =\{\cala(\tau, \omega):
      \tau \in \R, \ \omega \in \Omega \} \in \cald$
 in $\ltwo$.  Moreover,
 for each $\tau  \in \R$   and
$\omega \in \Omega$,
\be\label{attrd1_1}
\cala (\tau, \omega)
=\Omega(K, \tau, \omega)
=\bigcup_{B \in \cald} \Omega(B, \tau, \omega)
\ee
\be\label{attrd1_2}
 =\{\psi(0, \tau, \omega): \psi \mbox{ is a   }  \cald {\rm -}
 \mbox{complete orbit of } \Phi\} .
\ee
\end{thm}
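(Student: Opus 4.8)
The plan is to invoke the abstract existence-and-uniqueness criterion, Proposition \ref{att}, which reduces the entire theorem to verifying exactly two hypotheses for the cocycle $\Phi$ over the collection $\cald$: that $\Phi$ is $\cald$-pullback asymptotically compact in $\ltwo$, and that $\Phi$ admits a closed measurable $\cald$-pullback absorbing set $K \in \cald$. Both of these facts have already been established in the preceding lemmas, so the proof is essentially an assembly. First I would recall that $\cald$, as defined in \eqref{drd}, is neighborhood closed (noted right after its definition), so the hypotheses of Proposition \ref{att} concerning $\cald$ are met.

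Next I would cite Lemma \ref{lemu3}, which under \eqref{gcon2} produces the explicit closed measurable $\cald$-pullback absorbing set $K(\tau,\omega) = \{u \in \ltwo : \|u\|^2 \le M(\tau,\omega)\}$ with $M$ given by the right side of \eqref{lemu1a1}; that lemma already verifies $K \in \cald$ (temperedness via \eqref{asome} and \eqref{gcon2}), its measurability with respect to the $P$-completion of $\calf$, and its absorbing property. Then I would cite Lemma \ref{lemu4}, which establishes $\cald$-pullback asymptotic compactness of $\Phi$ in $\ltwo$ under the same hypotheses \eqref{f1}--\eqref{f4} and \eqref{gcon2}. With both hypotheses of Proposition \ref{att} in hand, the proposition immediately yields the existence and uniqueness of the $\cald$-pullback attractor $\cala \in \cald$.

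Finally, the characterizations \eqref{attrd1_1} and \eqref{attrd1_2} are not separate claims to be proved but are precisely the structural formulas \eqref{attform1}--\eqref{attform2} delivered by Proposition \ref{att}, specialized to the present parametric setting where $\Omega_1 = \R$ (with $\tau$ playing the role of $\omega_1$) and $\Omega_2 = \Omega$. I would simply state that the representation of $\cala(\tau,\omega)$ as $\Omega(K,\tau,\omega)$, as the union of $\Omega$-limit sets over all $B \in \cald$, and as the set of time-zero values of $\cald$-complete orbits all follow directly from \eqref{attform1}--\eqref{attform2}.

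In this instance there is no genuine obstacle left at the level of this theorem: all the analytic work—the uniform estimates, the tail estimates overcoming non-compactness of Sobolev embeddings on $\R^n$, the construction of the absorbing set, and the asymptotic compactness argument—has been front-loaded into Lemmas \ref{lem1}--\ref{lemu4}. The only point requiring a moment of care is confirming that the strengthened condition \eqref{gcon2} (rather than merely \eqref{gcon1}) is what guarantees $K \in \cald$, i.e., that the absorbing set itself is tempered; this is exactly why Lemmas \ref{lemu3}, \ref{lemu4} and this theorem assume \eqref{gcon2} while the earlier a priori estimates needed only \eqref{gcon1}.
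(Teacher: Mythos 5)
Your proposal is correct and follows exactly the paper's own argument: the paper proves this theorem in one line as an immediate consequence of Lemmas \ref{lemu3}, \ref{lemu4} and Proposition \ref{att}, which is precisely the assembly you describe (your additional remarks about neighborhood closedness of $\cald$ and the role of \eqref{gcon2} in temperedness of the absorbing set are accurate elaborations of what the paper leaves implicit).
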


\begin{proof}
This is an immediate consequence of
 Lemmas \ref{lemu3}, \ref{lemu4}
 and    Proposition \ref{att}.
\end{proof}

    We now  consider   the
    periodicity of the $\cald$-pullback
    attractor obtained in Theorem \ref{attrd1}. 
    Suppose   $g: \R \to  \ltwo$
  is    $T$-periodic  for  
some  $T>0$  and 
   $ g \in L^2  ((0,T), \ltwo)$.
   In this case,      $g$  satisfies  
  \eqref{gcon2}
  for  any $\delta>0$
  and  the cocycle $\Phi$ is $T$-periodic.
  Indeed,  
for every 
 $t \in \R^+$, $\tau \in \R$ and $\omega \in \Omega$,
   we  have     
$$
\Phi (t, \tau +T, \omega, \cdot )
= u(t+ \tau +T, \tau +T,  \theta_{2, -\tau -T} \omega, \cdot)
=u(t +\tau, \tau, \theta_{2, -\tau} \omega, \cdot  )
= \Phi (t, \tau,  \omega,  \cdot ). $$
On the other hand,
by   \eqref{lemu1a1} we find that
the $\cald$-pullback absorbing set
$K$ of $\Phi$  given by \eqref{rdabs1}
is   also $T$-periodic,
i.e.,
$K(\tau +T, \omega) = K(\tau, \omega)$
for all $\tau \in \R$  and
$\omega \in \Omega$. 
Then 
by   Proposition \ref{periodatt}, 
we obtain  the 
     periodicity of the 
$\cald$-pullback attractor 
which is  stated below.

  \begin{thm}
\label{rdatt2}
Let   \eqref{f1}-\eqref{f4}  hold. 
Suppose  $g: \R \to  \ltwo$  is    periodic  
with period $T>0$ and 
   $ g \in  L^2 ((0,T), \ltwo)$.  
 Then the continuous cocycle $\Phi$ associated with
 problem \eqref{rd1}-\eqref{rd2}  
   has a unique $T$-periodic   $\cald$-pullback attractor $\cala \in \cald$
 in $ \ltwo$.
\end{thm}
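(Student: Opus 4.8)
The plan is to obtain Theorem \ref{rdatt2} as a corollary of the existence/uniqueness result in Theorem \ref{attrd1} together with the periodicity criterion in Proposition \ref{periodatt}. Since Theorem \ref{attrd1} is stated under the stronger hypothesis \eqref{gcon2}, the first thing I would check is that a $T$-periodic $g \in L^2((0,T), \ltwo)$ automatically satisfies \eqref{gcon2}. Because $\|g(\cdot, \cdot)\|^2$ is then $T$-periodic and locally integrable in time, the integral $\int_{-\infty}^0 e^{\delta s}\|g(s+t, \cdot)\|^2\, ds$ is bounded uniformly in $t \in \R$ for any $\delta > 0$: it is a shifted integral of a periodic function against the exponentially decaying weight $e^{\delta s}$ on the half-line $(-\infty, 0)$, whose value over each period of length $T$ is controlled by $\int_0^T \|g(r, \cdot)\|^2\, dr < \infty$. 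Multiplying this bounded quantity by $e^{ct}$ and letting $t \to -\infty$ yields \eqref{gcon2} for every $c > 0$. With \eqref{gcon2} verified, Theorem \ref{attrd1} already supplies the unique $\cald$-pullback attractor $\cala \in \cald$ for $\Phi$ in $\ltwo$, so only the periodicity remains.

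To apply Proposition \ref{periodatt} I need two ingredients: that $\Phi$ is a periodic cocycle with period $T$, and that it admits a closed measurable $\cald$-pullback absorbing set that is itself $T$-periodic. For the cocycle periodicity I would verify the condition in Definition \ref{ds1}, that is, $\Phi(t, \theta_{1,T}\tau, \omega, \cdot) = \Phi(t, \tau, \omega, \cdot)$ for all $t \ge 0$, $\tau \in \R$ and $\omega \in \Omega$. Unwinding the definition \eqref{rdphi}, this reduces to the identity $u(t+\tau+T, \tau+T, \theta_{2,-\tau-T}\omega, \cdot) = u(t+\tau, \tau, \theta_{2,-\tau}\omega, \cdot)$; since the forcing $g$ is invariant under the time shift by $T$ while the conjugated equation \eqref{v1}--\eqref{v2} is otherwise time-translation structured, this follows from the uniqueness of solutions. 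For the absorbing set I would take $K$ exactly as constructed in Lemma \ref{lemu3} via \eqref{rdabs1}, with radius $M(\tau,\omega)$ equal to the right-hand side of \eqref{lemu1a1}. Replacing $\tau$ by $\tau + T$ in $M(\tau, \omega)$ changes $\|g(s+\tau, \cdot)\|^2$ into $\|g(s+\tau+T, \cdot)\|^2 = \|g(s+\tau, \cdot)\|^2$ by periodicity, so $M(\tau+T, \omega) = M(\tau, \omega)$ and hence $K(\tau+T, \omega) = K(\tau, \omega)$.

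Having established that $\Phi$ is $T$-periodic and that $K$ is a closed measurable $\cald$-pullback absorbing set in $\cald$ of period $T$, Proposition \ref{periodatt} immediately gives that the unique attractor $\cala$ is periodic with period $T$, which completes the argument. I do not anticipate a genuine obstacle, since the analytic content is already carried by the earlier uniform estimates and abstract results; the only step demanding care is the cocycle-periodicity identity for $\Phi$, where one must confirm that the covariance of the random shift $\theta_2$ and the $T$-periodicity of $g$ combine so that uniqueness of solutions to \eqref{v1}--\eqref{v2} forces the two solution maps to coincide.
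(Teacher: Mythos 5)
Your proposal is correct and follows essentially the same route as the paper: verify that a $T$-periodic $g\in L^2((0,T),\ltwo)$ satisfies \eqref{gcon2} (so Theorem \ref{attrd1} gives the attractor), show $\Phi(t,\tau+T,\omega,\cdot)=\Phi(t,\tau,\omega,\cdot)$ via uniqueness of solutions and the shift structure of $\theta_2$, observe that the absorbing set \eqref{rdabs1} is $T$-periodic since $M(\tau+T,\omega)=M(\tau,\omega)$, and invoke Proposition \ref{periodatt}. The paper asserts the cocycle-periodicity identity and the verification of \eqref{gcon2} without detail, so your proposal is if anything slightly more explicit on exactly the points the paper leaves to the reader.
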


\section{Convergence  of   Attractors for Reaction-Diffusion Equations}
\setcounter{equation}{0}

In this section, we prove the upper semicontinuity
of  tempered  attractors  of 
problem \eqref{rd1}-\eqref{rd2}
when $\alpha \to 0$.
 To indicate  the
 dependence of solutions on $\alpha$,
  we will  write the solution of problem \eqref{rd1}-\eqref{rd2}
as $u_\alpha$, and the corresponding cocycle as $\Phi_\alpha$.
Similarly,  we write   the solution of problem
\eqref{v1}-\eqref{v2} as $v_\alpha$, that is, $v_\alpha$ satisfies
\be
  \label{va1}
 {\frac {\partial v_\alpha}{\partial t}}
  +  \lambda v_\alpha - \Delta  v_\alpha
     =  e^{-\alpha \omega (t)}  f \left ( x,     e^{\alpha \omega (t)} v_\alpha 
     \right )    + 
     e^{-\alpha \omega (t)}  g(t, x )    ,
 \ee
 with  initial  condition 
 \be\label{va2}
 v_\alpha( \tau, x ) = v_{\alpha, \tau}  (x),   \quad x\in \R^n. 
 \ee
When $\alpha =0$,  the stochastic  problem 
\eqref{rd1}-\eqref{rd2}   reduces
to  a   deterministic one:
\be
  \label{drd1}
 {\frac {\partial u}{\partial t}}
  +  \lambda u - \Delta u 
     =  f(x, u)    +  g(t, x )   ,
 \ee
  with  initial  condition 
 \be\label{drd2}
 u( \tau, x) = u_\tau (x),   \quad x\in \R^n.
 \ee 
 Throughout this section,  we assume $\alpha \in [0, 1]$.
It follows   from Theorem \ref{attrd1} that,  for
every  positive $\alpha$, $\Phi_\alpha$ has
a $\cald$-pullback attractor $\cala_\alpha \in \cald$.
  Let $\Phi_0$ be the continuous deterministic   cocycle 
associated with problem \eqref{drd1}-\eqref{drd2}
on $\ltwo$ over $(\R,  \{\thonet\}_{t \in \R})$.
Denote by $\cald_0$ the collection of tempered  families of
deterministic  nonempty  subsets of $\ltwo$, i.e.,
$$
\cald_0 = \{ D= \{ D(\tau)  \subseteq \ltwo:  \tau \in \R \}:\ 
\lim_{t \to  -\infty} e^{ct} \| D(\tau +t)\| =0, \  \forall  \tau \in \R,  \
\forall  \ c>0 \}.
$$
Under conditions \eqref{f1}-\eqref{f4} and \eqref{gcon2}, 
it  can be proved   that  
$\Phi_0$  has a unique $\cald_0$-pullback attractor
$\cala_0 = \{ \cala(\tau): \tau \in \R\} \in \cald_0$
in $\ltwo$
(see  \cite{wan6}).
Notice   that
the existence of   $\cald_0$-pullback attractors
for $\Phi_0$ is also implied by
Theorem \ref{attrd1}  as a special   case.

 Given $0< \alpha \le 1$, 
 let $K_\alpha$ be the $\cald$-pullback
 absorbing set  of $\Phi_\alpha$
 as defined by \eqref{rdabs1}, i.e.,
 \be
 \label{upabs1} 
K_\alpha = \left \{
K_\alpha  (\tau, \omega) =  \{ u \in \ltwo: \ \| u \|
\le M_\alpha (\tau, \omega ) \}: \ 
\tau \in \R, \ \omega \in \Omega
\right \},
\ee
where  $ M_\alpha (\tau, \omega )$ is given by 
\be\label{upabs2}
M_\alpha (\tau, \omega ) = 
    c \left (\int_{-\infty}^0 e^{\lambda s} e^{-2\alpha \omega (s)}
\left ( 1 + \| g(s+\tau, \cdot) \|^2 
\right ) ds  
\right )^{\frac 12} .
\ee
Note that the positive constant $c$ 
in \eqref{upabs2} 
 is  independent of $\tau$,
$\omega$ and $\alpha$.
Similarly,  let $K_0$ be a family of subsets of $\ltwo$ given by
 \be
 \label{upabs3} 
K_0 = \left \{
K_0  (\tau) =  \{ u \in \ltwo: \ \| u \|
\le M_0 (\tau ) \}: \ 
\tau \in \R 
\right \},
\ee
where  $ M_0 (\tau  )$ is  the constant:
\be\label{upabs3a1}
M_0 (\tau ) = 
    c \left (\int_{-\infty}^0 e^{\lambda s}  
\left ( 1 + \| g(s+\tau, \cdot) \|^2 
\right ) ds  
\right )^{\frac 12} .
\ee
It is evident that Lemma \ref{lemu1}
implies that 
$K_0$ is a $\cald_0$-pullback absorbing set
of $\Phi_0$ in $\ltwo$.
Given $\tau \in \R$   and $\omega \in \Omega$, denote by
\be\label{upabs4}
 B  (\tau, \omega) =  \{ u \in \ltwo: \ \| u \|
\le R (\tau, \omega ) \} ,
\ee
where  $ R (\tau, \omega )$ is given by 
 \be\label{upabs5}
R (\tau, \omega ) = 
    c \left (\int_{-\infty}^0 e^{\lambda s} e^{2  | \omega (s) |}
\left ( 1 + \| g(s+\tau, \cdot) \|^2 
\right ) ds  
\right )^{\frac 12} .
\ee
By \eqref{upabs1}-\eqref{upabs2}
and \eqref{upabs4}-\eqref{upabs5}
we have 
$K_\alpha (\tau , \omega)
\subseteq B(\tau, \omega)$
  for all $\alpha \in (0,1]$,
$\tau \in \R$   and
$\omega \in \Omega$. 
This implies   that
for every $\tau \in \R$   and
$\omega \in \Omega$,
\be\label{upabs5a1}
\bigcup_{0<\alpha \le 1}
\cala_\alpha (\tau, \omega)
\subseteq
\bigcup_{0<\alpha \le 1}
 K_\alpha (\tau, \omega)
\subseteq B(\tau, \omega).
\ee
By Lemma \ref{lemu1} we find that, 
for every  $0< \alpha \le 1$,
$\tau \in \R$
and  $\omega \in \Omega$, there exists $T>0$
  such that for all $t \ge T$,
\be \label{upabs6}
\| \Phi_\alpha (t,  \tau -t,  \theta_{2,-t} \omega,
\cala_\alpha (\tau -t, \theta_{2,-t}\omega ) ) \|_{\hone}
\le  R(\tau, \omega),
\ee
where $R(\tau, \omega)$ is given by  \eqref{upabs5}.
 By  \eqref{upabs6} and 
 the  invariance of  $\cala_\alpha$,  
 we get   that,  for  every 
 $\tau \in \R$
and  $\omega \in \Omega$, 
\be
 \label{upabs7}
\| u \|_{\hone} \le  R(\tau, \omega)
\quad \text{ for all }  \  u \in  
\cala_\alpha (\tau, \omega) \
 \text { with } \  0< \alpha \le 1.
\ee
We  will use    \eqref{upabs7} 
to prove
  the precompactness of the union
 of $\cala_\alpha$   
  in $\ltwo$  for  $0<\alpha \le 1$.

In   the sequel,  we further 
  assume  the nonlinear function
$f$ satisfies, for all $x \in \R^n$ and $s \in \R$,
\be
 \label{f5}
|{\frac {\partial f}{\partial s}} (x,s)|
\le c  |s|^{p-2} + \psi_4 (x),
\ee
where $c$  is a positive constant,
 $\psi_4 \in L^\infty(\R^n)$ if $p=2$, and
$\psi_4 \in L^{\frac p{p-2}} (\R^n)$ if $p>2$.

We will investigate the convergence of $\cala_\alpha$ as
$\alpha \to 0$. To that   end,  we first  derive 
the convergence of solutions of problem
\eqref{rd1}-\eqref{rd2}   as $\alpha \to 0$.

\begin{lem}
 \label{atup1}
 Suppose 
\eqref{f1}-\eqref{f4} and \eqref{f5} hold.
Let $v_\alpha$ and $u$ be the
solutions of  \eqref{va1}  and \eqref{drd1}
with initial conditions $v_{\alpha, \tau}$ and $u_\tau$, respectively.
Then,  for every $\tau \in \R$,
$\omega \in \Omega$,
$T>0$   and $\varepsilon \in [0, 1]$,
there exists a positive number
 $\alpha_0 = \alpha_0(\tau, \omega, T, \varepsilon)$
 such that for all $\alpha \le \alpha_0$  and $t \in [\tau,
 \tau +T]$,   
 $$
  \| v_\alpha (t, \tau, \omega, v_{\alpha, \tau}) - u(t,\tau, \omega)  \|^2
  \le c  e^{c  (t -\tau)} \|  v_{\alpha, \tau} -  u_\tau  \|^2
  $$
    \be\label{atup1a1}
  + c  \varepsilon e^{c  (t -\tau)}
  \left  ( T +
\|u_\tau\|^2 + \|u_{\alpha, \tau} \|^2 + 
   \int_{\tau}^t   \| g(s, \cdot) \|^2   ds
   \right ), 
 \ee
where $c$ is a positive   constant independent of  $\tau$, $\omega$, $\varepsilon$
and $\alpha$.  
\end{lem}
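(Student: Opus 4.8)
The plan is to estimate the difference $w=v_\alpha-u$ by a direct energy argument followed by Gronwall's inequality. Writing $z=e^{-\alpha\omega(t)}$, so that $z^{-1}v_\alpha=e^{\alpha\omega(t)}v_\alpha=u_\alpha$ by \eqref{vu}, subtraction of \eqref{drd1} from \eqref{va1} gives
\[
\frac{\partial w}{\partial t}+\lambda w-\Delta w=\big(z\,f(x,z^{-1}v_\alpha)-f(x,u)\big)+(z-1)g(t,x).
\]
First I would multiply by $w$ in $\ltwo$ to obtain
\[
\tfrac12\frac{d}{dt}\|w\|^2+\lambda\|w\|^2+\|\nabla w\|^2=\ii\big(z\,f(x,z^{-1}v_\alpha)-f(x,u)\big)\,w\,dx+(z-1)(g,w).
\]
The forcing term is immediate: $|(z-1)(g,w)|\le\tfrac12\|w\|^2+\tfrac12|z-1|^2\|g\|^2$. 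Since $\omega$ is continuous, $|\omega(t)|$ is bounded on the compact interval $[\tau,\tau+T]$, so given $\varepsilon\in[0,1]$ one can choose $\alpha_0=\alpha_0(\tau,\omega,T,\varepsilon)$ with $|z-1|\le\varepsilon$ and $|z^{-1}-1|\le\varepsilon$ for all $\alpha\le\alpha_0$ and $t\in[\tau,\tau+T]$; in particular $|z-1|^2\le\varepsilon$, contributing at most $\varepsilon\|g\|^2$.

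The heart of the matter is the nonlinear term, which I would decompose as
\[
z\,f(x,z^{-1}v_\alpha)-f(x,u)=(z-1)f(x,z^{-1}v_\alpha)+\big(f(x,z^{-1}v_\alpha)-f(x,v_\alpha)\big)+\big(f(x,v_\alpha)-f(x,u)\big).
\]
The last bracket is monotone: by the mean value theorem in $s$ and \eqref{f3}, $\ii\big(f(x,v_\alpha)-f(x,u)\big)w\,dx\le\alpha_3\|w\|^2$, which is absorbed into the Gronwall step. For the middle bracket I would again use the mean value theorem together with the derivative bound \eqref{f5}; since the intermediate point lies between $v_\alpha$ and $z^{-1}v_\alpha$ with $z^{-1}$ near $1$, this yields a pointwise bound $\le c\,|z^{-1}-1|\big(|v_\alpha|^{p-1}+\psi_4\,|v_\alpha|\big)$. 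For the first bracket I would use the growth bound \eqref{f2}, giving $\le c\,|z-1|\big(|v_\alpha|^{p-1}+\psi_2\big)$.

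Integrating these two brackets against $w$ and applying Hölder's inequality (with exponents $p$ and $p/(p-1)$, and the generalized Hölder inequality for the $\psi_4$ term via $\psi_4\in L^{p/(p-2)}$ when $p>2$, $\psi_4\in L^\infty$ when $p=2$), followed by Young's inequality and $\|w\|_p\le\|v_\alpha\|_p+\|u\|_p$, I obtain an upper bound of the form $\varepsilon\,c\big(\|v_\alpha\|_p^p+\|u\|_p^p+1\big)+c\|w\|^2$, using $|z-1|,|z^{-1}-1|\le\varepsilon$. The $\|w\|^2$ terms feed into Gronwall, while the remaining terms, integrated over $[\tau,t]$, reduce the problem to controlling $\int_\tau^t\|v_\alpha\|_p^p\,ds$ and $\int_\tau^t\|u\|_p^p\,ds$. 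These are exactly what Lemma \ref{socon1} supplies: applied to $v_\alpha$ it bounds $\int_\tau^t z^2\|u_\alpha\|_p^p\,ds$ — comparable on the compact interval, where $z$ and the weight $e^{\lambda(s-t)}$ are bounded above and below, to $\int_\tau^t\|v_\alpha\|_p^p\,ds$ — by $\|v_{\alpha,\tau}\|^2+c\int_\tau^t\|g\|^2\,ds+cT$, and applied with $\alpha=0$ it bounds $\int_\tau^t\|u\|_p^p\,ds$ by $\|u_\tau\|^2+c\int_\tau^t\|g\|^2\,ds+cT$. Since $\|v_{\alpha,\tau}\|=z(\tau,\omega)\|u_{\alpha,\tau}\|\le c\|u_{\alpha,\tau}\|$, this produces precisely the combination $T+\|u_\tau\|^2+\|u_{\alpha,\tau}\|^2+\int_\tau^t\|g\|^2\,ds$ appearing in \eqref{atup1a1} (note $c$ may depend on $T$, which is consistent with the statement).

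Collecting everything yields $\frac{d}{dt}\|w\|^2\le c\|w\|^2+c\varepsilon\,\Theta$, where $\Theta$ is the bracketed quantity in \eqref{atup1a1}, and Gronwall's inequality over $[\tau,t]$ with $\|w(\tau)\|^2=\|v_{\alpha,\tau}-u_\tau\|^2$ delivers \eqref{atup1a1} with the factor $e^{c(t-\tau)}$. The main obstacle is the nonlinear term: one must split it so that the genuinely superlinear growth appears only multiplied by the small factor $|z-1|$ (made small by continuity of $\omega$ on the compact interval), while the leftover $O(\|w\|^2)$ part is tamed by the one-sided bound \eqref{f3}; keeping careful track of the resulting $L^p$ integrals and invoking Lemma \ref{socon1} to convert them into the data-dependent right-hand side is the crux of the argument.
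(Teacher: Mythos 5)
Your proposal is correct and follows essentially the same route as the paper's proof: an energy estimate for the difference $v_\alpha-u$, the small factor $|e^{\pm\alpha\omega(t)}-1|<\varepsilon$ obtained from continuity of $\omega$ on the compact interval $[\tau,\tau+T]$, a three-term splitting of the nonlinearity controlled by \eqref{f3}, \eqref{f2} and \eqref{f5} together with H\"older and Young inequalities, a Gronwall argument, and Lemma \ref{socon1} (applied to $v_\alpha$ and to the case $\alpha=0$) to convert the $L^p$-in-time integrals into the data appearing in \eqref{atup1a1}. The only, immaterial, difference is the direction of the splitting: you rescale the argument of $v_\alpha$ (passing through $f(x,v_\alpha)$), while the paper rescales the argument of $u$ (passing through $f(x,e^{\alpha\omega(t)}u)$); the three hypotheses play identical roles in both versions.
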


\begin{proof}
Let  $\kappa =  v_\alpha -u$.  Then by \eqref{va1}  and 
\eqref{drd1}  we get
\be\label{patup1_1}
{\frac {\partial \kappa}{\partial t}}  +\lambda \kappa-\Delta \kappa
= e^{-\alpha \omega (t)}  f \left ( x,     e^{\alpha \omega (t)} v_\alpha 
     \right )  -f(x,u)   + 
     \left (e^{-\alpha \omega (t)} -1 \right )   g(t, x ) .
\ee
Given $\tau \in \R$,
$\omega \in \Omega$,
$T>0$   and $\varepsilon \in [0, 1)$, since 
$\omega$ is continuous on $\R$,   we find   that
there  exists 
 $\alpha_0 = \alpha_0(\tau, \omega, T, \varepsilon)>0$
 such that for all $\alpha  \in [0,  \alpha_0]$  and $t \in [\tau,
 \tau +T]$,    
 \be
 \label{patup1_2}
 |e^{\alpha \omega (t)} -1| +  | e^{-\alpha \omega (t)} -1|
 < \varepsilon.
 \ee
Multiplying \eqref{patup1_1}
 by $\kappa$ and then integrating over $\R^n$, we
obtain
$$
 {\frac 12} {\frac d{dt}} \| \kappa\|^2
+\lambda \| \kappa\|^2 + \| \nabla \kappa \|^2 
$$
 \be
 \label{patup1_3}
=\ii 
 \left ( e^{-\alpha \omega (t)}  f \left ( x,     e^{\alpha \omega (t)} v_\alpha 
     \right )  -f(x,u) \right )  \kappa dx
     +   \left (e^{-\alpha \omega (t)} -1 \right )   \ii g(t, x ) \kappa dx.
\ee
By \eqref{f2}-\eqref{f3}  and \eqref{f5} we have the following estimates
on the first term on the right-hand side of \eqref{patup1_1}
$$
\ii 
 \left ( e^{-\alpha \omega (t)}  f  ( x,     e^{\alpha \omega (t)} v_\alpha  )
   -f(x,u) \right )  \kappa dx
= \ii 
  e^{-\alpha \omega (t)} \left ( f  ( x,     e^{\alpha \omega (t)} v_\alpha ) 
      -    f ( x,     e^{\alpha \omega (t)} u ) 
       \right )  \kappa dx
     $$
     $$
+  \ii  \left (
  e^{-\alpha \omega (t)}  f  ( x,     e^{\alpha \omega (t)} u ) 
      -    f ( x,     e^{\alpha \omega (t)} u )  \right )
       \kappa dx
       +  \ii   \left (  f  ( x,     e^{\alpha \omega (t)} u ) 
      -    f ( x,    u )  \right )
       \kappa dx
     $$
     $$
     = \ii \kappa^2  {\frac {\partial  f} {\partial s}} (x,s)   dx
+     \left (
  e^{-\alpha \omega (t)} -1 \right )
   \ii f  ( x,     e^{\alpha \omega (t)} u )   \kappa dx
       +  \left (
  e^{\alpha \omega (t)} -1 \right )  \ii   
       \kappa u   {\frac {\partial  f} {\partial s}} (x,s)   dx
     $$
     $$
     \le \alpha_3 \| \kappa \|^2
     +  | e^{-\alpha \omega (t)} -1  |
     \ii \left (
      \alpha_2 e^{\alpha (p-1) \omega (t)} |u|^{p-1} |\kappa|
      +\psi_2 | \kappa| 
     \right ) dx
     $$
     $$
     +  | e^{\alpha \omega (t)} -1  |  \ii  
     \left (
      c  (1+ e^{\alpha (p-2) \omega (t)}) |u|^{p-1} |\kappa|
      + \psi_4 |u| |\kappa| 
     \right )dx
     $$
      $$
     \le \alpha_3 \| \kappa \|^2
     +  c_1 | e^{-\alpha \omega (t)} -1  |
     \ii \left (
      e^{\alpha (p-1) \omega (t)} (  |u|^{p} +  |v_\alpha|^p )
      + |\psi_2|^2 +  | \kappa| ^2
     \right ) dx
     $$
     \be\label{patup1_5}
     + c_1  | e^{\alpha \omega (t)} -1  |  \ii  
     \left (
       (1+ e^{\alpha (p-2) \omega (t)}) (|u|^{p} +  |v_\alpha|^p )
      + |\psi_4  |^{{\frac p{p-2}}}
     \right )dx.
     \ee
By \eqref{patup1_2}  and \eqref{patup1_5}  we get
for all $\alpha \in [0, \alpha_0]$   and $t \in [\tau, \tau +T]$,
  \be\label{patup1_6}
\ii \left ( e^{-\alpha \omega (t)}  f  ( x,     e^{\alpha \omega (t)} v_\alpha  )
   -f(x,u) \right )  \kappa dx
   \le c_2 \| \kappa\|^2 +  c_2 \varepsilon +
   c_2\varepsilon \ii ( |u|^p + |v_\alpha|^p  ) dx.
 \ee
   For the last term on the right-hand side of
   \eqref{patup1_3}, by \eqref{patup1_2}  we have,
   for all $\alpha \in [0, \alpha_0]$   and $t \in [\tau, \tau +T]$,
   \be
   \label{patup1_7}
    \left (e^{-\alpha \omega (t)} -1 \right )   \ii g(t, x ) \kappa dx
    \le \varepsilon \| \kappa \|^2
    + \varepsilon \| g(t, \cdot )\|^2.
\ee
It follows    from  \eqref{patup1_3}
and \eqref{patup1_6}-\eqref{patup1_7} that 
 for all $\alpha \in [0, \alpha_0]$   and $t \in [\tau, \tau +T]$,
\be
 \label{patup1_9}
  {\frac d{dt}} \| \kappa\|^2
 \le  c_3 \| \kappa \|^2 
 +  c_4 \varepsilon  \left ( 
  \|u \|^p_p + \|v_\alpha \|^p_p   + \| g(t, \cdot) \|^2
  \right ).
 \ee
 By \eqref{patup1_9} we get,
  for all $\alpha \in [0, \alpha_0]$   and $t \in [\tau, \tau +T]$,
  \be\label{patup1_20}
  \| \kappa (t) \|^2
  \le e^{c_3 (t -\tau)} \| \kappa (\tau )\|^2
  + c_4 \varepsilon e^{c_3 (t -\tau)}
  \int_\tau^t
  \left (
  \| v_\alpha (s, \tau, \omega, v_{\alpha, \tau})\|^p_p
  +  \| u  (s, \tau,  u_\tau )\|^p_p + \| g(s,\cdot)\|^2
  \right )ds.
 \ee
 On  the other hand, 
  by  \eqref{patup1_2}  and
   Lemma \ref{socon1}  we obtain,  
   for all $\alpha \in [0, \alpha_0]$  and 
       $t \in [\tau, \tau +T]$,   
 $$ 
 \|   v_\alpha (t, \tau ,  \omega, v_{\alpha, \tau } ) \|^2 +
\int_{\tau  }^t   
 z^2(s,  \omega )
\|    u_\alpha (s, \tau ,  \omega, u_{\tau } )  \|^p_p ds
$$
$$
\le e^{\lambda (t-\tau)}
\left  ( 
 e^{-2\alpha \omega (\tau)} \|u_{\alpha, \tau} \|^2 + 
  c  
   \int_{\tau}^t e^{-2\alpha \omega (s)} (1+  \| g(s, \cdot) \|^2 )  ds
 \right )
 $$
 \be\label{patup1_21}
 \le c  e^{\lambda (t-\tau)}
\left  (  T+  \|u_{\alpha, \tau} \|^2 + 
   \int_{\tau}^t     \| g(s, \cdot) \|^2    ds
 \right ) .
 \ee
 Note that 
   Lemma \ref{socon1} is also valid for $\alpha =0$.
   Therefore   we  have, 
   for all   $t \in [\tau, \tau +T]$,   
  \be\label{patup1_22}
\int_{\tau  }^t   
\|    u(s, \tau ,   u_{\tau } )  \|^p_p ds
\le c e^{\lambda (t-\tau)}
\left  ( T +
\|u_\tau\|^2 + 
   \int_{\tau}^t   \| g(s, \cdot) \|^2   ds
   \right ) .
 \ee
 By \eqref{patup1_2} we find that, 
  for all $\alpha \in [0, \alpha_0]$   and $s \in [\tau, \tau +T]$,
  $$
  \| v_\alpha (s, \tau, \omega, v_{\alpha, \tau})\|^p_p
  = z^p(s, \omega)   \| u_\alpha (s, \tau, \omega, u_{\alpha, \tau})\|^p_p
  \le c z^2(s, \omega)   \| u_\alpha (s, \tau, \omega, u_{\alpha, \tau})\|^p_p,
  $$
  which along with \eqref{patup1_21} shows   that,
  for all $\alpha \in [0, \alpha_0]$   and $t \in [\tau, \tau +T]$,
  \be\label{patup1_30}
   \|   v_\alpha (t, \tau ,  \omega, v_{\alpha, \tau } ) \|^2 + 
\int_{\tau  }^t    
\|    v_\alpha (s, \tau ,  \omega, v_{\alpha, \tau } )  \|^p_p ds
 \le c  e^{\lambda (t-\tau)}
\left  (  T+  \|u_{\alpha, \tau} \|^2 + 
   \int_{\tau}^t     \| g(s, \cdot) \|^2    ds
 \right ) .
 \ee
  It follows   from  
   \eqref{patup1_20} and \eqref{patup1_22}-\eqref{patup1_30}
  that,   for all $\alpha \in [0, \alpha_0]$   and $t \in [\tau, \tau +T]$,
$$ 
  \| v_\alpha (t, \tau, \omega, v_{\alpha, \tau}) - u(t,\tau, \omega)  \|^2
  \le e^{c_3 (t -\tau)} \|  v_{\alpha, \tau} -  u_\tau  \|^2
  $$
    \be\label{patup1_31}
  + c_5 \varepsilon e^{c_5 (t -\tau)}
  \left  ( T +
\|u_\tau\|^2 + \|u_{\alpha, \tau} \|^2 + 
   \int_{\tau}^t   \| g(s, \cdot) \|^2   ds
   \right ).
 \ee
 This completes  the proof. 
  \end{proof}

As a consequence of Lemma \ref{atup1} we have the following 
estimates for $u_\alpha (t, \tau, \omega, u_{\alpha, \tau})$.

\begin{cor}
 \label{atup2}
 Suppose 
\eqref{f1}-\eqref{f4} and \eqref{f5} hold. 
Then,  for every $\tau \in \R$,
$\omega \in \Omega$,
$T>0$   and $\varepsilon \in [0, 1)$,
there exists a positive number
 $\alpha_0 = \alpha_0(\tau, \omega, T, \varepsilon)$
 such that for all $\alpha \le \alpha_0$  and $t \in [\tau,
 \tau +T]$,   
 $$
  \| u_\alpha (t, \tau, \omega, u_{\alpha, \tau}) - u(t,\tau, \omega)  \|^2
  \le c  e^{c  (t -\tau)} \|  u_{\alpha, \tau} -  u_\tau  \|^2
  $$
   $$
  + c  \varepsilon e^{c  (t -\tau)}
  \left  ( T +
\|u_\tau\|^2 + \|u_{\alpha, \tau} \|^2 + 
   \int_{\tau}^t   \| g(s, \cdot) \|^2   ds
   \right ), 
$$
where $c$ is a positive   constant independent of  $\tau$, $\omega$, $\varepsilon$
and $\alpha$.  
\end{cor}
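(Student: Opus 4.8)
The plan is to obtain the corollary directly from Lemma~\ref{atup1} via the change of variables \eqref{vu}, which ties the stochastic solution $u_\alpha$ to the solution $v_\alpha$ of the random deterministic problem \eqref{va1} through $v_\alpha(t,\tau,\omega,v_{\alpha,\tau}) = e^{-\alpha\omega(t)}\,u_\alpha(t,\tau,\omega,u_{\alpha,\tau})$ with initial data $v_{\alpha,\tau} = e^{-\alpha\omega(\tau)} u_{\alpha,\tau}$. First I would write, for $t\in[\tau,\tau+T]$,
$$
u_\alpha - u = e^{\alpha\omega(t)}(v_\alpha - u) + (e^{\alpha\omega(t)}-1)\,u,
$$
and take $\ltwo$ norms to get $\|u_\alpha - u\|^2 \le 2 e^{2\alpha\omega(t)}\|v_\alpha - u\|^2 + 2|e^{\alpha\omega(t)}-1|^2\|u\|^2$. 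The first summand is then handled by Lemma~\ref{atup1}, while the second is the new remainder to be made small.

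Next I would invoke \eqref{patup1_2}: since $\omega$ is continuous, hence bounded on the compact interval $[\tau,\tau+T]$, there is $\alpha_0=\alpha_0(\tau,\omega,T,\varepsilon)>0$ so small that $|e^{\pm\alpha\omega(t)}-1|<\varepsilon$ for all $\alpha\le\alpha_0$ and $t\in[\tau,\tau+T]$; in particular $e^{2\alpha\omega(t)}$ and $e^{-2\alpha\omega(\tau)}$ are bounded by an absolute constant and $|e^{\alpha\omega(t)}-1|^2\le\varepsilon$. To feed Lemma~\ref{atup1}, I must convert its initial-data term $\|v_{\alpha,\tau}-u_\tau\|^2$ back to $\|u_{\alpha,\tau}-u_\tau\|^2$: decomposing $v_{\alpha,\tau}-u_\tau = e^{-\alpha\omega(\tau)}(u_{\alpha,\tau}-u_\tau)+(e^{-\alpha\omega(\tau)}-1)u_\tau$ gives $\|v_{\alpha,\tau}-u_\tau\|^2 \le 2 e^{-2\alpha\omega(\tau)}\|u_{\alpha,\tau}-u_\tau\|^2 + 2\varepsilon\|u_\tau\|^2$. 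For the second summand above I would bound $\|u(t)\|^2$ on $[\tau,\tau+T]$ by applying Lemma~\ref{socon1} with $\alpha=0$ (so $z\equiv1$), which yields $\|u(t)\|^2 \le \|u_\tau\|^2 + c\bigl(T+\int_\tau^t\|g(s,\cdot)\|^2\,ds\bigr)$. Assembling the estimate of Lemma~\ref{atup1}, the initial-data conversion, and this $\|u\|^2$ bound, and absorbing every absolute exponential prefactor into a single constant $c$, reproduces exactly the claimed inequality.

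The only point requiring genuine care—and the main obstacle—is the uniform control of the factors $e^{\pm\alpha\omega(t)}$ across the whole interval $[\tau,\tau+T]$. Because $\omega(t)$ grows unboundedly as $t\to\pm\infty$, the threshold $\alpha_0$ cannot be chosen independently of the data; it must be allowed to depend on $\tau$, $\omega$ and $T$, precisely as in Lemma~\ref{atup1}, and this is legitimate since $\omega$ is bounded on the fixed compact interval. Everything else is bookkeeping: matching initial conditions through \eqref{vu} and relabeling constants. I would therefore state that the result follows from Lemma~\ref{atup1} together with Lemma~\ref{socon1} and the identity \eqref{vu}, and keep the written proof short.
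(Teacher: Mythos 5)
Your proposal is correct and follows essentially the same route as the paper: both reduce the corollary to Lemma \ref{atup1} through the identity $u_\alpha = e^{\alpha\omega(t)}v_\alpha$ from \eqref{vu} and the uniform smallness \eqref{patup1_2} of $|e^{\pm\alpha\omega(t)}-1|$ on $[\tau,\tau+T]$. The only cosmetic difference is the splitting: the paper writes $u_\alpha - u = (e^{\alpha\omega(t)}-1)v_\alpha + (v_\alpha - u)$ and controls the remainder $\varepsilon^2\|v_\alpha(t)\|^2$ by \eqref{patup1_30}, whereas you write $u_\alpha - u = e^{\alpha\omega(t)}(v_\alpha-u) + (e^{\alpha\omega(t)}-1)u$ and control $\varepsilon^2\|u(t)\|^2$ by Lemma \ref{socon1} with $\alpha=0$; both remainders are covered by estimates already established in the proof of Lemma \ref{atup1}, and your explicit conversion of $\|v_{\alpha,\tau}-u_\tau\|^2$ into $\|u_{\alpha,\tau}-u_\tau\|^2$ plus an $\varepsilon\|u_\tau\|^2$ term is a detail the paper leaves implicit.
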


\begin{proof}
It follows from  \eqref{patup1_2}  that, 
    for all $\alpha \in [0, \alpha_0]$   and $t \in [\tau, \tau +T]$,
    $$
     \| u_\alpha (t, \tau, \omega, u_{\alpha, \tau}) - u(t,\tau, \omega)  \|^2
     \le 2  \|u_\alpha (t, \tau, \omega, u_{\alpha, \tau})
     -  v_\alpha (t, \tau, \omega, v_{\alpha, \tau})    \|^2
     + 2   \| v_\alpha (t, \tau, \omega, v_{\alpha, \tau}) - u(t,\tau, \omega)  \|^2
    $$
$$  
     \le 2  \left |   e^{\alpha \omega (t)} -1 \right |^2
       \|v_\alpha (t, \tau, \omega, v_{\alpha, \tau})  \|^2
     + 2   \| v_\alpha (t, \tau, \omega, v_{\alpha, \tau}) - u(t,\tau, \omega)  \|^2
    $$
$$  
     \le 2 \varepsilon^2 
       \|v_\alpha (t, \tau, \omega, v_{\alpha, \tau})  \|^2
     + 2   \| v_\alpha (t, \tau, \omega, v_{\alpha, \tau}) - u(t,\tau, \omega)  \|^2,
    $$
    which along with  \eqref{atup1a1},  \eqref{patup1_2}
    and  \eqref{patup1_30}
    completes   the proof. 
    \end{proof}

 Next,  we present uniform estimates of solutions
 with respect to the intensity  $\alpha $  of  noise.
 These estimates are needed for establishing 
 the upper semicontinuity of pullback attractors. 
 By carefully examining the proof of Lemmas \ref{lem1}, \ref{lem3}
 and \ref{lemu1}, we   get     following uniform estimates.
 
\begin{lem}
\label{esal1}
 Suppose  \eqref{f1}-\eqref{f4}  and \eqref{gcon1} hold.
Then for every $\tau \in \R$, $\omega \in \Omega$   
and $D=\{D(\tau, \omega)
: \tau \in \R,  \omega \in \Omega\}  \in \cald$,
 there exists  $T=T(\tau, \omega,  D) \ge 1$ such that 
 for all $t \ge T$ and $\alpha \in [0, 1]$,
$$
\| u_\alpha(\tau, \tau -t,  \theta_{2, -\tau} \omega, u_{\alpha, \tau -t}  ) \|^2 _\hone
\le   c    \int_{-\infty}^0
e^{\lambda s}    e^{2 |\omega (s)|} 
 \left (1+    \| g(s+\tau, \cdot) \|^2  \right )   ds,
$$
 where $u_{\alpha, \tau -t}\in D(\tau -t, \theta_{2, -t} \omega)$ and
  $c$ is a  positive constant  independent of $\tau$, $\omega$, $D$ 
  and $\alpha$.
\end{lem}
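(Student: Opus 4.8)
The plan is to re-run the chain of estimates that produced Lemma \ref{lemu1} (which itself rests on Lemmas \ref{lem1} and \ref{lem3}), keeping track of where the random factor $z^2(s,\omega)=e^{-2\alpha\omega(s)}$ enters and replacing it throughout by an $\alpha$-free majorant. The single elementary fact driving everything is that for $\alpha\in[0,1]$ and every $s\in\R$,
\begin{equation*}
z^2(s,\omega)=e^{-2\alpha\omega(s)}\le e^{2\alpha|\omega(s)|}\le e^{2|\omega(s)|}.
\end{equation*}
Since the constant $c$ in Lemmas \ref{lem1}, \ref{lem3} and \ref{lemu1} was already declared independent of $\alpha$, only two points require attention: that the right-hand bound can be made $\alpha$-free, and that the absorbing time $T$ can be chosen independently of $\alpha$. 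Note that $\alpha=0$ is included, since then $z\equiv 1\le e^{2|\omega(s)|}$ and the estimates reduce to the deterministic ones for \eqref{drd1}.

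For the right-hand side I would follow the proof of Lemma \ref{lemu1} verbatim. Applying Lemmas \ref{lem1} and \ref{lem3} to the $v_\alpha$-problem with datum $v_{\alpha,\tau-t}=z(\tau-t,\theta_{2,-\tau}\omega)u_{\alpha,\tau-t}$ and then passing to $u_\alpha$ via $\|u_\alpha\|^2=z^2(-\tau,\omega)\|v_\alpha\|^2$ cancels the prefactor $z^{-2}(-\tau,\omega)$ exactly as in \eqref{plemu1_5}, leaving a clean bound of the form $c\int_{-\infty}^0 e^{\lambda s}z^2(s,\omega)\bigl(1+\|g(s+\tau,\cdot)\|^2\bigr)\,ds$. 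Majorizing $z^2(s,\omega)\le e^{2|\omega(s)|}$ under the integral sign then yields precisely the asserted uniform estimate, and the resulting constant is the same $\alpha$-independent $c$.

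The main obstacle is producing a single absorbing time $T=T(\tau,\omega,D)$ valid for all $\alpha\in[0,1]$. The only $\alpha$-dependent term competing with the clean right-hand side is the initial-data contribution $e^{-\lambda t}\|v_{\alpha,\tau-t}\|^2$ from Lemma \ref{lem1}; after multiplication by $z^2(-\tau,\omega)$ and using $\theta_{2,-\tau}\omega(\tau-t)=\omega(-t)-\omega(-\tau)$, a direct computation collapses all the $\tau$-factors and gives
\begin{equation*}
z^2(-\tau,\omega)\,e^{-\lambda t}\|v_{\alpha,\tau-t}\|^2
= e^{-\lambda t}e^{-2\alpha\omega(-t)}\|u_{\alpha,\tau-t}\|^2
\le e^{-\lambda t}e^{2|\omega(-t)|}\,\|D(\tau-t,\theta_{2,-t}\omega)\|^2.
\end{equation*}
The final bound is $\alpha$-free, and it tends to $0$ as $t\to\infty$: by \eqref{asome} the factor $e^{2|\omega(-t)|}$ grows subexponentially, the temperedness of $D\in\cald$ forces $\|D(\tau-t,\theta_{2,-t}\omega)\|^2$ to grow slower than any exponential, and $e^{-\lambda t}$ decays exponentially. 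Hence a threshold $T$ making this term no larger than the right-hand integral can be selected uniformly over $\alpha\in[0,1]$, which completes the argument once the $H^1$-refinement of Lemma \ref{lem3} (with $z^2$ again replaced by $e^{2|\omega|}$) is invoked through Corollary \ref{cor2}.
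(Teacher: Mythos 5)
Your proposal is correct and is essentially the paper's own argument: the paper gives no separate proof of this lemma, saying only that it follows ``by carefully examining the proof of Lemmas \ref{lem1}, \ref{lem3} and \ref{lemu1}'', and your write-up carries out exactly that examination --- the majorization $e^{-2\alpha\omega(s)}\le e^{2|\omega(s)|}$ for $\alpha\in[0,1]$, the cancellation of the prefactor $z^{-2}(-\tau,\omega)$ when passing from $v_\alpha$ to $u_\alpha$, and the $\alpha$-uniform choice of the absorbing time via the identity $z^2(-\tau,\omega)\|v_{\alpha,\tau-t}\|^2=e^{-2\alpha\omega(-t)}\|u_{\alpha,\tau-t}\|^2$ combined with \eqref{asome} and the temperedness of $D$. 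In particular, you correctly isolate and resolve the one point the paper's one-line justification glosses over, namely that $T$ can be taken independent of $\alpha$.
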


 Based on Lemma \ref{esal1}, 
 from  the proof of Lemmas \ref{lem4}  and \ref{lemu2} we 
 get 
 the following  estimates on the tails of solutions.
 
  \begin{lem}
\label{esal2}
Suppose  \eqref{f1}-\eqref{f4}  and \eqref{gcon1} hold.
  Let  $\tau \in \R$, $\omega \in \Omega$   and $D=\{D(\tau, \omega)
: \tau \in \R,  \omega \in \Omega\}  \in \cald $.
Then  for every $\eta>0$,   there exist
    $T=T(\tau, \omega,  D, \eta) \ge 1$
 and $K=K(\tau, \omega, \eta) \ge 1$ such that for all $t \ge T$
 and $\alpha \in [0,1]$,  
 $$
 \int_{|x| \ge K}
 |u_\alpha (\tau, \tau -t, \theta_{2, -\tau} \omega, u_{\alpha, \tau -t} ) (x)|^2  dx \le \eta,
$$
 where $u_{\alpha, \tau -t}\in D(\tau -t, \theta_{2, -t} \omega)$.
\end{lem}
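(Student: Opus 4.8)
The plan is to reproduce the tail estimates of Lemmas \ref{lem4} and \ref{lemu2} while tracking the dependence on $\alpha$ through the factor $z(t,\omega)=e^{-\alpha\omega(t)}$, and to replace every $\alpha$-dependent weight $z^2(s,\omega)=e^{-2\alpha\omega(s)}$ by the $\alpha$-free majorant $e^{2|\omega(s)|}$, which is valid for all $\alpha\in[0,1]$. This single observation is what upgrades the pointwise-in-$\alpha$ bounds of the earlier lemmas to bounds that are uniform over $\alpha\in[0,1]$, and in particular it absorbs the limiting case $\alpha=0$ automatically, since the deterministic problem \eqref{drd1} is just the $z\equiv 1$ specialization.

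First I would run the argument of Lemma \ref{lem4} for the solution $v_\alpha$ of \eqref{va1} with $\omega$ replaced by $\theta_{2,-\tau}\omega$. Multiplying \eqref{va1} by $\rh v_\alpha$ and integrating leads, exactly as in \eqref{plem4_5}, to a differential inequality for $\ii \rh |v_\alpha|^2\,dx$ whose right-hand side consists of a small multiple of $\|v_\alpha\|^2_{\hone}$ together with tail integrals of $\psi_1$ and $g$ weighted by $z^2$. Integrating in time by the Gronwall inequality and translating $\omega\mapsto\theta_{2,-\tau}\omega$ produces the analogue of \eqref{plem4_15}; the $\|v_\alpha\|^2_{\hone}$ contribution is absorbed by the second (integral) estimate of Lemma \ref{lem1}, the tail of $g$ is made small by enlarging $K$ using $\psi_1\in L^1(\R^n)$, and the relevant $s$-integrals converge by \eqref{asome} and \eqref{gcon1}.

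Next I would pass from $v_\alpha$ to $u_\alpha$ as in Lemma \ref{lemu2}. After the shift one has $u_\alpha(\tau,\cdot)=z(-\tau,\omega)\,v_\alpha(\tau,\cdot)$, so $|u_\alpha|^2=z^2(-\tau,\omega)|v_\alpha|^2$, and multiplying the $v_\alpha$-tail bound by $z^2(-\tau,\omega)$ cancels the factor $z^{-2}(-\tau,\omega)$ appearing inside it. What remains is, up to the constant $c$, the quantity $\eta$ times
\[
z^2(-\tau,\omega)+\int_{-\infty}^0 e^{\lambda s}z^2(s,\omega)\bigl(1+\|g(s+\tau,\cdot)\|^2\bigr)\,ds.
\]
Bounding $z^2(-\tau,\omega)=e^{-2\alpha\omega(-\tau)}\le e^{2|\omega(-\tau)|}$ and $z^2(s,\omega)=e^{-2\alpha\omega(s)}\le e^{2|\omega(s)|}$ for $\alpha\in[0,1]$ dominates this by a finite constant $C(\tau,\omega)$ independent of $\alpha$; here the uniform counterpart (Lemma \ref{esal1}) of the $\hone$ estimate is exactly what guarantees that the constant inherited from the absorbed gradient term is likewise $\alpha$-free, and the convergence of the $s$-integral again follows from \eqref{asome} and \eqref{gcon1}. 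Choosing $\eta$ at the outset so small that $cC(\tau,\omega)\,\eta$ lies below the prescribed threshold then fixes $K$ and $T$ independently of $\alpha$, which is precisely the asserted uniformity.

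The only point requiring genuine care — and the step I expect to be the main obstacle — is this uniformity in $\alpha$: every place where the earlier proofs produced an $\alpha$-dependent coefficient through a power of $z$ must be replaced by an $\alpha$-independent bound \emph{before} $\eta$ is chosen, so that $K$ and $T$ cannot secretly depend on $\alpha$. Once the majorization $e^{-2\alpha\omega(\cdot)}\le e^{2|\omega(\cdot)|}$ and the uniform estimate of Lemma \ref{esal1} are in hand, the remaining computation is a routine repetition of Lemmas \ref{lem4} and \ref{lemu2}, and the details may be omitted.
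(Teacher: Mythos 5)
Your proposal is correct and follows essentially the same route as the paper: the paper's own proof is precisely the observation that one can re-run the tail estimates of Lemmas \ref{lem4} and \ref{lemu2}, using the fact that the constants there are independent of $\alpha$ and majorizing every weight $z^{\pm 2}(\cdot,\omega)=e^{\mp 2\alpha\omega(\cdot)}$ by the $\alpha$-free bound $e^{2|\omega(\cdot)|}$ for $\alpha\in[0,1]$, together with the uniform $\hone$ estimate of Lemma \ref{esal1}. Your explicit insistence that this majorization be performed \emph{before} fixing $K$ and $T$, so that neither can depend on $\alpha$, is exactly the content the paper leaves implicit.
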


We now   prove  
  the precompactness of the union
of  pullback attractors. 

\begin{lem} 
\label{esal3}
 Suppose  \eqref{f1}-\eqref{f4}  and \eqref{gcon2} hold.
Then for every $\tau \in \R$
and $\omega \in \Omega$,
 the union $\bigcup\limits_{0< \alpha \le 1}
\cala_\alpha (\tau, \omega)$
is precompact in $\ltwo$.
\end{lem}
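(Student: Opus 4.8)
The plan is to establish precompactness in $\ltwo$ by the same tail-cutoff plus compact-embedding mechanism used in Lemma \ref{lemu4}, but now with every estimate made uniform in the noise intensity $\alpha\in(0,1]$. Concretely, I take an arbitrary sequence $\{w_n\}_{n=1}^\infty$ drawn from $\bigcup_{0<\alpha\le 1}\cala_\alpha(\tau,\omega)$, so that $w_n\in\cala_{\alpha_n}(\tau,\omega)$ for some $\alpha_n\in(0,1]$, and I aim to extract an $\ltwo$-convergent subsequence. Two uniform ingredients are needed: a uniform $\hone$ bound and a uniform tail bound on the $w_n$.

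The uniform $\hone$ bound is already available: by \eqref{upabs7} every $u\in\cala_\alpha(\tau,\omega)$ with $0<\alpha\le 1$ satisfies $\|u\|_{\hone}\le R(\tau,\omega)$, so in particular $\sup_n\|w_n\|_{\hone}\le R(\tau,\omega)$. For the uniform tail bound I would exploit the invariance of each $\cala_{\alpha_n}$ together with the fact, recorded in \eqref{upabs5a1}, that all the attractors sit inside the single family $B=\{B(\tau,\omega)\}$ defined in \eqref{upabs4}--\eqref{upabs5}. Fixing $t\ge 0$, invariance lets me write $w_n=\Phi_{\alpha_n}(t,\tau-t,\theta_{2,-t}\omega,u_{0,n})$ with $u_{0,n}\in\cala_{\alpha_n}(\tau-t,\theta_{2,-t}\omega)\subseteq B(\tau-t,\theta_{2,-t}\omega)$. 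Since the estimate in Lemma \ref{esal2} holds uniformly for all $\alpha\in[0,1]$ and, as explained below, $B\in\cald$, I can apply it with $D=B$: given $\eta>0$ there exist $T\ge 1$ and $K\ge 1$ (independent of $n$ and of $\alpha_n$) so that, choosing $t=T$, the identification $\Phi_{\alpha_n}(T,\tau-T,\theta_{2,-T}\omega,u_{0,n})=u_{\alpha_n}(\tau,\tau-T,\theta_{2,-\tau}\omega,u_{0,n})$ gives $\int_{|x|\ge K}|w_n(x)|^2\,dx\le\eta$ for every $n$.

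Granting these two uniform bounds, precompactness follows exactly as in Lemma \ref{lemu4}. Writing $Q_K=\{x\in\R^n:|x|\le K\}$, the $\hone$ bound and the compactness of the embedding $H^1(Q_K)\hookrightarrow L^2(Q_K)$ yield a subsequence of $\{w_n\}$ converging in $L^2(Q_K)$, while the uniform tail bound makes the contribution on $\R^n\setminus Q_K$ uniformly smaller than $\eta$; a standard finite $\varepsilon$-net (covering) argument then shows that $\{w_n\}$ is totally bounded, hence precompact, in $\ltwo$.

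The one point that genuinely requires verification is that $B\in\cald$, i.e. that the family in \eqref{upabs4}--\eqref{upabs5} is tempered; this is the main (and essentially the only nontrivial) obstacle, and it is what forces hypothesis \eqref{gcon2} rather than the weaker \eqref{gcon1}, since otherwise Lemma \ref{esal2} could not be invoked with $D=B$. I would check it by the same computation as in Lemma \ref{lemu3}: using \eqref{asome} to absorb the subexponentially growing factor $e^{2|\omega(s)|}$ against $e^{\lambda s}$, and using \eqref{gcon2} to control the forcing term, one confirms that $R(\tau+r,\theta_{2,r}\omega)$ grows subexponentially as $r\to-\infty$, so that $e^{cr}\|B(\tau+r,\theta_{2,r}\omega)\|\to 0$ for every $c>0$. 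With $B\in\cald$ in hand, the uniform-in-$\alpha$ tail estimate of Lemma \ref{esal2} applies to it and the argument closes.
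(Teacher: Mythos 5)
Your proposal is correct and follows essentially the same route as the paper's proof: a uniform tail estimate obtained by applying Lemma \ref{esal2} to the family $B$ of \eqref{upabs4} (using the invariance of $\cala_\alpha$ and the inclusion \eqref{upabs5a1}), combined with the uniform $\hone$ bound \eqref{upabs7} and the compact embedding $H^1(Q_K)\hookrightarrow L^2(Q_K)$, yielding total boundedness of the union in $\ltwo$. Your explicit verification that $B\in\cald$ (temperedness via \eqref{asome} and \eqref{gcon2}) is a step the paper leaves implicit, but it does not change the argument.
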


\begin{proof}
We  only need to show   that,
for every $\varepsilon>0$,  $\tau \in \R$
and $\omega \in \Omega$, 
the set 
$\bigcup\limits_{0< \alpha \le 1}\cala_\alpha (\tau, \omega)$
 has a finite covering of balls of radius less than
$\varepsilon$. 
   Let $B$ be the family of subsets of
   $\ltwo$ given by
   \eqref{upabs4}.
    Then it follows from
   Lemma \ref{esal2}  that  
there exist     $T = T  (\tau, \omega, \varepsilon) \ge 1$ and
$L = L  (\tau, \omega, \varepsilon) \ge 1$ 
such that    for all $t \ge T $
and $\alpha \in (0, 1]$,
\be
 \label{pesal3_1}
\int_{|x| \ge L } 
u_\alpha (\tau, \tau -t, \theta_{2, -\tau} \omega, u_{\alpha, \tau -t} ) (x)|^2  dx 
\le {\frac 12} \varepsilon,
\ee
 where $u_{\alpha, \tau -t}\in B(\tau -t, \theta_{2, -t} \omega)$.
 By \eqref{upabs5a1} and \eqref{pesal3_1},  we get
 from the invariance of $\cala_\alpha$ 
 that, 
   for  each   $\tau \in \R$   and 
   $\omega \in \Omega$,
\be
 \label{pesal3_2}
\int_{|x| \ge L } 
|u (x)|^2  dx 
\le {\frac 12} \varepsilon, \quad \text{   for all }   \   u \in \cala_\alpha(\tau, \omega)
\ \text{ with }  \  0< \alpha \le 1.
\ee
By \eqref{upabs7} we see 
the set  $   \bigcup\limits_{0< \alpha \le 1}
\cala_\alpha (\tau, \omega)$
is bounded in $H^1(Q)$
with $Q=\{x\in \R^n: |x|<L\}$.
Then 
 the compactness of embedding $H^1(Q)
\subseteq  L^2(Q)$ 
implies  that  the set 
$   \bigcup\limits_{0< \alpha \le 1}
\cala_\alpha (\tau, \omega)$
has a finite covering of balls of radii less than
${\frac 1{2}}\varepsilon$ in $L^2(Q)$,
which together with 
\eqref{pesal3_2} 
completes the proof.
\end{proof}

We are now   in a 
position to  present 
the upper semicontinuity 
of pullback attractors for the stochastic 
equation \eqref{rd1}.

\begin{thm} 
\label{upthm1}
 Suppose  \eqref{f1}-\eqref{f4},  \eqref{gcon2}
  and  \eqref{f5} hold.
Then for every $\tau \in \R$
and $\omega \in \Omega$,
 \be\label{upthm1a1}
\lim_{\alpha  \to 0}    { \rm dist}_\ltwo
( \cala_\alpha (\tau, \omega),  \cala_0(\tau)  )
=0.
\ee
 \end{thm}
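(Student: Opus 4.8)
The plan is to deduce the conclusion from the abstract convergence result Theorem~\ref{semcondn}, since the limiting cocycle $\Phi_0$ associated with the deterministic problem \eqref{drd1}-\eqref{drd2} is independent of $\omega \in \Omega$ and is therefore a deterministic non-autonomous cocycle. I take $\Lambda = [0,1]$, $\lambda_0 = 0$, $\Omega_1 = \R$, $\Omega_2 = \Omega$ and $X = \ltwo$, so that $\Phi_\alpha$ plays the role of $\Phi_\lambda$ and $\Phi_0$ that of $\Phi_{\lambda_0}$; the attractor $\cala_0 \in \cald_0$ whose existence is recorded before \eqref{upabs1} serves as $\cala_{\lambda_0}$. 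It then remains to verify the four hypotheses \eqref{sem3} and \eqref{sem1nd_1}-\eqref{sem1nd} of that theorem.

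The precompactness condition \eqref{sem3} is precisely Lemma~\ref{esal3}. For \eqref{sem1nd_1} I set $\overline{R}_0(\tau) = M_0(\tau)$ with $M_0$ as in \eqref{upabs3a1}; the associated ball family is exactly $K_0$ from \eqref{upabs3}, which is a $\cald_0$-pullback absorbing set of $\Phi_0$ and in particular lies in $\cald_0$. For the absorbing-set bound \eqref{sem2nd}, note that $\|K_\alpha(\tau,\omega)\|_{\ltwo} = M_\alpha(\tau,\omega)$ by \eqref{upabs1}-\eqref{upabs2}. Since $e^{-2\alpha\omega(s)} \to 1$ pointwise as $\alpha \to 0$ and $e^{-2\alpha\omega(s)} \le e^{2|\omega(s)|}$ for every $\alpha \in [0,1]$, while the dominating integrand $e^{\lambda s}e^{2|\omega(s)|}\bigl(1 + \|g(s+\tau,\cdot)\|^2\bigr)$ is integrable on $(-\infty,0)$ by \eqref{asome} and \eqref{gcon2} (this is exactly the finiteness that makes $R(\tau,\omega)$ in \eqref{upabs5} well defined), the dominated convergence theorem gives $M_\alpha(\tau,\omega) \to M_0(\tau)$. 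Hence $\limsup_{\alpha \to 0}\|K_\alpha(\tau,\omega)\|_{\ltwo} = M_0(\tau) = \overline{R}_0(\tau)$, which is \eqref{sem2nd}.

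The remaining and most delicate hypothesis is the convergence of cocycles \eqref{sem1nd}. Given $\alpha_n \to 0$ and $x_n \to x$ in $\ltwo$, I must show $\Phi_{\alpha_n}(t,\tau,\omega,x_n) \to \Phi_0(t,\tau,x)$ for each fixed $t \in \R^+$. By the definition \eqref{rdphi} this amounts to $u_{\alpha_n}(t+\tau,\tau,\theta_{2,-\tau}\omega,x_n) \to u(t+\tau,\tau,x)$, and Corollary~\ref{atup2}, applied with initial time $\tau$, horizon $T = t$ and $\omega$ replaced by $\theta_{2,-\tau}\omega$, yields, for each fixed $\varepsilon \in [0,1)$ and all sufficiently large $n$ (so that $\alpha_n \le \alpha_0$),
$$
\|u_{\alpha_n}(t+\tau,\tau,\theta_{2,-\tau}\omega,x_n) - u(t+\tau,\tau,x)\|^2 \le c\, e^{ct}\|x_n - x\|^2 + c\,\varepsilon\, e^{ct}\Bigl(t + \|x\|^2 + \|x_n\|^2 + \int_\tau^{t+\tau}\|g(s,\cdot)\|^2\, ds\Bigr).
$$
Here $\{\|x_n\|\}$ is bounded because $x_n \to x$. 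Letting $n \to \infty$ first bounds the $\limsup$ of the left-hand side by $c\,\varepsilon\, e^{ct}\bigl(t + 2\|x\|^2 + \int_\tau^{t+\tau}\|g(s,\cdot)\|^2\,ds + 1\bigr)$, and then letting $\varepsilon \to 0$ forces this $\limsup$ to vanish. Since $\Phi_0(t,\tau,x) = u(t+\tau,\tau,x)$, this establishes \eqref{sem1nd}.

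With all four hypotheses verified, Theorem~\ref{semcondn} yields \eqref{upthm1a1}. The main obstacle is the double-limit argument in \eqref{sem1nd}: one must exploit the precise structure of the error estimate in Corollary~\ref{atup2}, in which the deviation splits into a term controlled by $\|x_n - x\|$ and a term carrying the small factor $\varepsilon$ with a prefactor that stays bounded along the sequence, and then pass to the limit in $n$ \emph{before} sending $\varepsilon \to 0$. A secondary technical point is the dominated convergence in \eqref{sem2nd}, for which the uniform bound $e^{-2\alpha\omega(s)} \le e^{2|\omega(s)|}$ over $\alpha \in [0,1]$ together with the integrability guaranteed by \eqref{asome} and \eqref{gcon2} is essential.
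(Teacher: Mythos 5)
Your proposal is correct and follows essentially the same route as the paper: the paper likewise verifies \eqref{sem1nd_1}--\eqref{sem1nd} via the absorbing sets $K_\alpha$, $K_0$ and Corollary~\ref{atup2}, verifies \eqref{sem3} via Lemma~\ref{esal3}, and then invokes Theorem~\ref{semcondn}. The only difference is that you spell out details the paper leaves implicit (the dominated convergence argument for $M_\alpha(\tau,\omega)\to M_0(\tau)$ and the double-limit argument extracting \eqref{sem1nd} from Corollary~\ref{atup2}), which is a faithful filling-in rather than a different method.
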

 
 \begin{proof}
 Let $K_\alpha$ and
 $K_0$ be the families of
 subsets of $\ltwo$ given by
 \eqref{upabs1}
 and \eqref{upabs3}, respectively.
 Then we know $K_\alpha$ is a 
 $\cald$-pullback absorbing  set 
 of $\Phi_\alpha$, and $K_0$
 is a $\cald_0$-pullback
 absorbing set of $\Phi_0$
 in $\ltwo$. 
 By \eqref{upabs1}-\eqref{upabs3a1} we have,
 for every $\tau \in \R$   and $\omega \in \Omega$,
 \be\label{pupthm1_1}
 \limsup_{\alpha \to 0}
 \| K_\alpha (\tau, \omega) \|
 = \limsup_{\alpha \to 0}
  M_\alpha(\tau, \omega)  =
  M_0 (\tau)  = \| K_0 (\tau ) \|.
\ee
Take a sequence 
$\alpha_n \to 0$ and $u_{0,n} \to u_0$ in $\ltwo$.
By  Corollary \ref{atup2} we  get,
 for  every $t \in \R^+$,
  $\tau \in \R$  and   $\omega \in \Omega$, 
\be  \label{pupthm1_2}
\Phi_{\alpha_n} (t, \tau,  \omega, u_{0,n}) \to \Phi (t, \tau,  u_0)
\quad \text{ in } \ \ltwo.
\ee
From \eqref{pupthm1_1}-\eqref{pupthm1_2} we see
that $\Phi_\alpha$ and $\Phi_0$
satisfy conditions \eqref{sem1nd_1}-\eqref{sem1nd}.
On the other hand, 
By Lemma \ref{esal3}
we find that $\cala_\alpha$ also satisfies 
\eqref{sem3}.
Thus \eqref{upthm1a1} follows from 
   Theorem  \ref{semcondn} immediately. 
 \end{proof}


\begin{thebibliography}{99}

\bibitem{arn1}
L. Arnold, {\em Random Dynamical Systems}, Springer-Verlag, 1998.

 

\bibitem{bab1}
A.V. Babin and M.I. Vishik,
{\em Attractors of Evolution Equations}, North-Holland,
Amsterdam, 1992.

 


\bibitem{bal2}
J.M.   Ball,  Continuity properties and global
attractors of generalized semiflows and the Navier-Stokes
equations, {\em  J. Nonl. Sci.},  {\bf 7} (1997),
475-502.


\bibitem{bat1}
P.W. Bates,  H. Lisei and  K.  Lu,
 Attractors for stochastic lattice
 dynamical systems,
{\em Stoch. Dyn.},   {\bf 6}  (2006),      1-21.




\bibitem{bat2}
P.W. Bates,   K.  Lu   and B. Wang,
 Random attractors for  stochastic reaction-diffusion equations
on unbounded domains,  {\em J. Differential Equations},
 {\bf  246}   (2009),   845-869.

 
 
 
 
 \bibitem{car1}
 T. Caraballo,  J. A. Langa;
 On the upper semicontinuity  of cocycle attractors
 for non-autonomous  and random
 dynamical systems,
 {\em Dynamics of Continuous, Discrete and Impulsive
 Systems  Series A: Mathematical Analysis},
 {\bf   10} (2003), 491-513.
 
  
  

\bibitem{car2}
T. Caraballo, J. Real, I.D. Chueshov,
 Pullback attractors for stochastic heat
 equations in materials with memory,
  {\em Discrete Continuous Dynamical Systems B},
  {\bf 9 }  (2008),   525-539.
  
  
  \bibitem{car3}
 T.  Caraballo,  M. J.  Garrido-Atienza, 
  B. Schmalfuss,  and J.  Valero,
   Non-autonomous and random attractors for 
   delay random semilinear equations 
   without uniqueness,
   {\em  Discrete Contin. Dyn. Syst.},
   {\bf  21}  (2008),  415-443
   
   
   \bibitem{car4}
 T.   Caraballo, M.J.  Garrido-Atienza,
 B.  Schmalfuss  and
 J. Valero,  Asymptotic behaviour of a stochastic 
 semilinear dissipative functional equation 
 without uniqueness of solutions,
 {\em  Discrete Contin. Dyn. Syst. Ser. B},
 {\bf  14}  (2010),  439-455.
 
 
 \bibitem{car5}
  T. Caraballo, M.J.  Garrido-Atienza
  and T.  Taniguchi,
  The existence and exponential behavior of 
  solutions to stochastic delay evolution 
  equations with a fractional Brownian motion,
  {\em  Nonlinear Anal.},
  {\bf  74}  (2011),  3671-3684.
  
  
%
%
%
%
 \bibitem{carva1}
 A. N. Carvalho   and   J. A. Langa
 Non-autonomous perturbation of autonomous 
 semilinear differential equations:
 continuity of  local stable and unstable manifolds,
 {\em J. Differential Equations}, {\bf 233} (2007),   622-653.



 \bibitem{carva2}
 A. N. Carvalho   and   J. A. Langa,
 An extension of the concept of gradient semigroups
which is stable under perturbation,
{\em J. Differential Equations}, {\bf 246} (2009),   2646-2668.





 

  \bibitem{chu1}
    I. Chueshow, {\em Monotone Random Systems - Theory
    and Applications}, Lecture Notes in Mathematics 1779,
    Springer, Berlin, 2001.
    
    

\bibitem{chu2}
I. Chueshov and M. Scheutzow,
On the structure of attractors  and invariant measures for a class of
monotone  random systems,
{\em Dynamical Systems}, {\bf 19} (2004), 127-144.




    \bibitem{cra1}
    H. Crauel  and
    F. Flandoli,  Attractors for random dynamical systems,
    {\em Probab. Th. Re. Fields}, {\bf 100} (1994), 365-393.
    


    \bibitem{cra2}
    H. Crauel,  A. Debussche and
    F. Flandoli, Random attractors,
    {\em J. Dyn. Diff. Eqns.}, {\bf 9} (1997), 307-341.
    
    
    
 
 
  \bibitem{dua1}
    J. Duan, K. Lu and B. Schmalfuss,
    Invariant manifolds for stochastic partial
    differential equations,
    {\em Ann. Probab.}, {\bf 31} (2003), 2109-2135.
    
    \bibitem{dua2}
    J. Duan, K. Lu and B. Schmalfuss,
    Smooth stable and unstable manifolds
    for stochastic evolutionary equations,
    {\em J. Dynam. Differential Equations},
    {\bf 16} (2004),  949-972.
    
    \bibitem{dua3}
J. Duan and B. Schmalfuss,
 The 3D quasigeostrophic fluid dynamics
  under random forcing on boundary,
  {\em Comm. Math. Sci.},
  {\bf 1} (2003),  133-151.
    
 


     \bibitem{fla1}
    F. Flandoli and B. Schmalfuss,
Random attractors for
the 3D stochastic Navier-Stokes equation with multiplicative
noise,  {\em Stoch. Stoch. Rep.}, {\bf 59} (1996),  21-45.


\bibitem{gar1}
M.J. Garrido-Atienza and
B.  Schmalfuss,
Ergodicity of the infinite dimensional fractional Brownian motion,
{\em  J. Dynam. Differential Equations},
{\em  23}  (2011), 671-681.


\bibitem{gar2}
M.J. Garrido-Atienza,
A.  Ogrowsky  and B.  Schmalfuss,
 Random differential equations with 
 random delays,
 {\em  Stoch. Dyn.},
 {\bf  11}  (2011), 369-388.


 \bibitem{hal1}
  J.K.  Hale,   Asymptotic Behavior of
Dissipative Systems,
American Mathematical Society,
  Providence, RI, 1988.

     
 
\bibitem{hal2} J. K.  Hale, X. Lin   and G. Raugel, 
 Upper semicontinuity of attractors for approximations
of semigroups  and PDE's,
{\em Math. Comp.},  { \bf 50} (1988), 89-123.

\bibitem{hal3} J. K.  Hale  and  G. Raugel,
Upper semicontinuity of the attractor for a singularly perturbed
 hyperbolic equation,
 {\em J. Differential Equations}, {\bf 73} (1988), 197-214.

  


\bibitem{huang1}
J. Huang and W. Shen,
Pullback attractors for nonautonomous and random 
parabolic equations on non-smooth domains,
{\em Discrete and Continuous Dynamical Systems},
{\bf 24} (2009),   855-882.





\bibitem{kloe1}
P.E. Kloeden and J.A. Langa,
 Flattening, squeezing and the existence of
random attractors,
{\em Proc. Royal Soc. London Serie A.}, {\bf 463}  (2007), 163-181.




 \bibitem{lan1}
    J.  A.  Langa 
    and  A.  Suarez, 
    Pullback permanence for 
    non-autonomous partial differential 
    equations,
    {\em Electronic J. Differential Equations},
     {\bf 2002} (2002), 
     No. 72,  1-20. 
     
     
      \bibitem{lia1}
    Z.  Lian  and K. Lu,
    Lyapunov exponents and
    invariant manifolds for infinite-dimensional
    random dynamical systems
    in a Banach space,
    {\em  Mem. Amer. Math. Soc.},
    {\bf 206} (2010), No. 967, 1-106.
    
    
 \bibitem{lv1} Y. Lv  and  W. Wang, 
 Limiting dynamics for stochastic wave equations,
 {\em J. Differential Equations}, 
 {\bf 244} (2008), 1-23.
 
   
 \bibitem{moh1}
    S.-E. A. Mohammed, T. Zhang   and H. Zhao,
    The stable manifold theorem 
    for semilinear stochastic evolution equations
    and stochastic partial differential equations,
    {\em Mem. Amer. Math. Soc.},
    {\bf 196} (2008), No. 917, 1-105.
    
 


\bibitem{schm1}
B.  Schmalfuss, Backward cocycles  and attractors  
of stochastic differential equations,
{\em International Seminar on Applied Mathematics-Nonlinear 
Dynamics: Attractor Approximation and Global Behavior},  1992,  185-192.


   
  \bibitem{sel1}
 R. Sell and  Y. You,
 Dynamics of Evolutionary Equations,
 Springer-Verlag,
New York, 2002.

 
     
         

\bibitem{tem1}
 R.  Temam,    Infinite-Dimensional Dynamical
Systems in Mechanics and Physics,
  Springer-Verlag,
  New York, 1997.


  

 \bibitem{wan2}
 B.  Wang,
Asymptotic behavior of stochastic wave equations with critical
exponents on $\R^3$,  
{\em  Transactions of  American Mathematical Society}, 
{\bf 363} (2011), 3639-3663.
 
%
%


      
      
\bibitem{wan4}
      B. Wang,
      Sufficient and necessary criteria for
      existence of pullback attractors for
      non-compact random dynamical systems,
     {\em  arXiv:1202.2390v2 [math.AP]}
      (to appear in J. Differential Equations).
      
 \bibitem{wan5}
 B. Wang, Upper semicontinuity of random attractors
 for non-compact random dynamical
 systems, {\em Electronic  Journal of Differential Equations},
 {\bf 2009} (2009), No. 139, 1-18.
      
   
 \bibitem{wan6}
      B. Wang,
      Pullback attractors for   non-autonomous
      reaction-diffusion equations on  ${\mathbb R}^n$,
      {\em Frontiers of Mathematics in China},  {\bf 4} (2009),
      563-583.

 
     

 \end{thebibliography}
\end{document}